\newtheorem{thm}{Theorem}[section]
\newtheorem{cor}[thm]{Corollary}
\newtheorem{lem}[thm]{Lemma}
\newtheorem{prop}[thm]{Proposition}
\theoremstyle{definition}
\newtheorem{defn}[thm]{Definition}
\theoremstyle{remark}
\newtheorem{rem}[thm]{Remark}
\numberwithin{equation}{section}
\newtheorem{example}[thm]{\bf Example}
\newcommand{\set}[1]{\left\{#1\right\}}
\newcommand{\paren}[1]{\left(#1\right)}
\begin{document}

\title[Properties of finite dual fusion frames]{Properties of finite dual fusion frames}

\author[Sigrid B. Heineken]{Sigrid B. Heineken$^{1,*}$}%

\author[Patricia M. Morillas]{Patricia M. Morillas$^2$\\\\ $^{1}$\textit{D\lowercase{epartamento de} M\lowercase{atem\'atica}, FCE\lowercase{y}N, U\lowercase{niversidad de }B\lowercase{uenos} A\lowercase{ires}, P\lowercase{abell\'on} I, C\lowercase{iudad }U\lowercase{niversitaria}, IMAS, UBA-CONICET, C1428EGA C.A.B.A., A\lowercase{rgentina}\\ $^2$ I\lowercase{nstituto de }M\lowercase{atem\'{a}tica }A\lowercase{plicada} S\lowercase{an} L\lowercase{uis, }UNSL-CONICET \lowercase{and} D\lowercase{epartamento de} M\lowercase{atem\'{a}tica}, FCFM\lowercase{y}N, UNSL, E\lowercase{j\'{e}rcito de los }A\lowercase{ndes 950, 5700 }S\lowercase{an} L\lowercase{uis,} A\lowercase{rgentina}}}
%


\thanks{* Corresponding author.\\
\textit{E-mail addresses:}  sheinek@dm.uba.ar (S. B. Heineken),
morillas@unsl.edu.ar (P. M. Morillas)}

\begin{abstract}
A new notion of dual fusion frame has been recently introduced by
the authors. In this article that notion is further motivated and it
is shown that it is suitable to deal with questions posed in a
finite-dimensional real or complex Hilbert space, reinforcing the
idea that this concept of duality solves the question about an
appropriate definition of dual fusion frames. It is shown that for
overcomplete fusion frames there always exist duals different from
the canonical one. Conditions that assure the uniqueness of duals
are given. The relation of dual fusion frame systems with dual
frames and dual projective reconstruction systems is established.
Optimal dual fusion frames for the reconstruction in case of
erasures of subspaces, and optimal dual fusion frame systems for the
reconstruction in case of erasures of local frame vectors are
determined. Examples that illustrate the obtained results are
exhibited.

\bigskip

\bigskip

{\bf Key words:} Frames, Fusion frames, Dual fusion frames,
G-frames, Projective reconstruction systems, Erasures, Optimal dual
fusion frames.

\medskip

{\bf AMS subject classification:} Primary 42C15; Secondary 42C40,
46C05, 47B10.

\end{abstract}

\maketitle

\section{Introduction}

A \textit{frame} \cite{Casazza (2000), {Casazza-Kutyniok (2012)},
Christensen (2003), Kovacevic-Chebira (2008)} for a separable
Hilbert space $\mathcal{H}$ is a family of vectors in $\mathcal{H}$
which allow stable and not necessarily unique representations of the
elements of $\mathcal{H}$ via the so-called \emph{dual frames}.
Frames are useful in areas such as signal processing, coding theory,
communication theory and sampling theory, among others.

In  many applications such as distributing sensing, parallel
processing and packet encoding, a distributed processing by
combining locally data vectors has to be implemented. \textit{Fusion
frames} (or \textit{frames of subspaces}) \cite{Casazza-Kutyniok
(2004), Casazza-Kutyniok-Li (2008)} (see also \cite[Chapter
13]{Casazza-Kutyniok (2012)}) are a generalization of frames and
provide a mathematical framework suitable for these applications.
They are collections of closed subspaces and weights, and permit the
reconstruction of each element of $\mathcal{H}$ from packets of
coefficients.

\subsection{Duality in fusion frames.}
Given a frame, the set of dual frames plays a crucial role in
designing suitable reconstruction strategies. In the attempt to
define dual fusion frames appears a technical difficulty related to
the domain of the synthesis operator. A new concept of dual fusion
frame has been proposed by the first author of this paper, which
extends the ``canonical" notion used so far and overcomes this
technical difficulty.

In \cite{Heineken-Morillas-Benavente-Zakowicz (2012)} properties and
examples in infinite-dimensional separable Hilbert spaces are
provided. There the focus is set on questions related to the
boundedness of the operators involved in the definition of duality,
and examples of dual fusion frames are given in $L^2(\mathbb{R}).$

In the present paper we consider instead the finite-dimensional case
studying aspects not addressed in
\cite{Heineken-Morillas-Benavente-Zakowicz (2012)}. In applications,
finite-dimensional Hilbert spaces and finite fusion frames play a
main role \cite{Casazza-Kutyniok (2012)}. They avoid the
approximation problems related to the truncation needed in the
infinite-dimensional case. It is worth to mention that there are
questions which only make sense in the finite-dimensional situation.
This is the case for example for the study of optimal
reconstructions under erasures  (see, e. g., \cite{Casazza-Kutyniok
(2008)}), that is considered in the present paper.

\subsection{Previous approaches.} Other approaches can be considered to study duality of fusion
frames. One of them are the alternate dual fusion frames introduced
in \cite{Gavruta (2007)}. We show that the reconstruction formula
provided by these duals can be obtained using the new concept. One
advantage of the new dual fusion frames with respect to alternate
dual frames is that they can be easily obtained from the left
inverses of the analysis operator of the fusion frames, or from dual
frames.

Fusion frames can be viewed as a particular case of g-frames
\cite{Sun (2006)}, so one attempt could be to study duality of
fusion frames in the context of dual g-frames. For example, in
\cite{Arefijamaal-Ghasemi (2013)} dual g-frames with respect to the
same family of subspaces are considered, but this would have no
sense applied to the study of duality of fusion frames.
Reconstruction systems are g-frames in finite-dimensional Hilbert
spaces. In this setting, duality of fusion frames was studied
viewing them as projective reconstruction systems
\cite{Massey-Ruiz-Stojanoff (2012a), Massey-Ruiz-Stojanoff (2012b)}.
But projective reconstruction systems are not closed under duality,
more precisely, there exist projective reconstruction systems with
non projective canonical dual or without any projective dual
\cite{Massey-Ruiz-Stojanoff (2012b)}. This drawback also appears in
the setting considered in \cite{Krishtal (2011)}. We note that these
problems are not present if we use the new definition of dual fusion
frames.

\subsection{Optimal reconstruction under erasures.} In real implementations often some of the data vectors, or part of
them, are lost or erased, and it is necessary to perform the
reconstruction with the partial information at hand.

One approach to address this situation is to derive sufficient
conditions for a fusion frame to be robust to such erasures, and
construct fusion frames that are optimally robust. Here robustness
is understood as a certain minimizing reconstruction error property.
This approach is considered, e. g., in \cite{Casazza-Kutyniok
(2008)} for tight fusion frames using the canonical dual for the
reconstruction.

In applications there might be several restrictions when selecting
fusion frames for encoding, that make it impossible to find one that
is optimally robust. The new concept of dual fusion frames allows
another approach, studying how to select optimal dual fusion frames
for a fixed fusion frame. In particular, in this article we analyze
this question when a blind reconstruction process is used, in a
similar way as it was done in \cite{Lopez-Han (2010), Leng-Han
(2011)} for frames and in \cite{Massey-Ruiz-Stojanoff (2012b)} for
projective reconstruction systems. As in these works,
 we obtain, under certain conditions, a unique optimal dual fusion frame of
a given fusion frame. We note that in \cite{Massey-Ruiz-Stojanoff
(2012b)}, it is shown that the optimal dual reconstruction system is
not necessarily projective, so it can not always be viewed as a
fusion frame.

\subsection{Contents.} In Section 2, we briefly
review frames, fusion frames and fusion frame systems.

In Section 3  we present the new concept of dual fusion frame. Then
we consider two special cases: block-diagonal and component
preserving duals, for which the reconstruction formula has a simpler
expression. We present a characterization of component preserving
dual fusion frames in terms of the left inverses of the analysis
operator of the original fusion frame. We then refer to the duals
defined in \cite{Casazza-Kutyniok (2004)}. These duals are component
preserving and we call them canonical. We prove that for
overcomplete fusion frames with non trivial subspaces, there always
exist component preserving dual fusion frames different from the
canonical ones. The new definition of dual fusion frames is a
generalization of conventional dual frames and it provides more
flexibility. For instance, a Riesz fusion basis can have only one
component preserving dual but we show that it can have more than one
non component dual, unless additional conditions are imposed.

In Section 4,  we introduce a linear transformation that links the
analysis operator of a fusion frame system with the analysis
operator of its associated frame. Using this transformation, we
define dual fusion frame systems, which are block-diagonal. We
establish the close relation of dual fusion frame systems with dual
frames and dual projective reconstruction systems, showing that the
new definition of dual fusion frames arises naturally.

In Section~\ref{S ODFFE}, we determine the duals that minimize the
mean square error and the worst case error in the presence of
erasures when a blind reconstruction process is used. In both cases,
we determine optimal dual fusion frames for the reconstruction in
case of erasures of subspaces and optimal dual fusion frame systems
for the reconstruction in case of erasures of local frame vectors.

Finally, in Section 6, we show that the reconstruction formula
provided by the alternate dual fusion frames introduced in
\cite{Gavruta (2007)} can be obtained using the new concept of dual
fusion frame. We also present examples that illustrate the results
described before.


\section{Preliminaries}

In this section we review the concepts of frame \cite{Casazza
(2000), Casazza-Kutyniok (2012), Christensen (2003),
Kovacevic-Chebira (2008)}, fusion frame and fusion frame system
\cite{Casazza-Kutyniok (2004), Casazza-Kutyniok-Li (2008)} (see also
\cite[Chapter 13]{Casazza-Kutyniok (2012)}). We refer to the
mentioned works for more details. We begin introducing some
notation.

\subsection{Notation}

Let $\mathcal{H}, \mathcal{K}$ be finite-dimensional Hilbert spaces
over $\mathbb{F}=\mathbb{R}$ or $\mathbb{F}=\mathbb{C}$. Let
$L(\mathcal{H},\mathcal{K})$ be the space of linear transformations
from $\mathcal{H}$ to $\mathcal{K}$ (we write $L(\mathcal{H})$ for
$L(\mathcal{H},\mathcal{H})$). Given $T \in
L(\mathcal{H},\mathcal{K})$ we write $R(T)$, $N(T)$ and $T^{*}$ to
denote the image, the null space and the adjoint of $T$,
respectively. If $T \in L(\mathcal{H},\mathcal{K})$ is injective,
$\mathfrak{L}_{T}$ denotes the set of left inverses of $T$.

The inner product and the norm in $\mathcal{H}$ will be denoted by
$\langle.,.\rangle_{\mathcal{H}}$ and $\|.\|_{\mathcal{H}}$,
respectively. If $T \in L(\mathcal{H},\mathcal{K})$, then
$\|T\|_{F}$ and $\|T\|_{sp}$ denote the Frobenius and the spectral
norms of $T$, respectively.

If $V \subset \mathcal{H}$ is a subspace, $\pi_{V} \in
L(\mathcal{H})$ denotes the orthogonal projection onto $V$.

Let $m,n,d \in \mathbb{N}$ and $\mathbf{n}=(n_{1},\dots,n_{m}) \in
\mathbb{N}^{m}$. In the sequel, $\mathcal{H}$ will be a
finite-dimensional Hilbert space over $\mathbb{F}$ of dimension $d$.
For $J \subseteq \{1, \ldots, m\}$ let $\chi_{J} : \{1, \ldots, m\}
\rightarrow \{0, 1\}$ be the characteristic function of $J.$ We
abbreviate $\chi_{\{j\}}=\chi_{j}$. For
$p\in\mathbb{N}\cup\{\infty\}$ let $\|.\|_{p}$ denote the $p$-norm
in $\mathbb{F}^{n}$.

\subsection{Frames}

\begin{defn}\label{D F}
Let $\mathcal{F}=\{f_{i}\}_{i=1}^{m} \subset \mathcal{H}$.
\begin{enumerate}
  \item The \textit{synthesis operator} of $\mathcal{F}$ is

\centerline{ $T_{\mathcal{F}} : \mathbb{F}^{m} \rightarrow
\mathcal{H},$
$T_{\mathcal{F}}(x_{i})_{i=1}^{m}=\sum_{i=1}^{m}x_{i}f_{i}$}
\noindent and the \textit{analysis operator} is

\centerline{$T_{\mathcal{F}}^{*} : \mathcal{H} \rightarrow
\mathbb{F}^{m},$ $ T_{\mathcal{F}}^{*}f=(\langle
f,f_{i}\rangle)_{i=1}^{m}.$}
  \item $\mathcal{F}$ is a \textit{frame} for $\mathcal{H}$ if
$\text{span}~\mathcal{F}=\mathcal{H}$.
  \item If $\mathcal{F}$ is a frame for $\mathcal{H},$

\centerline{ $S_{\mathcal{F}}=T_{\mathcal{F}}T_{\mathcal{F}}^{*},$
$S_{\mathcal{F}}f=\sum_{i=1}^{m}\langle f,f_{i}\rangle f_{i},$}
\noindent is the \textit{frame operator} of $\mathcal{F}$.
\end{enumerate}
\end{defn}

The set $\mathcal{F}=\{f_{i}\}_{i=1}^{m} \subset \mathcal{H}$ is a
frame for $\mathcal{H}$ if and only if there exist $\alpha, \beta >
0$ such that
\begin{equation}\label{E cond f}
\alpha\|f\|^{2} \leq \sum_{i=1}^{m}|\langle f,f_{i}\rangle |^{2}
\leq \beta\|f\|^{2} \text{ for all $f \in \mathcal{H}$}.
\end{equation}
We call $\alpha$ and $\beta$ the \textit{frame bounds}. The
\textit{optimal lower frame bound} is
$\|S_{\mathcal{F}}^{-1}\|^{-1}$ and the \textit{optimal upper frame
bound} is $\|S_{\mathcal{F}}\|=\|T_{\mathcal{F}}\|^2.$ The set
$\mathcal{F}$ is an $\alpha$-\textit{tight frame}, if in (\ref{E
cond f}) the constants $\alpha$ and $\beta$ can be chosen so that
$\alpha = \beta$, or equivalently, $S_{\mathcal{F}}=\alpha
I_{\mathcal{H}}$. If $\alpha = \beta = 1$, $\mathcal{F}$ is a
\textit{Parseval frame}.

In frame theory each $f \in \mathcal{H}$ is represented by the
collection of scalar coefficients $\langle f,f_{i}\rangle$, $i = 1,
\ldots, m$, that can be thought as a measure of the projection of
$f$ onto each frame vector. From these coefficients $f$ can be
recovered using a reconstruction formula via the so-called dual
frames.
\begin{defn}\label{D frame dual}
Let $\mathcal{F}=\{f_{i}\}_{i=1}^{m}$ and
$\widetilde{\mathcal{F}}=\{\tilde{f}_{i}\}_{i=1}^{m}$ be frames for
$\mathcal{H}$. Then $\widetilde{\mathcal{F}}$ is a \textit{dual
frame} of $\mathcal{F}$ if the following reconstruction formula
holds \begin{equation}\label{E vectorial D frame dual}
f=\sum_{i=1}^{m}\langle f,f_{i}\rangle \widetilde{f}_{i}, \text{ for
all $f \in \mathcal{H}$},\end{equation}or equivalently,
\begin{equation}\label{E operadores D
frame dual}
T_{\widetilde{\mathcal{F}}}T_{\mathcal{F}}^{*}=I_{\mathcal{H}}.\end{equation}
\end{defn}
Let $\mathcal{F}=\{f_{i}\}_{i=1}^{m}$ be a frame for $\mathcal{H}$.
Then $\{S_{\mathcal{F}}^{-1}f_{i}\}_{i=1}^{m}$ is the
\textit{canonical dual frame} of $\mathcal{F}$.

\subsection{Fusion frames and fusion frame systems}

Fusion frames generalize the concept of frames. The representation
of each $f \in \mathcal{H}$ via fusion frames is given by
projections onto multidimensional subspaces, which also satisfy some
stability conditions.
\begin{defn}
Let $\{W_i\}_{i=1}^{m}$ be a family of subspaces of $\mathcal{H}$,
and let $\{w_{i}\}_{i=1}^{m}$ be a family of weights, i.e., $w_{i}
> 0$ for $i=1,\ldots,m$. Then $\{(W_i,w_{i}\}_{i=1}^{m}$ is
called a \textit{Bessel fusion sequence} for $\mathcal{H}$.
\end{defn}

We will denote $\{W_{i}\}_{i=1}^{m}$ with ${\bf W}$,
$\{w_{i}\}_{i=1}^{m}$ with ${\bf w}$ and $\{(W_i,w_{i})\}_{i=1}^{m}$
with $(\textbf{W},{\bf w})$. If $T \in L(\mathcal{H},\mathcal{K})$
we will write $(T\textbf{W},{\bf w})$ for $\{(TW_i,
w_{i})\}_{i=1}^{m}.$

Let $\mathcal{W}:=\bigoplus_{i=1}^{m} W_i = \{(f_{i})_{i=1}^{m}:
f_{i} \in W_i\}$ be the Hilbert space with
$\langle(f_{i})_{i=1}^{m},(g_{i})_{i=1}^{m}\rangle_{
\mathcal{W}}=\sum_{i=1}^{m}\langle f_{i}, g_{i}\rangle.$

\begin{defn}
Let $(\textbf{W},{\bf w})$ be a Bessel fusion sequence.
\begin{enumerate}
  \item $(\textbf{W},{\bf w})$ is called
$w$-\textit{uniform}, if $w_{i}=w$ for all $i \in \{1, \ldots, m\}$.
In this case we write $(\textbf{W},w)$.
  \item $(\textbf{W},{\bf w})$ is called
$n$-\textit{equi-dimensional}, if $\dim(W_i)=n$ for all $i \in \{1,
\ldots, m\}$.
  \item  The
\textit{synthesis operator} of $(\textbf{W},{\bf w})$ is

\centerline{ $T_{\textbf{W},{\bf w}} : \mathcal{W} \rightarrow
\mathcal{H}$, $T_{\textbf{W},{\bf
w}}(f_i)_{i=1}^{m}=\sum_{i=1}^{m}w_{i}f_{i}.$}

\noindent The \textit{analysis operator} is

\centerline{$T_{\textbf{W},{\bf w}}^{*} : \mathcal{H} \rightarrow
\mathcal{W}$, $T_{\textbf{W},{\bf
w}}^{*}f=(w_{i}\pi_{W_i}(f))_{i=1}^{m}.$}
  \item $(\textbf{W},{\bf w})$ is called a
  \textit{fusion frame} for $\mathcal{H}$ if
$\text{span}\bigcup_{i=1}^{m}W_i=\mathcal{H}$.
  \item $(\textbf{W},{\bf w})$ is a \textit{Riesz
fusion basis} if $\mathcal{H}$ is the direct sum of the $W_i$, and
$(\textbf{W},1)$ an \textit{orthonormal fusion basis} if
$\mathcal{H}$ is the orthogonal sum of the $W_i.$
  \item If $(\textbf{W},{\bf w})$  is a fusion frame for $\mathcal{H}$, the
operator

\centerline{
$S_{\textbf{W},\mathbf{w}}=T_{\textbf{W},\mathbf{w}}T_{\textbf{W},\mathbf{w}}^{*}
: \mathcal{H} \rightarrow \mathcal{H}\text{ ,
}S_{\textbf{W},\mathbf{w}}(f)=T_{\textbf{W},\mathbf{w}}T_{\textbf{W},\mathbf{w}}^{*}(f)=\sum_{i=1}^{m}w_{i}^{2}\pi_{W_i}(f)$}

\noindent is called the \textit{fusion frame operator} of
$(\textbf{W},\mathbf{w})$.
\end{enumerate}
\end{defn}

A Bessel fusion sequence $(\textbf{W},{\bf w})$ is a fusion frame
for $\mathcal{H}$ if and only if $T_{\textbf{W},{\bf w}}$ is onto,
or equivalently, if and only if there exist constants $0 < \alpha
\leq \beta < \infty$ such that
\begin{equation}\label{E cond ff}
\alpha\|f\|^{2} \leq \sum_{i=1}^{m}w_{i}^{2}\|\pi_{W_i}(f)\|^{2}
\leq \beta\|f\|^{2}  \text{ for all $f \in \mathcal{H}$.}
\end{equation}
We call $\alpha$ and $\beta$ the \textit{fusion frame bounds}. A
fusion frame $(\textbf{W},{\bf w})$ is called an
$\alpha$-\textit{tight fusion frame} if in (\ref{E cond ff}) the
constants $\alpha$ and $\beta$ can be chosen so that $\alpha =
\beta$, or equivalently, $S_{\textbf{W},{\bf w}}=\alpha
I_{\mathcal{H}}$. If $\alpha = \beta = 1$ we say that it is a
\textit{Parseval fusion frame}.

The use of fusion frames permits furthermore local processing in
each of the subspaces. For this, it is useful to have a set of local
frames for its subspaces:

\begin{defn}
Let $(\textbf{W},{\bf w})$ be a fusion frame for $\mathcal{H}$, and
let $\{f_i^l\}_{l\in L_i}$ be a frame for $W_{i}$ for
$i=1,\ldots,m$. Then we call $\{(W_i,w_i,\{f_i^l\}_{l\in
L_i})\}_{i=1}^{m}$ a \emph{fusion frame system} for $\mathcal{H}$.
\end{defn}

From now on we denote $\mathcal{F}_{i}=\{f_i^l\}_{l \in L_i}$,
$\mathcal{F}=\{\mathcal{F}_{i}\}_{i=1}^{m},$ ${\bf
w}\mathcal{F}=\{w_{i}\mathcal{F}_{i}\}_{i=1}^{m},$ and we will
abbreviate $\{(W_i,w_i,\{f_i^l\}_{l\in L_i})\}_{i=1}^{m}$ with
$(\textbf{W}, {\bf w}, \mathcal{F}).$  If $T \in
L(\mathcal{H},\mathcal{K})$ we use the notation $T\mathcal{F}$ for
$\{\{T{f}_i^l\}_{l\in L_i}\}_{i=1}^m.$

\begin{rem}\label{R wF marco sii WwF fusion frame system} Clearly,
$\mathbf{w}\mathcal{F}$ is a frame for $\mathcal{H}$ if and only if
$(\mathcal{W},\mathbf{w},\mathcal{F})$ is a fusion frame system for
$\mathcal{H}$.
\end{rem}

\section{Dual fusion frames}

One of the most important properties of frames is that they permit
different representations for each element of $\mathcal{H},$ which
are provided by the duals via the reconstruction formula (\ref{E
vectorial D frame dual}). Taking this into account, our purpose is
to have a notion of dual fusion frame as we have it in the classical
frame theory, and that furthermore leads to analogous results. We
note that for frames the duality condition can be expressed in two
forms: (\ref{E vectorial D frame dual}) and (\ref{E operadores D
frame dual}). So, it is natural to try to generalize these
expressions to the context of fusion frames in order to obtain a
definition of dual fusion frame.

Let $(\textbf{W},{\bf w})$ be a fusion frame. Since
$S^{-1}_{\textbf{W},{\bf w}}S_{\textbf{W},{\bf w}}=I_{\mathcal{H}}$,
we have the following reconstruction formula
\begin{equation}\label{E reconst dual fusion frame canonico}
f=\sum_{i=1}^{m}w_{i}^{2}S_{\textbf{W},{\bf w}}^{-1}\pi_{W_{i}}(f)
\text{, for all } f \in \mathcal{H},
\end{equation} \noindent that is analogous to (\ref{E vectorial D frame dual}). The family $(S_{\textbf{W},{\bf w}}^{-1}\mathbf{W},\mathbf{w})$ is a
fusion frame which in \cite[Definition 3.19]{Casazza-Kutyniok
(2004)} is called the dual fusion frame of $(\textbf{W},{\bf w}),$
and is similar to the canonical dual frame in the classical frame
theory. As it is pointed out in  \cite[Chapter 13]{Casazza-Kutyniok
(2012)}, (\ref{E reconst dual fusion frame canonico}) - in contrast
to the analogous one for frames - does not lead automatically to a
dual fusion frame concept.

So, instead of trying to generalize (\ref{E vectorial D frame
dual}), we can try with (\ref{E operadores D frame dual}). But in
this case we find the following obstacle. Given $(\textbf{W},{\bf
w})$ and $({\bf V},{\bf v})$  two fusion frames for $\mathcal{H}$,
with ${\bf W} \neq {\bf V}$, the corresponding synthesis operators
$T_{\textbf{W},{\bf w}}$ and $T_{{\bf V},{\bf v}}$ have different
domains. Therefore the composition of $T_{{\bf V},{\bf v}}$ with
$T^{*}_{\textbf{W},{\bf w}}$ is not possible. The next definition
overcomes this problem, extends the notion introduced in
\cite{Casazza-Kutyniok (2004)} (see subsection~\ref{Ej dual
canonico}) and, as we are going to see, leads to the properties that
we would desire a dual fusion frame to have.

\begin{defn}\label{D fusion frame dual}
Let $(\textbf{W},{\bf w})$ and $({\bf V},{\bf v})$ be two fusion
frames for $\mathcal{H}$. Then $({\bf V},{\bf v})$ is a dual fusion
frame of $(\textbf{W},{\bf w})$ if there exists a $Q \in
L(\mathcal{W}, \mathcal{V})$ such that
\begin{equation}\label{E TvQTw*=I}
T_{{\bf V},{\bf v}}QT^{*}_{\textbf{W},{\bf w}}=I_{\mathcal{H}}.
\end{equation}
\end{defn}

If we need to do an explicit reference to the linear transformation
$Q$ we say that $({\bf V},{\bf v})$ is a $Q$-dual fusion frame of
$(\textbf{W},{\bf w})$.  Note that if $({\bf V},{\bf v})$ is a
$Q$-dual fusion frame of $(\textbf{W},{\bf w})$, then $({\bf W},{\bf
w})$ is a dual $Q^{*}$-dual fusion frame of $(\textbf{V},{\bf v}).$

Note that in (\ref{E operadores D frame dual}) the operator
``between" $T_{\widetilde{\mathcal{F}}}$ and $T_{\mathcal{F}}^{*}$
is $I_{\mathbb{F}^m},$ which is hidden. In view of this, (\ref{E
TvQTw*=I}) can be seen as a generalization of (\ref{E operadores D
frame dual}).

Now we introduce two particular types of linear transformations $Q$
for which the reconstruction formula obtained from (\ref{E
TvQTw*=I}) is simpler. For this, we consider the selfadjoint
operator $M_{J,\textbf{W}} : \mathcal{W} \rightarrow \mathcal{W},
M_{J,\textbf{W}}(f_j)_{j=1}^{m}=(\chi_{J}(j)f_{j})_{j=1}^{m}.$ We
simply write $M_{J}$ if it clear to which $\textbf{W}$ we refer to.
We abbreviate $M_{\{j\},\textbf{W}}=M_{j,\textbf{W}}$ and
$M_{\{j\}}=M_{j}$.

\begin{defn}Let $Q \in L(\mathcal{W}, \mathcal{V})$.
\begin{enumerate}
\item If $QM_{j,\textbf{W}}\mathcal{W}
\subseteq M_{j, {\bf V}}\mathcal{V}$ for each $j \in \{1, \ldots,
m\},$ $Q$ is called \emph{block-diagonal}.

\item If $QM_{j,\textbf{W}}\mathcal{W}
= M_{j, {\bf V}}\mathcal{V}$ for each $j \in \{1, \ldots, m\},$ $Q$
is called {\em component preserving}.
\end{enumerate}
\end{defn}

Observe that $Q$ is block-diagonal if and only if
$QM_{J,\textbf{W}}=M_{J,\mathbf{V}}Q$ for each $J \subseteq \{1,
\ldots, m\}$, or equivalently, $QM_{j,\textbf{W}}=M_{j, {\bf V}}Q$
for each $j \in \{1, \ldots, m\}$. If $Q$ is block-diagonal, then
$Q^{*}$ is block-diagonal. If in Definition~\ref{D fusion frame
dual} $Q$ is block-diagonal (component preserving) we say that
$({\bf V},{\bf v})$ is a \emph{block-diagonal dual fusion frame}
(\emph{component preserving dual fusion frame}) of $(\textbf{W},{\bf
w})$.

It is important to note, as we will see in Theorem~\ref{T V,v dual
fusion frame sii Vi=ApiWj}, that $Q$ is component preserving for
dual fusion frames obtained from the left inverses of
$T^{*}_{\textbf{W},{\bf w}}.$ Also, $Q$ is block-diagonal for dual
fusion frame systems (see Definition~\ref{D fusion frame system
dual} and Remark~\ref{R Q Mi sum Wj subset Mi sum Vj}).

The reconstruction formula following from (\ref{E TvQTw*=I}) has the
form \begin{equation}\label{E fc} f
=\sum_{j=1}^{m}v_{j}(Q(w_{i}\pi_{W_{i}}f)_{i=1}^{m})_{j}~,~~\forall
f \in \mathcal{H}.
\end{equation}
If $Q$ is block-diagonal (\ref{E fc}) becomes
\begin{equation}\label{E fcs} f
=\sum_{j=1}^{m}v_{j}w_{j}Q_{j}f~,~~\forall f \in
\mathcal{H},\end{equation} where
$Q_{j}f:=(QM_{j}(\pi_{W_{i}}f)_{i=1}^{m})_{j} \in V_{j}$.

The main advantage of (\ref{E fcs}) over (\ref{E fc}) is that in
(\ref{E fcs}) the $j$-th term is obtained using only the projection
onto $W_j$, whereas in (\ref{E fc}) all the projections onto $W_i$
for $i=1,\ldots, m$ are involved. This fact is particulary useful
for truncation purposes. In this case we can consider an index
subset $J$ in (\ref{E fcs}), to obtain an approximate reconstruction
formula where only the subspaces $W_j$ (and $V_j$) for $j \in J$ are
used.

The linear transformation $Q$ has in many cases a very simple
expression and consequently the reconstruction formula is very
simple too (see examples in subsection~\ref{Ej dual canonico} and
Section~\ref{SEjemplos}).

Now we are going to relate the duals of a fusion frame with the left
inverses of its analysis operator, in a similar fashion as for
frames (see, e. g., \cite[Lemma 5.6.3.]{Christensen (2003)}). For
this, given $A \in L(\mathcal{W}, \mathcal{H})$ and $\bf v$ a
collection of weights, we consider the subspaces
$V_{i}=AM_{i}\mathcal{W}$, for each $i=1,\ldots,m,$ and the linear
transformation $$Q_{A,{\bf v}}: \mathcal{W} \rightarrow \mathcal{V}
\text{ , }Q_{A,{\bf
v}}(f_j)_{j=1}^{m}=\paren{\frac{1}{v_{i}}AM_{i}(f_j)_{j=1}^{m}}_{i=1}^{m}.$$
Its adjoint is $Q_{A,{\bf v}}^{*}: \mathcal{V} \rightarrow
\mathcal{W},$ $Q_{A,{\bf
v}}^{*}(g_{j})_{j=1}^{m}=\sum_{i=1}^{m}\frac{1}{v_{i}}M_{i}A^{*}g_{i}$.

To simplify the exposition, we just formulate the next lemmas which
proofs are straightforward.

\begin{lem}\label{L QA comp preserv}
Let $(\textbf{W},{\bf w})$ be a Bessel fusion sequence for
$\mathcal{H}$, $A \in L(\mathcal{W}, \mathcal{H})$, $\bf v$ a
collection of weights and $V_{i}=AM_{i}\mathcal{W}$, for each $i \in
\{1,\ldots,m\}$. Then $Q_{A,{\bf v}}$ is component preserving and
$A=T_{{\bf V},{\bf v}}Q_{A,{\bf v}}$.
\end{lem}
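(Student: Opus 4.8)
The plan is to unwind the definitions and verify the two claimed properties directly. Recall that $V_i = AM_i\mathcal{W}$ by construction, and that $Q_{A,\mathbf{v}}$ sends $(f_j)_{j=1}^m$ to $\bigl(\tfrac{1}{v_i}AM_i(f_j)_{j=1}^m\bigr)_{i=1}^m$. The key observation is that the $i$-th component of $Q_{A,\mathbf{v}}(f_j)_{j=1}^m$ equals $\tfrac{1}{v_i}AM_i(f_j)_{j=1}^m$, which lies in $AM_i\mathcal{W} = V_i$ by definition; hence $Q_{A,\mathbf{v}}$ really does map into $\mathcal{V} = \bigoplus_{i=1}^m V_i$, so the statement is well-posed.

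For the component-preserving claim, I would compute $Q_{A,\mathbf{v}}M_{j,\mathbf{W}}\mathcal{W}$ and show it equals $M_{j,\mathbf{V}}\mathcal{V}$. Applying $M_{j,\mathbf{W}}$ first kills all coordinates except the $j$-th, so for an input supported on the $j$-th coordinate the $i$-th output component is $\tfrac{1}{v_i}AM_iM_{j,\mathbf{W}}(\cdot)$, which vanishes unless $i=j$ because $M_iM_j = 0$ for $i\neq j$. Thus $Q_{A,\mathbf{v}}M_{j,\mathbf{W}}\mathcal{W}$ is supported entirely on the $j$-th coordinate and equals $\{\tfrac{1}{v_j}AM_j(g)_j : g \in \mathcal{W}\}$ sitting in the $j$-th slot; since the scalar $\tfrac{1}{v_j}$ is nonzero this is exactly $M_{j,\mathbf{V}}\mathcal{V}$, as $V_j = AM_j\mathcal{W}$. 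This establishes the equality $Q_{A,\mathbf{v}}M_{j,\mathbf{W}}\mathcal{W} = M_{j,\mathbf{V}}\mathcal{V}$ for each $j$, which is precisely the definition of component preserving.

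For the identity $A = T_{\mathbf{V},\mathbf{v}}Q_{A,\mathbf{v}}$, I would apply the right-hand side to an arbitrary $(f_j)_{j=1}^m \in \mathcal{W}$ and use the formula for the synthesis operator $T_{\mathbf{V},\mathbf{v}}(g_i)_{i=1}^m = \sum_{i=1}^m v_i g_i$. Since the $i$-th component of $Q_{A,\mathbf{v}}(f_j)_{j=1}^m$ is $\tfrac{1}{v_i}AM_i(f_j)_{j=1}^m$, the weights $v_i$ cancel, giving $\sum_{i=1}^m v_i \cdot \tfrac{1}{v_i}AM_i(f_j)_{j=1}^m = A\sum_{i=1}^m M_i(f_j)_{j=1}^m = A(f_j)_{j=1}^m$, where the last equality uses $\sum_{i=1}^m M_i = I_{\mathcal{W}}$ (the $M_i$ are the coordinate projections summing to the identity). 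This yields $T_{\mathbf{V},\mathbf{v}}Q_{A,\mathbf{v}} = A$.

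I do not expect any genuine obstacle here, consistent with the authors' remark that the proof is straightforward; the only points requiring a moment of care are confirming that the image of $Q_{A,\mathbf{v}}$ indeed lands in $\mathcal{V}$ (so that the composition with $T_{\mathbf{V},\mathbf{v}}$ is defined) and the bookkeeping identities $M_iM_j = \delta_{ij}M_i$ and $\sum_i M_i = I_{\mathcal{W}}$, both immediate from the coordinatewise definition of $M_J$.
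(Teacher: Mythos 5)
Your proof is correct, and it is precisely the straightforward verification the paper alludes to when it omits the proof: the paper gives no argument beyond declaring it immediate, and your computation (image of $Q_{A,\mathbf{v}}$ lands in $\mathcal{V}$, the identity $M_iM_j=\delta_{ij}M_i$ forces the support onto the $j$-th slot, and the weights cancel against the synthesis operator via $\sum_i M_i = I_{\mathcal{W}}$) is exactly the intended one. Nothing is missing.
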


\begin{lem}\label{L W V Vtild Q Qtild comp preserv}
Let $(\textbf{W},{\bf w})$, $({\bf V},{\bf v})$ and
$({\bf\widetilde{V}},{\bf \widetilde{v}})$ be Bessel fusion
sequences for $\mathcal{H}$. Let $Q \in L(\mathcal{W}, \mathcal{V})$
and $\widetilde{Q} \in L(\mathcal{W}, \widetilde{\mathcal{V}})$. If
$T_{{\bf V},{\bf v}}Q=T_{\widetilde{V},\widetilde{v}}\widetilde{Q},$
then the following assertions hold:
\begin{enumerate}
  \item If $Q$ and $\widetilde{Q}$ are component preserving then $V_{i}=\widetilde{V}_{i}$ for $i = 1,\ldots, m$.
  \item Let $D : \mathcal{V} \rightarrow \mathcal{V},$
$D(g_{i})_{i=1}^{m}=(\frac{\widetilde{v}_{i}}{v_{i}}g_{i})_{i=1}^{m}$.
If $Q$ and $\widetilde{Q}$ are block-diagonal and
$\mathbf{V}=\widetilde{\mathbf{V}}$, then $Q=D\widetilde{Q}$.
\end{enumerate}
\end{lem}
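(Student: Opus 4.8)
The plan is to reduce both assertions to statements about the individual summands $W_j$ and $V_j$ by exploiting the component-selecting operators $M_{j,\mathbf{W}}$ and the defining relations of block-diagonal and component preserving transformations. The key preliminary observation, which I would record first, is that $M_{j,\mathbf{V}}\mathcal{V}$ is exactly the copy of $V_j$ sitting in the $j$-th slot of $\mathcal{V}$, and that applying the synthesis operator to it recovers $V_j$ itself: an element of $M_{j,\mathbf{V}}\mathcal{V}$ has the form $(\chi_{j}(i)g_i)_{i=1}^m$, so $T_{\mathbf{V},\mathbf{v}}$ sends it to $v_j g_j$, and since $v_j>0$ this ranges over all of $V_j$ as $g_j$ runs through $V_j$. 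Thus $T_{\mathbf{V},\mathbf{v}}(M_{j,\mathbf{V}}\mathcal{V})=v_jV_j=V_j$.

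For (1) I would use the component preserving hypothesis in the form $Q M_{j,\mathbf{W}}\mathcal{W}=M_{j,\mathbf{V}}\mathcal{V}$ and $\widetilde{Q}M_{j,\mathbf{W}}\mathcal{W}=M_{j,\widetilde{\mathbf{V}}}\widetilde{\mathcal{V}}$. Composing with the synthesis operators and invoking the preliminary observation gives
\begin{equation*}
T_{\mathbf{V},\mathbf{v}}Q\, M_{j,\mathbf{W}}\mathcal{W}=V_j \quad\text{and}\quad T_{\widetilde{\mathbf{V}},\widetilde{\mathbf{v}}}\widetilde{Q}\, M_{j,\mathbf{W}}\mathcal{W}=\widetilde{V}_j .
\end{equation*}
Since the hypothesis $T_{\mathbf{V},\mathbf{v}}Q=T_{\widetilde{\mathbf{V}},\widetilde{\mathbf{v}}}\widetilde{Q}$ forces these two images of the common subspace $M_{j,\mathbf{W}}\mathcal{W}$ to coincide, I conclude $V_j=\widetilde{V}_j$ for every $j$.

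For (2) the idea is to diagonalize $Q$ and $\widetilde{Q}$ and compare entrywise. Since $Q$ is block-diagonal, the equivalent relation $Q M_{j,\mathbf{W}}=M_{j,\mathbf{V}}Q$ shows that the $j$-th component $(Qf)_j$ depends only on $f_j$; writing $q_j:W_j\to V_j$ for the induced maps, I have $Q(f_i)_{i=1}^m=(q_j(f_j))_{j=1}^m$, and likewise $\widetilde{Q}(f_i)_{i=1}^m=(\tilde q_j(f_j))_{j=1}^m$ with $\tilde q_j:W_j\to \widetilde{V}_j=V_j$. Feeding these into the synthesis operators, the hypothesis becomes $\sum_i v_i q_i(f_i)=\sum_i \widetilde{v}_i \tilde q_i(f_i)$ for all $(f_i)_{i=1}^m\in\mathcal{W}$. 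Choosing $f_i=0$ for $i\neq j$ and $f_j$ arbitrary isolates one term and yields $v_j q_j=\widetilde{v}_j\tilde q_j$, i.e.\ $q_j=\tfrac{\widetilde{v}_j}{v_j}\tilde q_j$. Reassembling the components gives $(Qf)_j=\tfrac{\widetilde{v}_j}{v_j}(\widetilde{Q}f)_j=(D\widetilde{Q}f)_j$, hence $Q=D\widetilde{Q}$.

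There is no serious obstacle here; the content is essentially the independence of the summands of $\mathcal{W}=\bigoplus_i W_i$ together with $v_j>0$. The only points I would treat with care are recording the reformulation of block-diagonality as $Q M_{j,\mathbf{W}}=M_{j,\mathbf{V}}Q$ before diagonalizing in (2), and, in (1), keeping track that applying $T_{\mathbf{V},\mathbf{v}}$ to the subspace $M_{j,\mathbf{V}}\mathcal{V}$ lands on $v_jV_j=V_j$ rather than merely on a subset. The equality of whole subspaces in (1), as opposed to an inclusion, is precisely what the \emph{component preserving} hypothesis supplies, since it gives equality (rather than inclusion) in the defining relation.
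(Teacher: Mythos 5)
Your proof is correct. The paper itself omits the proof of this lemma (it only remarks that the proofs are straightforward), and your argument — reducing to the $j$-th summand via $M_{j,\mathbf{W}}$, using $T_{\mathbf{V},\mathbf{v}}(M_{j,\mathbf{V}}\mathcal{V})=v_jV_j=V_j$ for (1), and diagonalizing $Q$, $\widetilde{Q}$ and comparing the isolated terms for (2) — is exactly the straightforward argument the authors have in mind.
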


The next theorem characterizes the component preserving dual fusion
frames of $(\textbf{W},{\bf w})$ in terms of the left inverses of
$T^{*}_{\textbf{W},{\bf w}}$:

\begin{thm}\label{T V,v dual fusion frame sii Vi=ApiWj}
Let $(\textbf{W},{\bf w})$ be a fusion frame for $\mathcal{H}$. Then
$({\bf V},{\bf v})$ is a $Q$-component preserving dual fusion frame
of $(\textbf{W},{\bf w})$ if and only if $V_{i}=AM_{i}\mathcal{W}$
for each $i\in\{1,\ldots,m\}$ and $Q=Q_{A,{\bf v}},$ for some $A \in
\mathfrak{L}_{T^{*}_{\textbf{W},{\bf w}}}$. Moreover, any element of
$\mathfrak{L}_{T^{*}_{\textbf{W},{\bf w}}}$ is of the form $T_{{\bf
V},{\bf v}}Q$ where $({\bf V},{\bf v})$ is some $Q$-component
preserving dual fusion frame of $(\textbf{W},{\bf w})$.
\end{thm}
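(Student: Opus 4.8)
I want to prove the biconditional characterizing component-preserving dual fusion frames via left inverses of $T^{*}_{\textbf{W},{\bf w}}$, together with the ``moreover'' converse correspondence. The whole argument hinges on the two preparatory lemmas (Lemma~\ref{L QA comp preserv} and Lemma~\ref{L W V Vtild Q Qtild comp preserv}), so my job is mainly to glue them to the defining identity~(\ref{E TvQTw*=I}).

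Let me look at how to prove the claim.

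Let me sketch the forward and backward directions carefully.

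For the forward implication, suppose $({\bf V},{\bf v})$ is a $Q$-component preserving dual fusion frame of $(\textbf{W},{\bf w})$, so that $T_{{\bf V},{\bf v}}QT^{*}_{\textbf{W},{\bf w}}=I_{\mathcal{H}}$. Set $A:=T_{{\bf V},{\bf v}}Q \in L(\mathcal{W},\mathcal{H})$. Then $AT^{*}_{\textbf{W},{\bf w}}=I_{\mathcal{H}}$, so $A$ is a left inverse of $T^{*}_{\textbf{W},{\bf w}}$ (the injectivity of $T^{*}_{\textbf{W},{\bf w}}$ is automatic since $(\textbf{W},{\bf w})$ is a fusion frame, hence $T_{\textbf{W},{\bf w}}$ is onto). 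Since $Q$ is component preserving, $AM_{i}\mathcal{W}=T_{{\bf V},{\bf v}}QM_{i}\mathcal{W}=T_{{\bf V},{\bf v}}M_{i,{\bf V}}\mathcal{V}=V_{i}$, giving the subspace identity. Finally I must identify $Q=Q_{A,{\bf v}}$: by Lemma~\ref{L QA comp preserv}, $Q_{A,{\bf v}}$ is also component preserving with $A=T_{{\bf V},{\bf v}}Q_{A,{\bf v}}$, so $T_{{\bf V},{\bf v}}Q=T_{{\bf V},{\bf v}}Q_{A,{\bf v}}$; since both produce the same subspaces $V_i$ and both are component preserving (in particular block-diagonal) with the same weights ${\bf v}$, part~(2) of Lemma~\ref{L W V Vtild Q Qtild comp preserv} with $D=I_{\mathcal{V}}$ forces $Q=Q_{A,{\bf v}}$.

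For the backward implication, assume $A \in \mathfrak{L}_{T^{*}_{\textbf{W},{\bf w}}}$, $V_i=AM_i\mathcal{W}$, and $Q=Q_{A,{\bf v}}$. By Lemma~\ref{L QA comp preserv}, $Q_{A,{\bf v}}$ is component preserving and $A=T_{{\bf V},{\bf v}}Q_{A,{\bf v}}$, whence $T_{{\bf V},{\bf v}}QT^{*}_{\textbf{W},{\bf w}}=AT^{*}_{\textbf{W},{\bf w}}=I_{\mathcal{H}}$, verifying~(\ref{E TvQTw*=I}). One must check that $({\bf V},{\bf v})$ is genuinely a fusion frame: surjectivity of $A$ follows from $AT^{*}_{\textbf{W},{\bf w}}=I_{\mathcal{H}}$, and since $R(A)=\operatorname{span}\bigcup_i AM_i\mathcal{W}=\operatorname{span}\bigcup_i V_i=\mathcal{H}$, the spanning condition holds. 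The ``moreover'' statement is then immediate: given any $A \in \mathfrak{L}_{T^{*}_{\textbf{W},{\bf w}}}$, define $V_i=AM_i\mathcal{W}$ and $Q=Q_{A,{\bf v}}$ for any choice of weights ${\bf v}$; the direction just proved shows $({\bf V},{\bf v})$ is a $Q$-component preserving dual, and $A=T_{{\bf V},{\bf v}}Q$ by Lemma~\ref{L QA comp preserv}.

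The one genuinely delicate point is the uniqueness step $Q=Q_{A,{\bf v}}$ in the forward direction: I must be sure that component preserving implies block-diagonal (it does, since equality of ranges in particular gives the inclusion defining block-diagonality) so that Lemma~\ref{L W V Vtild Q Qtild comp preserv}(2) applies, and that the weights agreeing makes $D$ the identity. Everything else reduces to composing the two lemmas with the defining identity, so I expect no real obstacle beyond keeping the role of $A=T_{{\bf V},{\bf v}}Q$ straight.
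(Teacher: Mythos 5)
Your proof is correct and follows essentially the same route as the paper's: set $A:=T_{{\bf V},{\bf v}}Q$, invoke Lemma~\ref{L QA comp preserv} to get $A=T_{{\bf V},{\bf v}}Q_{A,{\bf v}}$ and Lemma~\ref{L W V Vtild Q Qtild comp preserv}(2) with $D=I_{\mathcal{V}}$ to conclude $Q=Q_{A,{\bf v}}$, with the converse and the ``moreover'' part read off from Lemma~\ref{L QA comp preserv}. Your extra check that $\operatorname{span}\bigcup_i V_i=\mathcal{H}$ follows from surjectivity of $A$ is a detail the paper leaves implicit, but it does not change the argument.
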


\begin{proof}
Let $(\mathbf{V},{\bf v})$ be a $Q$-component preserving dual fusion
frame of $(\mathbf{W},{\bf w})$. Since $T_{\mathbf{V},{\bf
v}}QT^{*}_{\mathbf{W},{\bf w}}=I_{\mathcal{H}}$,
$A:=T_{\mathbf{V},{\bf v}}Q \in\mathfrak{L}_{T^{*}_{\textbf{W},{\bf
w}}}$. Since $Q$ is component preserving,
$AM_{i}\mathcal{W}=T_{\mathbf{V},{\bf v}}QM_{i}\mathcal{W}=V_{i}$.
By Lemma~\ref{L QA comp preserv}, $Q_{A,{\bf v}}$ is component
preserving and $T_{\mathbf{V},{\bf v}}Q=A=T_{\mathbf{V},{\bf
v}}Q_{A,{\bf v}}$. By Lemma~\ref{L W V Vtild Q Qtild comp preserv}
(2), $Q=Q_{A,{\bf v}}$.

Let now $A\in\mathfrak{L}_{T^{*}_{\textbf{W},{\bf w}}}$ and
$V_{i}=AM_{i}\mathcal{W}$ for each $i\in\set{1,...,m}.$ By
Lemma~\ref{L QA comp preserv}, $Q_{A,{\bf v}}$ is component
preserving and $A=T_{\mathbf{V},{\bf v}}Q_{A,{\bf v}}$. Since
$A\in\mathfrak{L}_{T^{*}_{\textbf{W},{\bf w}}}$, $T_{\mathbf{V},{\bf
v}}Q_{A,{\bf v}}T^{*}_{\mathbf{W},{\bf w}}=I_{\mathcal{H}}$. So
$(\mathbf{V},{\bf v})$ is a $Q_{A,{\bf v}}$-component preserving
dual fusion frame of $(\mathbf{W},{\bf w})$.

The last assertion of the theorem follows from the previous steps of
the proof.
\end{proof}

\begin{rem}
By Theorem~\ref{T V,v dual fusion frame sii Vi=ApiWj}, we can always
associate to any $Q$-dual fusion frame $({\bf V},{\bf v})$ of
$(\textbf{W},{\bf w})$ the $Q_{A,{\bf \tilde{v}}}$-component
preserving dual fusion frame
$\{(AM_{i}\mathcal{W},\tilde{v}_{i})\}_{i=1}^{m}$ where $A=T_{{\bf
V},{\bf v}}Q$ and $\{\widetilde{v}_{i}\}_{i=1}^{m}$ are arbitrary
weights. Moreover if $Q$ is block-diagonal, then $Q_{T_{{\bf V},{\bf
v}}Q,{\bf v}}(f_j)_{j=1}^{m}=Q(f_j)_{j=1}^{m}$ for each
$(f_j)_{j=1}^{m} \in \mathcal{W}$.
\end{rem}

\subsection{The canonical dual fusion frame} \label{Ej dual canonico}

If $(\textbf{W},{\bf w})$ is a fusion frame, then
$(S_{\textbf{W},{\bf w}}^{-1}\textbf{W},{\bf w})$ is the dual fusion
frame of $(\textbf{W},{\bf w})$ in the sense of
\cite{Casazza-Kutyniok (2004)}.

By Theorem~\ref{T V,v dual fusion frame sii Vi=ApiWj},
$(S_{\textbf{W},{\bf w}}^{-1}\textbf{W},{\bf v})$ is a $Q_{A,{\bf
v}}$-component preserving dual with $A=S_{\textbf{W},{\bf
w}}^{-1}T_{\textbf{W},{\bf w}}$, ${\bf v}$ a family of arbitrary
weights and $Q_{A,{\bf v}} : \mathcal{W} \rightarrow
\bigoplus_{i=1}^{m} S_{\textbf{W},{\bf w}}^{-1}W_i$, $Q_{A,{\bf
v}}(f_j)_{j=1}^{m}=(\frac{w_{i}}{v_{i}}S_{\textbf{W},{\bf
w}}^{-1}f_{i})_{i=1}^{m}.$  In the sequel we refer to this
$Q_{S_{\textbf{W},{\bf w}}^{-1}T_{\textbf{W},{\bf w}},{\bf v}}$-dual
fusion frame as the {\em canonical dual with weights ${\bf v}$}.

Note that with a canonical dual fusion frame we have the
reconstruction formula (\ref{E reconst dual fusion frame canonico})
that can be written as
\begin{equation}\label{E TvQTw*=I canonico}
f=T_{S_{\textbf{W},{\bf w}}^{-1}W,{\bf v}}Q_{S_{\textbf{W},{\bf
w}}^{-1}T_{\textbf{W},{\bf w}},{\bf v}}T_{W,{\bf w}}^{*}f,~~~\forall
f \in \mathcal{H},
\end{equation} whereas with another dual fusion frame, (\ref{E TvQTw*=I}) provides other alternatives for the reconstruction.

\subsection{Existence of non-canonical dual fusion frames}\label{Existence of non-canonical}

A Bessel fusion sequence $(\textbf{W},{\bf w})$ is a Riesz fusion
basis if and only if $T_{\textbf{W},{\bf w}}^{*}$ is bijective, or
equivalently, $T_{\textbf{W},{\bf w}}^{*}$ has a unique left
inverse. So, if $(\textbf{W},{\bf w})$ is a Riesz fusion basis, the
only component preserving duals of $(\textbf{W},{\bf w})$ are the
canonical ones, and if we fix the weights ${\bf v}$, by Lemma~\ref{L
W V Vtild Q Qtild comp preserv}(2), the unique $Q$ for this dual is
$Q_{S_{\textbf{W},{\bf w}}^{-1}T_{\textbf{W},{\bf w}},{\bf v}}$. A
Riesz fusion basis can have other dual fusion frames (see
Example~\ref{Ej MasseyRuizStojanoff 2}). We will give now conditions
that guarantee the uniqueness of the subspaces of the duals of a
Riesz fusion basis.

\begin{prop}\label{DualRFB}
Let $(\textbf{W},{\bf w})$ be a Riesz fusion basis and ${\bf v}$ a
family of weights. The following assertions hold:
\begin{enumerate}

  \item \label{MC} Let $({\bf V},{\bf v})$ be a block-diagonal dual fusion frame of  $(\textbf{W},{\bf w})$. Then, for each $i = 1, \ldots, m$, $S_{\textbf{W},{\bf w}}^{-1}W_{i} \subseteq
  V_{i}$.
\item \label{CFRB} If $({\bf V},{\bf v})$ is a Riesz fusion basis which is a block-diagonal dual fusion frame of $(\textbf{W},{\bf w}),$ then $V_i=S_{\textbf{W},{\bf w}}^{-1}W_{i}$ for $i=1,\ldots, m.$

     \end{enumerate}
\end{prop}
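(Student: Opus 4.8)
The plan is to convert the block-diagonal duality hypothesis into a single operator identity and then extract the subspace information one component at a time. Write $A := T_{\mathbf{V},\mathbf{v}}Q \in L(\mathcal{W},\mathcal{H})$. Condition (\ref{E TvQTw*=I}) says precisely that $AT^{*}_{\mathbf{W},\mathbf{w}}=I_{\mathcal{H}}$, so $A \in \mathfrak{L}_{T^{*}_{\mathbf{W},\mathbf{w}}}$. Because $(\mathbf{W},\mathbf{w})$ is a Riesz fusion basis, $T^{*}_{\mathbf{W},\mathbf{w}}$ is bijective and therefore has a unique left inverse; since $S^{-1}_{\mathbf{W},\mathbf{w}}T_{\mathbf{W},\mathbf{w}}$ is also a left inverse (indeed $S^{-1}_{\mathbf{W},\mathbf{w}}T_{\mathbf{W},\mathbf{w}}T^{*}_{\mathbf{W},\mathbf{w}}=S^{-1}_{\mathbf{W},\mathbf{w}}S_{\mathbf{W},\mathbf{w}}=I_{\mathcal{H}}$), I conclude
\[
T_{\mathbf{V},\mathbf{v}}Q = A = S^{-1}_{\mathbf{W},\mathbf{w}}T_{\mathbf{W},\mathbf{w}}.
\]
This identity encodes everything needed for both parts.

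To prove (\ref{MC}), I apply both sides of the identity to the subspace $M_{i}\mathcal{W}$. On the right, $T_{\mathbf{W},\mathbf{w}}(M_{i}\mathcal{W})=w_{i}W_{i}=W_{i}$, so the image is $S^{-1}_{\mathbf{W},\mathbf{w}}W_{i}$. On the left, the block-diagonality of $Q$ gives $QM_{i}\mathcal{W}\subseteq M_{i,\mathbf{V}}\mathcal{V}$, and applying $T_{\mathbf{V},\mathbf{v}}$ to vectors supported only in the $i$-th component lands in $v_{i}V_{i}=V_{i}$; hence $T_{\mathbf{V},\mathbf{v}}Q(M_{i}\mathcal{W})\subseteq V_{i}$. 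Equating the two sides yields $S^{-1}_{\mathbf{W},\mathbf{w}}W_{i}\subseteq V_{i}$ for each $i$.

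To prove (\ref{CFRB}), I promote this inclusion to an equality by counting dimensions. Since $S^{-1}_{\mathbf{W},\mathbf{w}}$ is invertible it preserves dimensions, so $\dim V_{i} \geq \dim(S^{-1}_{\mathbf{W},\mathbf{w}}W_{i}) = \dim W_{i}$ for every $i$. As both $(\mathbf{W},\mathbf{w})$ and $(\mathbf{V},\mathbf{v})$ are Riesz fusion bases, $\mathcal{H}$ is the direct sum of the $W_{i}$ and also of the $V_{i}$, whence $\sum_{i=1}^{m}\dim W_{i}=d=\sum_{i=1}^{m}\dim V_{i}$. The componentwise inequalities together with equal totals force $\dim V_{i}=\dim W_{i}$ for every $i$, and combined with the inclusion from (\ref{MC}) this gives $V_{i}=S^{-1}_{\mathbf{W},\mathbf{w}}W_{i}$.

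The only step carrying any real content is the first one: noticing that, over a Riesz fusion basis, block-diagonal duality forces $T_{\mathbf{V},\mathbf{v}}Q$ to coincide with the unique canonical left inverse $S^{-1}_{\mathbf{W},\mathbf{w}}T_{\mathbf{W},\mathbf{w}}$. After that the argument is routine componentwise bookkeeping, so I expect no obstacle. It is worth emphasising that block-diagonality, being weaker than component preservation, yields only the inclusion $QM_{i}\mathcal{W}\subseteq M_{i,\mathbf{V}}\mathcal{V}$, which is exactly why (\ref{MC}) produces an inclusion rather than an equality, and why the extra Riesz hypothesis on $(\mathbf{V},\mathbf{v})$ is needed to obtain the equality in (\ref{CFRB}).
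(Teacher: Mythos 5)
Your proof is correct and follows essentially the same route as the paper's: both rest on the fact that for a Riesz fusion basis $T^{*}_{\mathbf{W},\mathbf{w}}$ has a unique left inverse, namely $S^{-1}_{\mathbf{W},\mathbf{w}}T_{\mathbf{W},\mathbf{w}}$, combined with block-diagonality to confine $T_{\mathbf{V},\mathbf{v}}QM_{i}\mathcal{W}$ inside $V_{i}$, and a dimension count for part (2). The paper merely carries out the first step elementwise (using $T^{*}_{\mathbf{W},\mathbf{w}}S^{-1}_{\mathbf{W},\mathbf{w}}T_{\mathbf{W},\mathbf{w}}=I_{\mathcal{W}}$) where you state the operator identity $T_{\mathbf{V},\mathbf{v}}Q=S^{-1}_{\mathbf{W},\mathbf{w}}T_{\mathbf{W},\mathbf{w}}$ outright.
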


\begin{proof}(\ref{MC}) Let $f_{i} \in W_{i}$. Using that $T_{{\bf V},{\bf
v}}QT_{\textbf{W},{\bf w}}^{*}=I_{\mathcal{H}}$ and that
$T_{\textbf{W},{\bf w}}^{*}S_{\textbf{W},{\bf
w}}^{-1}T_{\textbf{W},{\bf w}}=I_{\mathcal{W}}$ (the last equation
holds since $(\textbf{W},{\bf w})$ is a Riesz fusion basis), we
obtain

\centerline{ $S_{\textbf{W},{\bf w}}^{-1}f_{i}=T_{{\bf V},{\bf
v}}QT_{\textbf{W},{\bf w}}^{*}S_{\textbf{W},{\bf
w}}^{-1}T_{\textbf{W},{\bf w}}M_{i}(\frac{1}{w_{i}}f_{j})_{j=1}^{m}=
T_{{\bf V},{\bf v}}QM_{i}(\frac{1}{w_{i}}f_{j})_{j=1}^{m}=T_{{\bf
V},{\bf v}}M_{i}Q(\frac{1}{w_{i}}f_{j})_{j=1}^{m}\in V_{i}.$}

\noindent So $S_{\textbf{W},{\bf w}}^{-1}W_{i} \subseteq V_{i}.$

\medskip

(\ref{CFRB}) By (\ref{MC}), $S_{\textbf{W},{\bf
w}}^{-1}W_{i}\subseteq V_i$  for $i=1,\ldots,m.$ If there exists
$i_0\in \{1,\dots,m\}$ such that $S_{\textbf{W},{\bf
w}}^{-1}W_{i_0}\subset V_{ i_0},$ then
$\sum_{i=1}^{m}\dim(S_{\textbf{W},{\bf
w}}^{-1}W_{i})<\sum_{i=1}^{m}\dim(V_{i}),$ which is a contradiction,
since $(S_{\textbf{W},{\bf w}}^{-1}W,{\bf v})$ and $({\bf V},{\bf
v})$ are Riesz fusion bases. Thus the conclusion follows.\end{proof}

\begin{rem}
Note that Proposition~\ref{DualRFB}(\ref{CFRB}) implies that if
$(\textbf{W},{\bf w})$ is an orthonormal fusion basis and $({\bf
V},{\bf v})$ is a block-diagonal dual fusion frame which is an
orthonormal fusion basis, then $W_i=V_i$ for $i = 1, \ldots, m.$

Although it could seem surprising that a Riesz fusion basis can have
more than one dual, it is absolutely reasonable taking into account
the generality of the new definition. Moreover observe that
uniqueness of the dual of a Riesz basis in classical frame theory
can be understood in the sense of Proposition~\ref{DualRFB}
(\ref{MC}) considering that each element of the Riesz basis and of
its dual generate subspaces of dimension 1.
\end{rem}

The following result asserts that if $(\textbf{W},{\bf w})$ is an
overcomplete fusion frame (i.e. a fusion frame which is not a Riesz
fusion basis) with non trivial subspaces, there always exist
component preserving dual fusion frames which differ from the
canonical ones.

\begin{prop}
Let $(\textbf{W},{\bf w})$ be an overcomplete fusion frame such that
$W_i\neq \{0\}$ for every $i\in \{1,\dots,m\}.$ Then there exist
component preserving dual fusion frames of $(\textbf{W},{\bf w})$
different from $(S_{\textbf{W},\textbf{w}}^{-1}\textbf{W},{\bf v})$
for any family of weights ${\bf v}.$
\end{prop}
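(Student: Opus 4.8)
The plan is to exploit Theorem~\ref{T V,v dual fusion frame sii Vi=ApiWj}, which reduces the construction of component preserving dual fusion frames to the production of left inverses of $T^{*}_{\textbf{W},{\bf w}}$. Since $(\textbf{W},{\bf w})$ is overcomplete, $T^{*}_{\textbf{W},{\bf w}}$ is injective but \emph{not} surjective onto $\mathcal{W}$, so $R(T^{*}_{\textbf{W},{\bf w}})$ is a proper subspace of $\mathcal{W}$ and the left inverse is far from unique. The canonical choice corresponds to $A_{0}=S_{\textbf{W},{\bf w}}^{-1}T_{\textbf{W},{\bf w}}$; what I want is a second left inverse $A$ that yields different subspaces $AM_{i}\mathcal{W}$. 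First I would fix the well-known structure of left inverses: every $A\in\mathfrak{L}_{T^{*}_{\textbf{W},{\bf w}}}$ has the form $A=A_{0}+B$, where $B\in L(\mathcal{W},\mathcal{H})$ annihilates $R(T^{*}_{\textbf{W},{\bf w}})$, i.e. $B|_{R(T^{*}_{\textbf{W},{\bf w}})}=0$, equivalently $B=B\,\pi_{N(T_{\textbf{W},{\bf w}})}$ since $N(T_{\textbf{W},{\bf w}})=R(T^{*}_{\textbf{W},{\bf w}})^{\perp}$.

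Next I would use overcompleteness to guarantee that $N(T_{\textbf{W},{\bf w}})\neq\{0\}$: indeed $T_{\textbf{W},{\bf w}}:\mathcal{W}\to\mathcal{H}$ is onto but, because $(\textbf{W},{\bf w})$ is not a Riesz fusion basis, $\dim\mathcal{W}=\sum_{i}\dim W_{i}>d$, so by rank–nullity the kernel is nontrivial. Pick a nonzero $(g_{j})_{j=1}^{m}\in N(T_{\textbf{W},{\bf w}})$; some component $g_{j_{0}}\neq 0$. The aim is to define $B$ so that the perturbation changes exactly one of the subspaces $V_{i}=AM_{i}\mathcal{W}$ relative to $S_{\textbf{W},{\bf w}}^{-1}W_{i}$. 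Concretely, using that $W_{j_{0}}\neq\{0\}$, I would choose a nonzero $h\in W_{j_{0}}$ and define $B$ to send $M_{j_{0}}\mathcal{W}$ into a direction not contained in $S_{\textbf{W},{\bf w}}^{-1}W_{j_{0}}$, while arranging $B\,\pi_{R(T^{*}_{\textbf{W},{\bf w}})}=0$ so that $A=A_{0}+B$ remains a left inverse. Then by Theorem~\ref{T V,v dual fusion frame sii Vi=ApiWj} the family $\{(AM_{i}\mathcal{W},v_{i})\}_{i=1}^{m}$ is a $Q_{A,{\bf v}}$-component preserving dual, and $AM_{j_{0}}\mathcal{W}\neq S_{\textbf{W},{\bf w}}^{-1}W_{j_{0}}$ forces it to differ from the canonical dual for \emph{every} weight family ${\bf v}$, since changing weights only rescales each subspace and cannot alter the subspaces themselves.

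The main obstacle is the bookkeeping that simultaneously keeps $A$ a \emph{left inverse} (so $B$ must kill $R(T^{*}_{\textbf{W},{\bf w}})$) and makes the resulting $V_{j_{0}}=AM_{j_{0}}\mathcal{W}$ genuinely different from $S_{\textbf{W},{\bf w}}^{-1}W_{j_{0}}$; the two conditions pull against each other, because the freedom in $B$ lives on $N(T_{\textbf{W},{\bf w}})$ while the subspace $M_{j_{0}}\mathcal{W}$ need not sit inside that kernel. The delicate point is therefore to verify that the single coordinate direction I perturb is not already forced, i.e. that $M_{j_{0}}\mathcal{W}\not\subseteq R(T^{*}_{\textbf{W},{\bf w}})$ so that $B$ can act nontrivially on it; here one uses again that $\dim\mathcal{W}>d$ together with $W_{j_{0}}\neq\{0\}$. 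Once that noninclusion is secured, defining $B$ via a rank-one (or low-rank) map supported on the kernel component of $M_{j_{0}}\mathcal{W}$ and landing outside $S_{\textbf{W},{\bf w}}^{-1}W_{j_{0}}$ completes the construction, and the verification that the new family is a fusion frame is automatic since any $Q$-dual fusion frame is in particular a fusion frame.
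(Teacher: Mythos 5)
Your proposal is correct in substance but takes a genuinely different route from the paper. The paper argues directly on the subspaces: since $(\textbf{W},{\bf w})$ is not a Riesz fusion basis there is an index $i_{0}$ with $W_{i_{0}}\cap\text{span}\{W_{i}:i\neq i_{0}\}\neq\{0\}$; it shrinks $W_{i_{0}}$ to $\widetilde{W}_{i_{0}}=W_{i_{0}}\cap(W_{i_{0}}\cap\text{span}\{W_{i}:i\neq i_{0}\})^{\perp}$, checks that $(\widetilde{\textbf{W}},{\bf w})$ is still a fusion frame, and shows that $V_{i}=S_{\widetilde{\textbf{W}},{\bf w}}^{-1}\widetilde{W}_{i}$ together with the component preserving $\widetilde{Q}(f_{i})_{i=1}^{m}=(S_{\widetilde{\textbf{W}},{\bf w}}^{-1}\pi_{\widetilde{W}_{i}}f_{i})_{i=1}^{m}$ gives a dual of the original $(\textbf{W},{\bf w})$; the conclusion then follows from the dimension drop $\dim V_{i_{0}}=\dim\widetilde{W}_{i_{0}}<\dim W_{i_{0}}=\dim S_{\textbf{W},{\bf w}}^{-1}W_{i_{0}}$, with no case analysis needed. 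You instead parametrize $\mathfrak{L}_{T^{*}_{\textbf{W},{\bf w}}}$ as $A_{0}+B$ with $B$ vanishing on $R(T^{*}_{\textbf{W},{\bf w}})$, perturb along $N(T_{\textbf{W},{\bf w}})$, and invoke Theorem~\ref{T V,v dual fusion frame sii Vi=ApiWj}. Your route makes the full freedom in the choice of component preserving duals transparent (it is exactly the affine space of left inverses), at the price of having to verify that the perturbation actually moves the subspace; the paper's route produces one explicit non-canonical dual with an immediate certificate.

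Two loose ends in your sketch need repair. First, the key noninclusion $M_{j_{0}}\mathcal{W}\not\subseteq R(T^{*}_{\textbf{W},{\bf w}})$ does not follow from ``$\dim\mathcal{W}>d$ together with $W_{j_{0}}\neq\{0\}$'' as you state it; the correct one-line reason is that the vector $(\chi_{j_{0}}(i)g_{j_{0}})_{i=1}^{m}\in M_{j_{0}}\mathcal{W}$ has inner product $\|g_{j_{0}}\|^{2}\neq 0$ with your chosen kernel element $g$, hence does not lie in $N(T_{\textbf{W},{\bf w}})^{\perp}=R(T^{*}_{\textbf{W},{\bf w}})$. Second, ``landing outside $S_{\textbf{W},{\bf w}}^{-1}W_{j_{0}}$'' is impossible in the edge case $W_{j_{0}}=\mathcal{H}$, where $S_{\textbf{W},{\bf w}}^{-1}W_{j_{0}}=\mathcal{H}$; there you must instead choose the rank-one perturbation so that the image of $M_{j_{0}}\mathcal{W}$ under $A_{0}+B$ collapses to a proper subspace of $\mathcal{H}$ (send the non-orthogonal direction back into the $A_{0}$-image of the kernel of the associated linear functional), which again yields $V_{j_{0}}\neq S_{\textbf{W},{\bf w}}^{-1}W_{j_{0}}$. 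With these two repairs your argument is complete; your final observations (that the subspaces, not the weights, distinguish the dual from every canonical dual $(S_{\textbf{W},{\bf w}}^{-1}\textbf{W},{\bf v})$, and that Theorem~\ref{T V,v dual fusion frame sii Vi=ApiWj} already guarantees the fusion frame and component preserving properties) are correct.
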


\begin{proof} Since  $(\textbf{W},{\bf w})$ is not a Riesz fusion basis, there exists $i_0\in
\{1,\dots,m\}$ such that $W_{i_0}\cap \text{span}\{W_i:i\neq
i_0\}\neq \{0\}.$ Observe that $(\mathbf{\widetilde{W}},{\bf w})$
given by $\widetilde{W}_i=W_i$ for $i\neq i_0$ and
$\widetilde{W}_{i_0}=W_{i_0}\cap(W_{i_0}\cap\text{span}\{W_i:i\neq
i_0\})^\bot$ is a fusion frame for $\mathcal{H}$. Define
$V_i=S_{\mathbf{\widetilde{W}},{\bf w}}^{-1}\widetilde{W}_{i}$ for
$i\in \{1,\dots,m\}.$ Consider the component preserving
$\widetilde{Q}\in L(\mathcal{W}, \mathcal{V}),$ given by
$\widetilde{Q}(f_i)_{i=1}^m=(S_{\mathbf{\widetilde{W}},{\bf
w}}^{-1}\pi_{\widetilde{W}_i}f_i)_{i=1}^m.$

Let $f\in\mathcal{H}.$ Since
$\pi_{\widetilde{W}_{i_0}}\pi_{W_{i_0}}f=\pi_{\widetilde{W}_{i_0}}f,$
we obtain $T_{\mathbf{V},{\bf w}}\widetilde{Q}T_{\textbf{W},{\bf
w}}^*f=\sum_{i=1}^{m}w_i^2 S_{\mathcal{\widetilde{W}},{\bf
w}}^{-1}\pi_{\widetilde{W}_i}(f)=f.$

Now, $\dim(S_{\mathcal{\widetilde{W}},{\bf
w}}^{-1}\widetilde{W}_{i_0})=\dim(\widetilde{W}_{i_0})$ and
$\widetilde{W}_{i_0}\subseteq W_{i_0}.$ Assume that
$\widetilde{W}_{i_0}=W_{i_0}.$ Then $W_{i_0}\subseteq
(\text{span}\{W_i:i\neq i_0\})^\bot$ which is a contradiction since
$W_{i_0}\cap \text{span}\{W_i:i\neq i_0\}\neq \{0\}.$ Hence
$\dim(\widetilde{W}_{i_0})< \dim(W_{i_0})=\dim( S_{\textbf{W},{\bf
w}}^{-1}W_{i_0})$ and so $V_{i_0}\neq S_{\textbf{W},{\bf
w}}^{-1}W_{i_0}$ obtaining the desired result.\end{proof}

\section{Dual fusion frame systems and their relation with dual frames and dual projective reconstruction systems}

We begin this section by defining dual fusion frame systems. We
first introduce a linear transformation that provides the
fundamental link between the synthesis operator of a fusion frame
system with the synthesis operator of its associated frame.

Let $(\textbf{W}, {\bf w})$ be a Bessel fusion sequence and
$\mathcal{F}_{i}$ be a frame for $W_i$. Let
$$
C_{\mathcal{F}}:\oplus_{i=1}^{m} \mathbb{F}^{|L_{i}|}\rightarrow
\mathcal{W},\,\, C_{\mathcal{F}}((x_i^l)_{l \in
L_i})_{i=1}^{m}=(T_{\mathcal{F}_{i}}(x_i^l)_{l \in
L_i})_{i=1}^{m}.$$

\noindent Then $C_{\mathcal{F}}$ is surjective and
$C_{\mathcal{F}}^{*}: \mathcal{W} \rightarrow \bigoplus_{i=1}^{m}
\mathbb{F}^{|L_{i}|}$,
$C_{\mathcal{F}}^{*}(g_{i})_{i=1}^{m}=(T_{\mathcal{F}_{i}}^{*}g_{i})_{i=1}^{m}$.
The left inverses of $C_{\mathcal{F}}^{*}$ are all
$C_{\widetilde{\mathcal{F}}} \in L(\bigoplus_{i=1}^{m}
\mathbb{F}^{|L_{i}|}, \mathcal{W})$ with
$\widetilde{\mathcal{F}}_{i}$ a dual frame of $\mathcal{F}_{i}$ for
$i=1, \ldots,m$. Note that $$T_{{\bf
w}\mathcal{F}}=T_{\textbf{W},{\bf w}}C_{\mathcal{F}}\,\,\text { and
}\,\,T_{\textbf{W},{\bf w}}=T_{{\bf
w}\mathcal{F}}C_{\widetilde{\mathcal{F}}}^{*}.$$

\begin{defn}\label{D fusion frame system dual} Let $(\textbf{W},{\bf w},
\mathcal{F})$ and $({\bf V},{\bf v}, \mathcal{G})$ be two fusion
frame systems for $\mathcal{H}$ with
$|\mathcal{F}_{i}|=|\mathcal{G}_{i}|$ for $i=1, \ldots,m$. Then
$({\bf V},{\bf v}, \mathcal{G})$ is a dual fusion frame system of
$(\textbf{W},{\bf w}, \mathcal{F})$ if $({\bf V},{\bf v})$ is a
$C_{\mathcal{G}}C_{\mathcal{F}}^{*}$-dual fusion frame of
$(\textbf{W},{\bf w})$.
\end{defn}

\begin{rem}\label{R Q Mi sum Wj subset Mi sum Vj}
Observe that $C_{\mathcal{G}}C_{\mathcal{F}}^{*}: \mathcal{W}
\rightarrow \mathcal{V},\,
C_{\mathcal{G}}C_{\mathcal{F}}^{*}(f_{i})_{i=1}^{m}=(T_{\mathcal{G}_{i}}T_{\mathcal{F}_{i}}^{*}f_{i})_{i=1}^{m}$
is block-diagonal. Whereas, $M_{i}\mathcal{V}\subseteq
C_{\mathcal{G}}C_{\mathcal{F}}^{*}M_{i}\mathcal{W}$ if and only if
$T_{\mathcal{G}_{i}}(R(T_{\mathcal{G}_{i}}^{*})) \subseteq
T_{\mathcal{G}_{i}}(R(T_{\mathcal{F}_{i}}^{*}))$, or equivalently,
$R(T_{\mathcal{G}_{i}}^{*}) =
\pi_{R(T_{\mathcal{G}_{i}}^{*})}R(T_{\mathcal{F}_{i}}^{*})$.
\end{rem}
If in Definition~\ref{D fusion frame system dual}
$C_{\mathcal{G}}C_{\mathcal{F}}^{*}$ is component preserving we say
that $({\bf V},{\bf v}, \mathcal{G})$ is a \emph{component
preserving dual fusion frame system} of $(\textbf{W},{\bf w},
\mathcal{F}).$ Moreover, if in Definition~\ref{D fusion frame system
dual} $({\bf V},{\bf v})$ is a canonical dual fusion frame of
$(\textbf{W},{\bf w})$  we say that $({\bf V},{\bf v}, \mathcal{G})$
is a \emph{canonical dual fusion frame system} of $(\textbf{W},{\bf
w}, \mathcal{F})$.

In the following subsections we show that there is a close relation
of dual fusion frame systems with dual frames and dual projective
reconstruction systems, a fact that supports the idea that
Definition~\ref{D fusion frame system dual} is the proper definition
of dual fusion frame systems. Consequently, this close relation also
reveals that the inclusion of a $``Q"$ in a definition of duality
for fusion frames as in Definition~\ref{D fusion frame dual} is
natural.

\subsection{Dual fusion frame systems and dual frames}

The following theorem gives the link between the concepts of dual
fusion frame system and dual frame.

\begin{thm}\label{T dual fusion frame systems}
Let $(\textbf{W}, {\bf w}), \,(\mathbf{V}, {\bf v})$ be Bessel
fusion sequences, $\mathcal{F}_{i}$ be a frame for $W_i$ and
$\mathcal{G}_{i}$ be a frame for $V_{i}$ with
$|\mathcal{F}_{i}|=|\mathcal{G}_{i}|$ for $i=1, \ldots,m$. The
following conditions are equivalent:
  \begin{enumerate}
    \item ${\bf w}\mathcal{F}$ and ${\bf v}\mathcal{G}$ are dual frames for $\mathcal{H}.$
    \item  $(\mathbf{V}, {\bf v},\mathcal{G})$ is a dual fusion frame system of $(\textbf{W}, {\bf w},\mathcal{F}).$
  \end{enumerate}
\end{thm}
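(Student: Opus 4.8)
The plan is to reduce both statements to one and the same operator identity on $\mathcal{H}$, namely
\[
T_{\mathbf{V},\mathbf{v}}\,C_{\mathcal{G}}C_{\mathcal{F}}^{*}\,T_{\mathbf{W},\mathbf{w}}^{*}=I_{\mathcal{H}},
\]
and then to match the qualitative hypotheses of the two conditions by means of Remark~\ref{R wF marco sii WwF fusion frame system}.

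First I would record the two factorizations established just before Definition~\ref{D fusion frame system dual}, applied to the two systems: $T_{\mathbf{w}\mathcal{F}}=T_{\mathbf{W},\mathbf{w}}C_{\mathcal{F}}$ and $T_{\mathbf{v}\mathcal{G}}=T_{\mathbf{V},\mathbf{v}}C_{\mathcal{G}}$. Taking the adjoint of the first gives $T_{\mathbf{w}\mathcal{F}}^{*}=C_{\mathcal{F}}^{*}T_{\mathbf{W},\mathbf{w}}^{*}$, so that
\[
T_{\mathbf{v}\mathcal{G}}\,T_{\mathbf{w}\mathcal{F}}^{*}=T_{\mathbf{V},\mathbf{v}}\,C_{\mathcal{G}}C_{\mathcal{F}}^{*}\,T_{\mathbf{W},\mathbf{w}}^{*}.
\]
These factorizations are valid already at the level of Bessel fusion sequences, so this algebraic identity holds under the standing hypotheses of the theorem. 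The crucial observation is then that the left-hand side is exactly the operator in the dual-frame condition of Definition~\ref{D frame dual}, while the right-hand side is exactly the operator defining a $C_{\mathcal{G}}C_{\mathcal{F}}^{*}$-dual fusion frame in Definitions~\ref{D fusion frame dual} and \ref{D fusion frame system dual}; hence the two reconstruction equations $T_{\mathbf{v}\mathcal{G}}T_{\mathbf{w}\mathcal{F}}^{*}=I_{\mathcal{H}}$ and $T_{\mathbf{V},\mathbf{v}}C_{\mathcal{G}}C_{\mathcal{F}}^{*}T_{\mathbf{W},\mathbf{w}}^{*}=I_{\mathcal{H}}$ literally coincide.

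It remains to align the background hypotheses: condition (1) requires $\mathbf{w}\mathcal{F}$ and $\mathbf{v}\mathcal{G}$ to be frames for $\mathcal{H}$, whereas condition (2), through Definition~\ref{D fusion frame system dual}, presupposes that $(\mathbf{W},\mathbf{w},\mathcal{F})$ and $(\mathbf{V},\mathbf{v},\mathcal{G})$ be fusion frame systems. Here I would invoke Remark~\ref{R wF marco sii WwF fusion frame system}, which yields the equivalences ``$\mathbf{w}\mathcal{F}$ is a frame $\iff$ $(\mathbf{W},\mathbf{w},\mathcal{F})$ is a fusion frame system'' and likewise for $\mathbf{v}\mathcal{G}$. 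With this in place each direction is immediate: assuming (1), the frame property gives the two fusion frame systems and the displayed identity turns the dual-frame equation into the dual-fusion-frame-system equation; assuming (2), the fusion frame system structure gives that $\mathbf{w}\mathcal{F}$ and $\mathbf{v}\mathcal{G}$ are frames and the same identity runs in reverse. I do not anticipate a genuine obstacle: the whole content is that $C_{\mathcal{G}}C_{\mathcal{F}}^{*}$ is exactly the transformation mediating the passage between the frame and fusion-frame descriptions, and the only care needed is the bookkeeping of matching the two sets of standing hypotheses through the Remark.
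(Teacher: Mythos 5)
Your proposal is correct and is essentially the paper's own argument: the paper's proof likewise reduces both conditions to the single identity $T_{\mathbf{v}\mathcal{G}}T_{\mathbf{w}\mathcal{F}}^{*}=T_{\mathbf{V},\mathbf{v}}C_{\mathcal{G}}C_{\mathcal{F}}^{*}T_{\mathbf{W},\mathbf{w}}^{*}$ and invokes Remark~\ref{R wF marco sii WwF fusion frame system} to match the standing hypotheses. You have merely written out the factorization step that the paper leaves implicit.
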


\begin{proof} By Remark~\ref{R wF marco sii WwF fusion frame system} it only
remains to see the duality condition, and this follows from $T_{{\bf
v}\mathcal{G}}T_{{\bf w}\mathcal{F}}^{*}=T_{{\bf V},{\bf
v}}C_{\mathcal{G}}C_{\mathcal{F}}^{*}T_{\textbf{W},{\bf
w}}^{*}$.\end{proof}

The next corollary shows how to construct component preserving dual
fusion frame systems from a given fusion frame using local dual
frames for each subspace and a left inverse of its analysis
operator.

\begin{cor}\label{C A TW T fusion frame system dual}
Let $(\textbf{W}, {\bf w})$ be a fusion frame for $\mathcal{H}$, $A
\in \mathfrak{L}_{T_{\textbf{W},{\bf w}}^{*}}$ and ${\bf v}$ be a
collection of weights. For each $i=1,\ldots,m,$ let
$V_{i}=AM_{i}\mathcal{W}$, $\{f_{i}^{l}\}_{l\in L_i}$ and
$\{\tilde{f}^l_i\}_{l\in L_i}$ be dual frames for $W_i,$ and
$\tilde{\alpha}_i$ and $\tilde{\beta}_i$ frame bounds of
$\{\tilde{f}^l_i\}_{l\in L_i},$ and $\mathcal{G}_i=\{
\frac{1}{v_{i}}A(\chi_{i}(j)\tilde{f}^l_i)_{j=1}^{m}\}_{l\in L_i}$.
Then
\begin{enumerate}
 \item  $\mathcal{G}_i$ is a frame for $V_{i}$ with frame
bounds $\|T_{\textbf{W},{\bf
w}}^{*}\|^{-2}\frac{\widetilde{\alpha}_i}{v_{i}^{2}}$ and
$\|A\|^{2}\frac{\widetilde{\beta}_i}{v_{i}^{2}}.$
 \item $(\mathbf{V},\mathbf{v},\mathcal{G})$ is a component preserving dual fusion frame system of $(\textbf{W}, {\bf w},
 \mathcal{F}).$  In particular, if
$A=S^{-1}_{\mathbf{W},\mathbf{w}}T_{\mathbf{W},\mathbf{w}}$ then
$(\mathbf{V},\mathbf{v},\mathcal{G})$ is a canonical dual fusion
frame system of $(\textbf{W}, {\bf w}, \mathcal{F}).$
\end{enumerate}
\end{cor}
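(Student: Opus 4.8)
The plan is to realize each local dual family $\mathcal{G}_i$ as the image of the frame $\widetilde{\mathcal{F}}_i=\{\tilde f_i^l\}_{l\in L_i}$ under a single operator and then transport the frame bounds through that operator. I introduce the isometric embedding $\iota_i:W_i\to\mathcal{W}$, $\iota_i h=(\chi_i(j)h)_{j=1}^m$, which satisfies $\iota_i^*(f_j)_{j=1}^m=f_i$, $\iota_i^*\iota_i=I_{W_i}$, $\iota_i\iota_i^*=M_i$ and $\iota_i W_i=M_i\mathcal{W}$. Then $g_i^l=\frac1{v_i}A\iota_i\tilde f_i^l$, so the synthesis operator of $\mathcal{G}_i$ is $T_{\mathcal{G}_i}=\frac1{v_i}A\iota_i T_{\widetilde{\mathcal{F}}_i}$. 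Since $\widetilde{\mathcal{F}}_i$ spans $W_i$ and $A\iota_i$ maps $W_i$ onto $AM_i\mathcal{W}=V_i$, the family $\mathcal{G}_i$ spans $V_i$ and is therefore a frame for $V_i$. For the bounds I would compute, for $g\in V_i$, the identity $\langle g,g_i^l\rangle=\frac1{v_i}\langle(A^*g)_i,\tilde f_i^l\rangle$, where $(A^*g)_i:=\iota_i^*A^*g\in W_i$, and hence $\sum_{l\in L_i}|\langle g,g_i^l\rangle|^2=\frac1{v_i^2}\sum_{l\in L_i}|\langle(A^*g)_i,\tilde f_i^l\rangle|^2$. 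Applying the frame inequalities of $\widetilde{\mathcal{F}}_i$ to $(A^*g)_i$ sandwiches this quantity between $\frac{\tilde\alpha_i}{v_i^2}\|(A^*g)_i\|^2$ and $\frac{\tilde\beta_i}{v_i^2}\|(A^*g)_i\|^2$. The upper frame bound then follows at once from $\|(A^*g)_i\|\le\|A^*g\|\le\|A\|\,\|g\|$, giving $\|A\|^2\tilde\beta_i/v_i^2$.

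The lower bound is the delicate step and the one I expect to be the main obstacle. It amounts to the estimate $\|(A^*g)_i\|\ge\|T_{\mathbf{W},\mathbf{w}}^*\|^{-1}\|g\|$ for every $g\in V_i$. The available tool is that, taking adjoints in $AT_{\mathbf{W},\mathbf{w}}^*=I_{\mathcal{H}}$, the operator $A^*$ is a right inverse of $T_{\mathbf{W},\mathbf{w}}$, so $\|g\|=\|T_{\mathbf{W},\mathbf{w}}A^*g\|\le\|T_{\mathbf{W},\mathbf{w}}\|\,\|A^*g\|$ and thus $\|A^*g\|\ge\|T_{\mathbf{W},\mathbf{w}}^*\|^{-1}\|g\|$ for all $g$. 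The real work is to pass from the full vector $A^*g$ to its single $i$-th component $(A^*g)_i=M_iA^*g$, using that $g$ lies in $V_i=AM_i\mathcal{W}$; this localization to the $i$-th block, where $A^*g$ need not be supported, is exactly where care is needed, and I would concentrate the argument there.

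For part (2), by part (1) each $\mathcal{G}_i$ is a frame for $V_i$, and $\mathrm{span}\bigcup_i V_i=A\mathcal{W}=\mathcal{H}$ since $A$ is onto, so $(\mathbf{V},\mathbf{v},\mathcal{G})$ is a fusion frame system; it remains to verify the duality, and I would do so by proving the stronger identity $T_{\mathbf{V},\mathbf{v}}C_{\mathcal{G}}C_{\mathcal{F}}^*=A$. By Remark~\ref{R Q Mi sum Wj subset Mi sum Vj}, $C_{\mathcal{G}}C_{\mathcal{F}}^*(f_i)_{i=1}^m=(T_{\mathcal{G}_i}T_{\mathcal{F}_i}^*f_i)_{i=1}^m$; and since $\{f_i^l\}_{l}$ and $\{\tilde f_i^l\}_{l}$ are dual frames for $W_i$ we have $T_{\widetilde{\mathcal{F}}_i}T_{\mathcal{F}_i}^*=I_{W_i}$, whence $T_{\mathcal{G}_i}T_{\mathcal{F}_i}^*=\frac1{v_i}A\iota_iT_{\widetilde{\mathcal{F}}_i}T_{\mathcal{F}_i}^*=\frac1{v_i}A\iota_i$. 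Therefore $T_{\mathbf{V},\mathbf{v}}C_{\mathcal{G}}C_{\mathcal{F}}^*(f_i)_{i=1}^m=\sum_{i=1}^m v_iT_{\mathcal{G}_i}T_{\mathcal{F}_i}^*f_i=\sum_{i=1}^mA\iota_if_i=A(f_i)_{i=1}^m$, i.e. $T_{\mathbf{V},\mathbf{v}}C_{\mathcal{G}}C_{\mathcal{F}}^*=A$. Composing with $T_{\mathbf{W},\mathbf{w}}^*$ gives $T_{\mathbf{V},\mathbf{v}}C_{\mathcal{G}}C_{\mathcal{F}}^*T_{\mathbf{W},\mathbf{w}}^*=AT_{\mathbf{W},\mathbf{w}}^*=I_{\mathcal{H}}$, so $(\mathbf{V},\mathbf{v})$ is a $C_{\mathcal{G}}C_{\mathcal{F}}^*$-dual of $(\mathbf{W},\mathbf{w})$ and, by Definition~\ref{D fusion frame system dual}, $(\mathbf{V},\mathbf{v},\mathcal{G})$ is a dual fusion frame system of $(\mathbf{W},\mathbf{w},\mathcal{F})$.

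To see it is component preserving, I would combine the block-diagonality of $C_{\mathcal{G}}C_{\mathcal{F}}^*$ from Remark~\ref{R Q Mi sum Wj subset Mi sum Vj} with $R(T_{\mathcal{G}_i}T_{\mathcal{F}_i}^*)=R(\frac1{v_i}A\iota_i)=AM_i\mathcal{W}=V_i$, which yields $C_{\mathcal{G}}C_{\mathcal{F}}^*M_i\mathcal{W}=M_i\mathcal{V}$ for each $i$. Finally, for the canonical case $A=S_{\mathbf{W},\mathbf{w}}^{-1}T_{\mathbf{W},\mathbf{w}}$ one has $V_i=AM_i\mathcal{W}=S_{\mathbf{W},\mathbf{w}}^{-1}W_i$, so $(\mathbf{V},\mathbf{v})$ is precisely the canonical dual fusion frame of subsection~\ref{Ej dual canonico}; hence $(\mathbf{V},\mathbf{v},\mathcal{G})$ is a canonical dual fusion frame system, as claimed.
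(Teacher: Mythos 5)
Your treatment of part (2) is correct and is essentially the paper's own argument. The paper verifies the single identity $C_{\mathcal{G}}C_{\mathcal{F}}^{*}=Q_{A,\mathbf{v}}$ by inserting the reproducing formula $h_i=\sum_{l\in L_i}\langle h_i,f_i^l\rangle\tilde f_i^l$ into $AM_i$, which is the same computation as your $T_{\mathcal{G}_i}T_{\mathcal{F}_i}^{*}=\frac{1}{v_i}A\iota_i$, and then concludes by Theorem~\ref{T V,v dual fusion frame sii Vi=ApiWj}; your route through the identity $T_{\mathbf{V},\mathbf{v}}C_{\mathcal{G}}C_{\mathcal{F}}^{*}=A$ is the same argument phrased after composing with $T_{\mathbf{V},\mathbf{v}}$, and your verification of the component preserving property and of the canonical case is fine.

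The gap is in part (1), and you have located it exactly. The paper disposes of (1) by citing \cite[Corollary 5.3.2]{Christensen (2003)}, applied to the frame $\{(\chi_i(j)\tilde f_i^l)_{j=1}^{m}\}_{l\in L_i}$ for $M_i\mathcal{W}$ and the operator $\frac{1}{v_i}A$; the lower bound produced by that corollary is $\tilde\alpha_i\,\|U^{\dagger}\|^{-2}$ for the pseudo-inverse of the operator actually applied. Turning that into $\|T_{\mathbf{W},\mathbf{w}}^{*}\|^{-2}\tilde\alpha_i/v_i^{2}$ uses $\|A^{\dagger}\|\le\|T_{\mathbf{W},\mathbf{w}}^{*}\|$ (valid, since $T_{\mathbf{W},\mathbf{w}}^{*}$ is a right inverse of $A$), but what is needed is the pseudo-inverse of $A$ \emph{restricted to} $M_i\mathcal{W}$, and $T_{\mathbf{W},\mathbf{w}}^{*}g$ is not supported on the $i$-th block. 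This is precisely the localization you flag as "the real work", and it cannot be carried out: the inequality $\|M_iA^{*}g\|\ge\|T_{\mathbf{W},\mathbf{w}}^{*}\|^{-1}\|g\|$ for $g\in V_i$ is false in general. Take $m=2$, $W_1=W_2=\mathcal{H}=\mathbb{F}^{2}$, $w_i=v_i=1$, $A(h_1,h_2)=\epsilon h_1+(1-\epsilon)h_2$, and $\mathcal{F}_1=\widetilde{\mathcal{F}}_1$ an orthonormal basis: then $\mathcal{G}_1=\{\epsilon e_l\}_{l}$ has optimal lower frame bound $\epsilon^{2}$, whereas $\|T_{\mathbf{W},\mathbf{w}}^{*}\|^{-2}\tilde\alpha_1/v_1^{2}=1/2$. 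So the stated lower constant is not a frame bound in general; what survives of (1) is the qualitative fact that $\mathcal{G}_i$ spans, hence is a frame for, $V_i$ (which you do prove), the upper bound $\|A\|^{2}\tilde\beta_i/v_i^{2}$, and a lower bound expressed through the smallest nonzero singular value of $A|_{M_i\mathcal{W}}$. Since part (2) uses only the qualitative frame property, it is unaffected.
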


\begin{proof}The set $\{(\chi_{i}(j)\tilde{f}^l_i)_{j=1}^{m}\}_{l\in
L_i}$ is a frame for $M_{i}\mathcal{W}$. Thus, by \cite[Corollary
5.3.2]{Christensen (2003)}, (1) holds.

\noindent If $(h_i)_{i=1}^m \in \mathcal{W}$, then
$Q_{A,\mathbf{v}}(h_i)_{i=1}^m=(\frac{1}{v_{i}}AM_{i}(h_{j})_{j=1}^{m})_{i=1}^m=(
\frac{1}{v_{i}}AM_{i}(\sum_{l\in
L_j}<h_j,f_j^l>\tilde{f}^l_j)_{j=1}^{m})_{i=1}^m=(\sum_{l\in
L_i}<h_i,f_i^l>
\frac{1}{v_{i}}A(\chi_{i}(j)\tilde{f}^l_i)_{j=1}^{m})_{i=1}^m=C_{\mathcal{G}}C_{\mathcal{F}}^{*}(h_i)_{i=1}^m$.
So (2) follows from (1) and Theorem~\ref{T V,v dual fusion frame sii
Vi=ApiWj}.\end{proof}

The next result exhibits a way to construct component preserving
dual fusion frame systems from a given frame using a left inverse of
its analysis operator.

\begin{cor}\label{C A TF T fusion frame system dual}
Let ${\bf w}$ and ${\bf v}$ be two collections of weights. Let ${\bf
w}\mathcal{F}$ be a frame for $\mathcal{H}$ with local frame bounds
$\alpha_i$, $\beta_i$, $A \in \mathfrak{L}_{T_{{\bf
w}\mathcal{F}}^{*}}$ and $\{\{e_{i}^{l}\}_{l\in L_i}\}_{i=1}^m$ be
the standard basis for $\oplus_{i=1}^{m}\mathbb{F}^{|L_{i}|}$. For
each $i \in \{1,\ldots,m\}$, let
$W_{i}=\text{span}\{f_{i}^{l}\}_{l\in L_i}$ and
$V_{i}=\text{span}\{\frac{1}{v_{i}}Ae_{i}^{l}\}_{l\in L_i}$. Set
$\mathcal{G}=\{\{\frac{1}{v_{i}}Ae^l_i\}_{l\in L_i}\}_{i=1}^{m}.$
Then
\begin{enumerate}
 \item $\{\frac{1}{v_{i}}Ae_{i}^{l}\}_{l\in L_i}$ is a frame for $V_{i}$ with frame
bounds $\|T_{{\bf
w}\mathcal{F}}^{*}\|^{-2}\frac{\alpha_i}{v_{i}^{2}}$ and
$\|A\|^{2}\frac{\beta_i}{v_{i}^{2}}$.
 \item $(\mathbf{V}, {\bf v}, \mathcal{G})$ is a dual fusion frame system of $(\textbf{W}, {\bf w}, \mathcal{F}).$
\end{enumerate}
\end{cor}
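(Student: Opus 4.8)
The plan is to treat part (2) as the substantive claim and to deduce it from Theorem~\ref{T dual fusion frame systems}, reserving the quantitative frame-bound computation for part (1). The guiding observation, which makes everything collapse, is that under synthesis the weighted system $\mathbf{v}\mathcal{G}$ is exactly the left inverse $A$.

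First I would dispatch the qualitative content of part (1). By construction $V_i=\text{span}\,\mathcal{G}_i$, so in finite dimensions $\mathcal{G}_i$ is automatically a frame for $V_i$; the only real content is the explicit bounds. I would exhibit $\mathcal{G}_i$ as the image, under the bounded operator $\frac{1}{v_i}A$, of the standard basis $\{e_i^l\}_{l\in L_i}$ of the $i$-th coordinate block, i.e. of the copy of $\mathbb{F}^{|L_i|}$ inside $\bigoplus_{j=1}^m\mathbb{F}^{|L_j|}$ (a Parseval frame for that block). Applying the image-of-a-frame result \cite[Corollary 5.3.2]{Christensen (2003)} --- exactly as in the proof of Corollary~\ref{C A TW T fusion frame system dual} --- yields that $\mathcal{G}_i$ is a frame for $V_i=A(\mathbb{F}^{|L_i|})$, with frame bounds controlled by the norm and the pseudo-inverse norm of this restricted operator. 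The stated bounds $\|T_{\mathbf{w}\mathcal{F}}^{*}\|^{-2}\alpha_i/v_i^2$ and $\|A\|^2\beta_i/v_i^2$ then follow by estimating those two quantities against $\|A\|$, $\|T_{\mathbf{w}\mathcal{F}}^{*}\|$, and the local frame bounds $\alpha_i,\beta_i$ of $\mathbf{w}\mathcal{F}$.

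For part (2) I would reduce everything to a statement about ordinary dual frames and then invoke Theorem~\ref{T dual fusion frame systems}. Its hypotheses are met: $\mathcal{F}_i$ is a frame for $W_i=\text{span}\{f_i^l\}_{l\in L_i}$, $\mathcal{G}_i$ is a frame for $V_i$ by part (1), and $|\mathcal{F}_i|=|\mathcal{G}_i|=|L_i|$. The key computation is that the synthesis operator of $\mathbf{v}\mathcal{G}$ equals $A$: for $(x_i^l)_{i,l}\in\bigoplus_{j}\mathbb{F}^{|L_j|}$ one has $T_{\mathbf{v}\mathcal{G}}(x_i^l)_{i,l}=\sum_{i,l}x_i^l\,v_i\big(\tfrac{1}{v_i}Ae_i^l\big)=A\sum_{i,l}x_i^l e_i^l=A(x_i^l)_{i,l}$, since $\{e_i^l\}$ is the standard basis. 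Consequently $T_{\mathbf{v}\mathcal{G}}=A$ is onto (so $\mathbf{v}\mathcal{G}$ is indeed a frame for $\mathcal{H}$), and, because $A\in\mathfrak{L}_{T_{\mathbf{w}\mathcal{F}}^{*}}$, we get $T_{\mathbf{v}\mathcal{G}}T_{\mathbf{w}\mathcal{F}}^{*}=AT_{\mathbf{w}\mathcal{F}}^{*}=I_{\mathcal{H}}$; that is, $\mathbf{v}\mathcal{G}$ is a dual frame of $\mathbf{w}\mathcal{F}$ in the sense of \eqref{E operadores D frame dual}. Theorem~\ref{T dual fusion frame systems} then immediately upgrades this to the assertion that $(\mathbf{V},\mathbf{v},\mathcal{G})$ is a dual fusion frame system of $(\mathbf{W},\mathbf{w},\mathcal{F})$.

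The easy and the hard parts are now clearly separated. Once the identity $T_{\mathbf{v}\mathcal{G}}=A$ is in hand, part (2) is essentially free, and the qualitative frame structure required to apply Theorem~\ref{T dual fusion frame systems} is automatic from the definitions of $W_i$ and $V_i$ as spans. The only genuinely computational point --- and the one I expect to be the main obstacle --- is pinning down the \emph{sharp} constants in part (1), in particular justifying the lower frame bound: this requires bounding the pseudo-inverse, equivalently the smallest nonzero singular value, of the block-restriction of $A$ in terms of $\|T_{\mathbf{w}\mathcal{F}}^{*}\|$ and $\alpha_i$, which is precisely the delicate estimate underlying the cited \cite[Corollary 5.3.2]{Christensen (2003)}.
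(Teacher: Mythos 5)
Your proof is correct and follows essentially the same route as the paper: part (1) via \cite[Corollary 5.3.2]{Christensen (2003)}, and part (2) by showing that ${\bf v}\mathcal{G}$ is a dual frame of ${\bf w}\mathcal{F}$ and then invoking Theorem~\ref{T dual fusion frame systems}. The only cosmetic difference is that you verify the identity $T_{{\bf v}\mathcal{G}}=A$ (hence $T_{{\bf v}\mathcal{G}}T_{{\bf w}\mathcal{F}}^{*}=I_{\mathcal{H}}$) by direct computation, whereas the paper cites \cite[Lemma 5.6.3]{Christensen (2003)}, whose content is exactly that computation.
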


\begin{proof}Part (1) follows from \cite[Corollary 5.3.2]{Christensen
(2003)}. By \cite[Lemma 5.6.3]{Christensen (2003)}, ${\bf
w}\mathcal{F}$ and ${\bf v}\mathcal{G}$ are dual frames for
$\mathcal{H}$. So, part (2) follows from Theorem~\ref{T dual fusion
frame systems}.\end{proof}

Let $f \in \mathcal{H}$. For a fusion frame system $(\textbf{W},
{\bf w},\mathcal{F})$ for $\mathcal{H}$ with local frame bounds
$\alpha,\,\beta$ and associated local dual frames
$\{\tilde{f}_i^l\}_{l\in L_i},\, i=1,\ldots,m,$ in
\cite{Casazza-Kutyniok-Li (2008)}  it is considered the
\textit{centralized reconstruction}
\begin{equation}\label{cent}
f=\sum_{i=1}^m \sum_{l\in L_i}\langle f,w_if_i^l\rangle (S_{{\bf w}
\mathcal{F}}^{-1}w_if_i^l)
\end{equation} and the \textit{distributed reconstruction}
\begin{equation}\label{distr}
f=\sum_{i=1}^m \sum_{l\in L_i}\langle f,w_if_i^l\rangle
(S_{\textbf{W},\bf w}^{-1}w_i \tilde{f}_i^l).
\end{equation}

Let now $({\bf V},{\bf v}, \mathcal{G})$ be any dual fusion frame
system of $(\textbf{W}, {\bf w},\mathcal{F}).$ We have the
reconstruction formula
\begin{equation}\label{recgral}
f=\sum_{i=1}^m \sum_{l\in L_i}\langle f,w_if_i^l\rangle v_ig_i^l.
\end{equation}
By Corollary~\ref{C A TF T fusion frame system dual}  with
$A=S_{{\bf w}\mathcal{F}}^{-1}T_{{\bf w}\mathcal{F}}$, $(\textbf{W},
{\bf w},\mathcal{F})$ and $(S_{{\bf w}\mathcal{F}}^{-1}\textbf{W},
1,S_{\bf w\mathcal{F}}^{-1}{\bf w}\mathcal{F})$ are dual fusion
frame systems for $\mathcal{H},$ and hence (\ref{cent}) turns out to
be a particular case of (\ref{recgral}). On the other hand, by
Corollary~\ref{C A TW T fusion frame system dual}  with
$A=S_{\textbf{W},{\bf w}}^{-1}T_{\textbf{W},{\bf w}}$, $(\textbf{W},
{\bf w},\mathcal{F})$ and $(S_{\textbf{W},{\bf
w}}^{-1}\textbf{W},1,S_{\textbf{W},{\bf w}}^{-1} {\bf
w}\mathcal{\widetilde{F}})$ are dual fusion frame systems for
$\mathcal{H},$ and so (\ref{distr}) can also be seen as a particular
case of (\ref{recgral}). The advantage of reconstruction
(\ref{recgral}) is that now we can give many different
representations of $f$ according to our needs, since we have more
freedom for the choice of $\{g_i^l\}_{l\in L_i},\,i=1,\ldots, m.$

\subsection{Dual fusion frame systems and dual projective reconstruction systems}

The concept of reconstruction systems for finite-dimensional Hilbert
spaces was introduced in \cite{Massey (2009)}. Previously in
\cite{Sun (2006)} reconstruction systems for any separable Hilbert
spaces were called g-frames.

\begin{defn}\label{D reconstruction system}
Let $T_{i} \in L(\mathbb{F}^{n_{i}},\mathcal{H})$ for $i=1,\ldots
m$.
\begin{enumerate}
  \item The \textit{synthesis operator} of $(T_{i})_{i=1}^{m}$ is $T : \bigoplus_{i=1}^{m} \mathbb{F}^{n_{i}}\rightarrow \mathcal{H}$,
$T(x_{i})_{i=1}^{m}=\sum_{i=1}^{m}T_{i}x_{i}$ and the
\textit{analysis operator} is $T^{*} : \mathcal{H} \rightarrow
\bigoplus_{i=1}^{m} \mathbb{F}^{n_{i}}$,
$T^{*}f=(T_{i}^{*}f)_{i=1}^{m}.$
  \item The sequence
$(T_{i})_{i=1}^{m}$ is an
$(m,\mathbf{n},\mathcal{H})$-\textit{reconstruction system} if
$\text{span}\cup_{i=1}^{m}R(T_{i})=\mathcal{H}.$
  \item In case $(T_{i})_{i=1}^{m}$ is an
$(m,\mathbf{n},\mathcal{H})$-reconstruction system,
$S=\sum_{i=1}^{m}T_{i}T_{i}^{*}$ is called the
\textit{reconstruction system operator} of $(T_{i})_{i=1}^{m}$.
\end{enumerate}
\end{defn}
An $(m,1,\mathcal{H})$-reconstruction system is a frame. The set of
$(m,\mathbf{n},\mathcal{H})$-reconstruction systems is denoted with
$\mathcal{RS}(m,\mathbf{n},\mathcal{H})$. If $n_{i}=n$ for
$i=1,\ldots m,$ we write $(m,n,\mathcal{H})$-reconstruction system.

\begin{defn}(cf. \cite[Definition 2.5]{ Massey-Ruiz-Stojanoff (2012a)})\label{D reconstruction system dual} Let
$(T_{i})_{i=1}^{m}, (\widetilde{T}_{i})_{i=1}^{m} \in
\mathcal{RS}(m,\mathbf{n},\mathcal{H})$. Then $(T_{i})_{i=1}^{m}$
and $(\widetilde{T}_{i})_{i=1}^{m}$ are dual reconstruction systems
if $\widetilde{T}T^{*}=I_{\mathcal{H}}$.
\end{defn}

In \cite{Massey-Ruiz-Stojanoff (2012a)} the relation between
reconstruction systems and fusion frames is established via
\textit{projective} reconstruction systems.

\begin{defn} $(T_{i})_{i=1}^{m} \in
\mathcal{RS}(m,\mathbf{n},\mathcal{H})$, is said to be
\textit{projective} if there exists a sequence of weights
$\mathbf{w}=(w_{i})_{i=1}^{m} \in \mathbb{R}_{+}^{m}$ such that
$T_{i}^{*}T_{i}=w_{i}^{2}I_{\mathbb{F}^{n_{i}}} \text{, } i = 1,
\ldots, m.$
\end{defn}

If $(T_{i})_{i=1}^{m} \in \mathcal{RS}(m,\mathbf{n},\mathcal{H})$ is
projective, then $w_{i}=\|T_{i}\|_{sp}$,
$T_{i}T_{i}^{*}=w_{i}^{2}P_{R(T_{i})}$,
$S=\sum_{i=1}^{m}w_{i}^{2}P_{R(T_{i})}$ and
$\{(R(T_{i}),\|T_{i}\|_{sp})\}_{i=1}^m$ is a fusion frame for
$\mathcal{H}$. Conversely, if $(\textbf{W},\mathbf{w})$ is a fusion
frame for $\mathcal{H}$, then there exists a (non unique) projective
$(T_{i})_{i=1}^{m} \in \mathcal{RS}(m,\mathbf{n},\mathcal{H})$ such
that $R(T_{i})=W_i$ and $\|T_{i}\|_{sp}=w_{i}$.

The next corollary gives the relation between dual fusion frame
systems and dual reconstruction systems in the projective case.

\begin{cor}\label{C T dual fusion frame systems projective reconstruction system}
Let $(\textbf{W},\mathbf{w})$ and $(\mathbf{V},\mathbf{v})$ be
fusion frames for $\mathcal{H}$, and $(T_{i})_{i=1}^{m}$ and
$(\widetilde{T}_{i})_{i=1}^{m}$ be projective
$(m,\mathbf{n},\mathcal{H})$-reconstruction systems for
$\mathcal{H}$ such that $R(T_{i})=W_i$, $\|T_{i}\|_{sp}=w_{i}$, and
$R(\widetilde{T}_{i})=V_{i}$ and $\|\widetilde{T}_{i}\|_{sp}=v_{i}$,
respectively. For $\{e_{n_{i}}^l\}_{l = 1}^{n_{i}}$  an orthonormal
basis for $\mathbb{F}^{n_{i}},$ set
$\mathcal{F}_{i}=\{\frac{1}{w_{i}}T_{i}e_{n_{i}}^l\}_{l= 1}^{n_{i}}$
and
$\mathcal{G}_{i}=\{\frac{1}{v_{i}}\widetilde{T}_{i}e_{n_{i}}^l\}_{l=
1}^{n_{i}}$ for $l=1,\ldots,n_i.$ Then the following conditions are
equivalent.
\begin{enumerate}
  \item $(\mathbf{V},\mathbf{v}, \mathcal{G})$ is a dual fusion frame system of
  $(\textbf{W},\mathbf{w},\mathcal{F}).$
  \item $(\widetilde{T}_{i})_{i=1}^{m}$ is a dual
  $(m,\mathbf{n},\mathcal{H})$-reconstruction system of
  $(T_{i})_{i=1}^{m}$.
\end{enumerate}
\end{cor}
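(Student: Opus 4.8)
The plan is to reduce everything to the equivalence already recorded in Theorem~\ref{T dual fusion frame systems}, which states that $(\mathbf{V},\mathbf{v},\mathcal{G})$ is a dual fusion frame system of $(\mathbf{W},\mathbf{w},\mathcal{F})$ if and only if $\mathbf{w}\mathcal{F}$ and $\mathbf{v}\mathcal{G}$ are dual frames for $\mathcal{H}$. So the whole argument rests on showing that this frame-duality condition is literally the reconstruction-system duality $\widetilde{T}T^{*}=I_{\mathcal{H}}$ of Definition~\ref{D reconstruction system dual}.

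First I would check that the given data actually form fusion frame systems. Projectivity gives $T_i^{*}T_i=w_i^{2}I_{\mathbb{F}^{n_i}}$ and $\widetilde{T}_i^{*}\widetilde{T}_i=v_i^{2}I_{\mathbb{F}^{n_i}}$, so $\frac{1}{w_i}T_i$ and $\frac{1}{v_i}\widetilde{T}_i$ are isometries onto $W_i=R(T_i)$ and $V_i=R(\widetilde{T}_i)$. Hence $\mathcal{F}_i$ and $\mathcal{G}_i$ are (orthonormal) bases of $W_i$ and $V_i$, so $(\mathbf{W},\mathbf{w},\mathcal{F})$ and $(\mathbf{V},\mathbf{v},\mathcal{G})$ are fusion frame systems with $|\mathcal{F}_i|=|\mathcal{G}_i|=n_i$, exactly as required in Definition~\ref{D fusion frame system dual}.

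The key computation is $T_{\mathbf{v}\mathcal{G}}T_{\mathbf{w}\mathcal{F}}^{*}=\widetilde{T}T^{*}$. Since $w_i f_i^l=T_i e_{n_i}^l$, for $f\in\mathcal{H}$ we have $T_{\mathbf{w}\mathcal{F}}^{*}f=(\langle f, T_i e_{n_i}^l\rangle)_{l,i}=(\langle T_i^{*}f, e_{n_i}^l\rangle)_{l,i}$. Applying $T_{\mathbf{v}\mathcal{G}}$ and using $v_i g_i^l=\widetilde{T}_i e_{n_i}^l$ gives $\sum_{i}\widetilde{T}_i\bigl(\sum_l \langle T_i^{*}f, e_{n_i}^l\rangle e_{n_i}^l\bigr)$. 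The inner sum is the expansion of $T_i^{*}f$ in the orthonormal basis $\{e_{n_i}^l\}_l$, hence equals $T_i^{*}f$, and the whole expression collapses to $\sum_i \widetilde{T}_i T_i^{*}f=\widetilde{T}T^{*}f$. Equivalently, the synthesis operator of the frame $\mathbf{w}\mathcal{F}$ coincides, under the unitary identification $\mathbb{F}^{n_i}\cong W_i$ induced by $\{e_{n_i}^l\}_l$, with the reconstruction-system synthesis operator $T$, and likewise $T_{\mathbf{v}\mathcal{G}}$ with $\widetilde{T}$, the two unitaries cancelling in the product.

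Combining the pieces: by Theorem~\ref{T dual fusion frame systems}, condition (1) holds if and only if $\mathbf{w}\mathcal{F}$ and $\mathbf{v}\mathcal{G}$ are dual frames, i.e. if and only if $T_{\mathbf{v}\mathcal{G}}T_{\mathbf{w}\mathcal{F}}^{*}=I_{\mathcal{H}}$; by the displayed identity this reads $\widetilde{T}T^{*}=I_{\mathcal{H}}$, which by Definition~\ref{D reconstruction system dual} is precisely condition (2). The only delicate point is the key computation, where one must keep careful track of the double index $(i,l)$ and observe that the orthonormal basis $\{e_{n_i}^l\}_l$ is exactly what converts the coordinatewise inner products produced by $T_{\mathbf{w}\mathcal{F}}^{*}$ back into the vectors $T_i^{*}f$; once this bookkeeping is in place the chain of equivalences is immediate.
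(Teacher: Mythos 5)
Your proposal is correct and follows essentially the same route as the paper: verify that $\mathcal{F}_{i}$ and $\mathcal{G}_{i}$ are frames for $W_{i}$ and $V_{i}$, establish the identity $T_{\mathbf{v}\mathcal{G}}T_{\mathbf{w}\mathcal{F}}^{*}=\sum_{i=1}^{m}\widetilde{T}_{i}T_{i}^{*}=\widetilde{T}T^{*}$, and conclude via Theorem~\ref{T dual fusion frame systems} together with the two duality definitions. You simply spell out the index bookkeeping that the paper leaves implicit.
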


\begin{proof}We have that $\mathcal{F}_{i}$ is a frame for $W_i$,
$\mathcal{G}_{i}$  is a frame for $V_{i}$ and
$\sum_{i=1}^{m}\widetilde{T}_{i}T_{i}^{*}=T_{w\mathcal{G}}T_{w\mathcal{F}}^{*}.$
Then the conclusion follows from Definition~\ref{D reconstruction
system dual}, Definition~\ref{D frame dual} and Theorem~\ref{T dual
fusion frame systems}.\end{proof}

In view of the relation between fusion frames and projective
reconstruction systems, the study of duality of fusion frames can be
done in the context of projective reconstruction systems using
Definition~\ref{D reconstruction system dual}. This approach is
considered in \cite{Massey-Ruiz-Stojanoff (2012a)} and
\cite{Massey-Ruiz-Stojanoff (2012b)}. But a dual reconstruction
system of a projective reconstruction system is not always
projective. In \cite{Massey-Ruiz-Stojanoff (2012b)} the authors
provide examples of projective reconstruction systems with non
projective canonical dual or without projective duals at all. These
projective reconstruction systems, along with their associated
fusion frames, are considered in examples \ref{Ej
MasseyRuizStojanoff 2} and \ref{Ej MasseyRuizStojanoff 1} below. It
is worth to note that with Definition~\ref{D fusion frame dual} the
dual of a fusion frame is always a fusion frame. Moreover, as it was
shown in subsection 3.1, a fusion frame has always a canonical dual
fusion frame. Similar considerations for fusion frame systems follow
from Definition~\ref{D fusion frame system dual} and
Corollary~\ref{C A TW T fusion frame system dual}.

\section{Optimal dual fusion frames for erasures}\label{S ODFFE}

Having different duals is convenient in many applications e.g. in
the theory of \textit{optimal dual fusion frames for erasures} that
will be discussed in this section. In this case, (\ref{E TvQTw*=I})
(or (\ref{recgral}) ) can give a reconstruction that behaves better
than the ones given in (\ref{E TvQTw*=I canonico}) ((\ref{cent})  or
(\ref{distr})).

Let $(\textbf{W},\mathbf{w})$ be a fusion frame for $\mathcal{H}$.
In applications an element $f \in \mathcal{H}$ (e. g. a signal) is
converted into the data vectors $T_{\textbf{W},\mathbf{w}}^{*}f$. In
an ideal setting these vectors are transmitted and $f$ can be
reconstructed by the receiver using
$f=T_{\mathbf{V},\mathbf{v}}QT_{\textbf{W},\mathbf{w}}^{*}f,$ where
$(\mathbf{V},\mathbf{v})$ is some $Q$-dual of
$(\textbf{W},\mathbf{w})$. But in real implementations, sometimes
some of the data vectors, or part of them, are lost or erased, and
it is necessary to reconstruct $f$ with the partial information at
hand. There are several approaches to study this problem, here we
consider optimal dual fusion frames for a fixed fusion frame when a
blind reconstruction process is used, in a similar way as in
\cite{Lopez-Han (2010), Leng-Han (2011)} for frames and in
\cite{Massey-Ruiz-Stojanoff (2012b)} for projective reconstruction
systems.

\subsection{Optimal dual fusion frames for erasures of subspaces}\label{S ODFFE subspaces}

Let $J \subseteq \{1, \ldots, m\}$ and suppose that the data vectors
corresponding to the subspaces $\{W_{j}\}_{j \in J}$ are lost. The
reconstruction then gives $T_{\mathbf{V},\mathbf{v}}QM_{\{1, \ldots,
m\} \setminus J}T_{\textbf{W},\mathbf{w}}^{*}f$. So we need to find
those dual fusion frames of $(\textbf{W},\mathbf{w})$ that are in
some sense optimal for this situation.

Fix $r \in \{1, \ldots, m\}$. Let $\mathcal{P}_{r}^{m}:=\{J
\subseteq \{1, \ldots, m\} : |J|=r\}$. Noting that
$M_{J}=I_{\mathcal{W}}-M_{\{1, \ldots, m\} \setminus J}$, given a
$Q$-dual fusion frame $(\mathbf{V},\mathbf{v})$ of
$(\textbf{W},\mathbf{w})$ we consider the vector error

\centerline{$e(r,(\textbf{W},\mathbf{w}),(\mathbf{V},\mathbf{v}),Q)=(\|T_{\mathbf{V},\mathbf{v}}QM_{J}T_{\textbf{W},\mathbf{w}}^{*}\|_{F})_{J
\in \mathcal{P}_{r}^{m}}.$}

For $p\in\mathbb{N}\cup\{\infty\}$ we define inductively:

\centerline{$e_{1}^{(p)}(\textbf{W},\mathbf{w})=\text{inf}\{\|e(1,(\textbf{W},\mathbf{w}),(\mathbf{V},\mathbf{v}),Q)\|_{p}:
(\mathbf{V},\mathbf{v}) ~\text{is a $Q$-dual fusion frame of}~
(\textbf{W},\mathbf{w})\},$}

$\mathcal{D}_{1}^{(p)}(\textbf{W},\mathbf{w})$ as the set of
$((\mathbf{V},\mathbf{v}),Q)$ where $(\mathbf{V},\mathbf{v})$ is a
$Q$-dual fusion frame of $(\textbf{W},\mathbf{w})$ and

\centerline{$\|e(1,(\textbf{W},\mathbf{w}),(\mathbf{V},\mathbf{v}),Q)\|_{p}=e_{1}^{(p)}(\textbf{W},\mathbf{w}),$}

\centerline{$e_{r}^{(p)}(\textbf{W},\mathbf{w})=\text{inf}\{
\|e(r,(\textbf{W},\mathbf{w}),(\mathbf{V},\mathbf{v}),Q)\|_{p}:((\mathbf{V},\mathbf{v}),Q)
\in \mathcal{D}_{r-1}^{(p)}(\textbf{W},\mathbf{w})\},$}

\centerline{$\mathcal{D}_{r}^{(p)}(\textbf{W},\mathbf{w})=\{((\mathbf{V},\mathbf{v}),Q)
\in
\mathcal{D}_{r-1}^{(p)}(\textbf{W},\mathbf{w}):\|e(r,(\textbf{W},\mathbf{w}),(\mathbf{V},\mathbf{v}),Q)\|_{p}
=e_{r}^{(p)}(\textbf{W},\mathbf{w})\},$}

\noindent in case each
$\mathcal{D}_{r}^{(p)}(\textbf{W},\mathbf{w})$, called the set of
$(r,p)$-loss optimal dual fusion frames for
$(\textbf{W},\mathbf{w})$, is non empty.

Note that
$M_{i}T_{\textbf{W},\mathbf{w}}^{*}T_{\textbf{W},\mathbf{w}}M_{i}^{*}=w_{i}^{2}M_{i}$.
Let $A \in L(\mathcal{W},\mathcal{H})$, then
\begin{equation}\label{E T mse 1a}
\|AM_{i}T_{\textbf{W},\mathbf{w}}^{*}\|_{F}^{2}=w_{i}^{2}\|AM_{i}\|_{F}^{2}.
\end{equation}

\subsubsection{The mean square error}\label{S ODFFE subspaces p=2}

Consider the mean square error,

\centerline{$\|e(r,(\textbf{W},\mathbf{w}),(\mathbf{V},\mathbf{v}),Q)\|_{2}=(\sum_{J
\in
\mathcal{P}_{r}^{m}}\|T_{\mathbf{V},\mathbf{v}}QM_{J}T_{\textbf{W},\mathbf{w}}^{*}\|_{F}^{2})^{1/2}.$}

The next theorem describes a $(r,2)$-loss optimal component
preserving dual fusion frame of a given fusion frame. It also
asserts that the reconstruction formula provided by this dual
coincides with the reconstruction formula provided by any other
$(r,2)$-loss optimal dual fusion frame. Furthermore, it shows that
it is the only $(r,2)$-loss optimal component preserving dual fusion
frame and when it coincides with a canonical dual.

\begin{thm}\label{T ODFF p=2}
Let $(\textbf{W},\mathbf{w})$ be a fusion frame for $\mathcal{H}$.
Let $D: \mathcal{W} \rightarrow \mathcal{W}$,
$D(f_i)_{i=1}^{m}=(\frac{1}{w_{i}v_{i}}f_{i})_{i=1}^{m}$ and
$S_{D}=T_{\textbf{W},\mathbf{v}}DT_{\textbf{W},\mathbf{w}}^{*}$.
Then $S_{D}$ is a selfadjoint positive invertible operator and if
$Q_{D}: \mathcal{W} \rightarrow \bigoplus_{j=1}^{m}
S_{D}^{-1}W_{j}$,
$Q_{D}(f_j)_{j=1}^{m}=(\frac{1}{w_{j}v_{j}}S_{D}^{-1}f_{j})_{j=1}^{m}$
then
\begin{enumerate}
  \item $(S_{D}^{-1}\textbf{W},\mathbf{v})$ is a $(r,2)$-loss optimal
component preserving $Q_{D}$-dual for $(\textbf{W},\mathbf{w})$.
  \item If $(\mathbf{V},\mathbf{v})$ is
a $(r,2)$-loss optimal $Q$-dual fusion frame of
$(\textbf{W},\mathbf{w})$, then
$T_{\mathbf{V},\mathbf{v}}Q=T_{S_{D}^{-1}\textbf{W},\mathbf{v}}Q_{D}$.
  \item If
$(\mathbf{V},\mathbf{v})$ is a $(r,2)$-loss optimal $Q$-component
preserving dual fusion frame of $(\textbf{W},\mathbf{w})$ then
$\mathbf{V}=S_{D}^{-1}\textbf{W}$ and $Q=Q_{D}$.
  \item If $w_{i}=w$ for $i = 1, \ldots, m$, then   $S_{D}^{-1}\textbf{W}=S_{\textbf{W},w}^{-1}\textbf{W}$ and $Q_{D}=Q_{S_{\textbf{W},w}^{-1}T_{\textbf{W},w},\mathbf{v}}$.
\end{enumerate}
\end{thm}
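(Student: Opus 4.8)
The plan is to reduce the whole problem to a single constrained least-squares problem for the reconstruction operator $A:=T_{\mathbf{V},\mathbf{v}}Q$. First I would compute $S_D$ explicitly: since $T^{*}_{\textbf{W},\mathbf{w}}f=(w_i\pi_{W_i}f)_{i=1}^m$ and $T_{\textbf{W},\mathbf{v}}(g_i)_{i=1}^m=\sum_i v_i g_i$, a direct substitution collapses the weights and gives $S_D=\sum_{i=1}^m\pi_{W_i}$. This is visibly selfadjoint and positive, and it is invertible because $\langle S_D f,f\rangle=\sum_i\|\pi_{W_i}f\|^2=0$ would force $f\perp\mathrm{span}\bigcup_i W_i=\mathcal{H}$. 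The candidate dual is then checked directly (or via Lemma~\ref{L QA comp preserv} applied to $A_0:=T_{S_D^{-1}\textbf{W},\mathbf{v}}Q_D$): one computes $T_{S_D^{-1}\textbf{W},\mathbf{v}}Q_D T^{*}_{\textbf{W},\mathbf{w}}f=S_D^{-1}\sum_j\pi_{W_j}f=f$, so $(S_D^{-1}\textbf{W},\mathbf{v})$ is a component preserving $Q_D$-dual.

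Next I would exploit that the error $\|T_{\mathbf{V},\mathbf{v}}QM_JT^{*}_{\textbf{W},\mathbf{w}}\|_F$ depends on the dual only through $A=T_{\mathbf{V},\mathbf{v}}Q$, and that by Theorem~\ref{T V,v dual fusion frame sii Vi=ApiWj} the operators so obtained are exactly the left inverses $A\in\mathfrak{L}_{T^{*}_{\textbf{W},\mathbf{w}}}$. The combinatorial heart is to expand $\Phi_r(A):=\sum_{J\in\mathcal{P}_r^m}\|AM_JT^{*}_{\textbf{W},\mathbf{w}}\|_F^2$ using $M_J=\sum_{j\in J}M_j$, the orthogonality $M_jM_k=\delta_{jk}M_j$, the resolution $\sum_jM_j=I_{\mathcal{W}}$, and the duality $AT^{*}_{\textbf{W},\mathbf{w}}=I_{\mathcal{H}}$. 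Counting the subsets $J$ that contain a fixed index, respectively a fixed pair of indices, gives $\Phi_r(A)=\binom{m-2}{r-1}\,\Sigma(A)+\binom{m-2}{r-2}\dim\mathcal{H}$, where $\Sigma(A):=\sum_{i=1}^m\|AM_iT^{*}_{\textbf{W},\mathbf{w}}\|_F^2$ and the second term is the constant $\|I_{\mathcal{H}}\|_F^2=\dim\mathcal{H}$. Since $\binom{m-2}{r-1}>0$ for $r<m$, minimizing $\Phi_r$ is equivalent to minimizing $\Sigma$ for every such $r$; the minimizing set is thus independent of $r$, so the nested families $\mathcal{D}_r^{(2)}(\textbf{W},\mathbf{w})$ all collapse to the set of minimizers of $\Sigma$, and the inductive definition causes no extra difficulty.

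The core step is then to minimize $\Sigma$. Writing $P_i:=AM_iT^{*}_{\textbf{W},\mathbf{w}}\in L(\mathcal{H})$, one has $\Sigma(A)=\sum_i\|P_i\|_F^2$, while $\sum_iP_i=AT^{*}_{\textbf{W},\mathbf{w}}=I_{\mathcal{H}}$ and each $P_i$ satisfies $P_i=P_i\pi_{W_i}$ since $M_iT^{*}_{\textbf{W},\mathbf{w}}f$ is supported in the $i$-th slot and equals $w_i\pi_{W_i}f$ there (the identity underlying (\ref{E T mse 1a})); conversely every family $(P_i)$ obeying these two conditions comes from a unique $A\in\mathfrak{L}_{T^{*}_{\textbf{W},\mathbf{w}}}$. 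This is a strictly convex problem on a nonempty affine set, so the minimizer is unique. A Frobenius-orthogonality (Lagrange multiplier) argument shows that at the optimum $P_i=X\pi_{W_i}$ for a single operator $X$ independent of $i$; the constraint $\sum_iP_i=XS_D=I_{\mathcal{H}}$ then forces $X=S_D^{-1}$, i.e. $P_i=S_D^{-1}\pi_{W_i}$. Recognizing this optimal family as $A_0=T_{S_D^{-1}\textbf{W},\mathbf{v}}Q_D$ proves items (1) and (2).

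Finally, item (3) follows from (2): any $(r,2)$-loss optimal component preserving $Q$-dual must satisfy $T_{\mathbf{V},\mathbf{v}}Q=A_0=T_{S_D^{-1}\textbf{W},\mathbf{v}}Q_D$, whence Lemma~\ref{L W V Vtild Q Qtild comp preserv}(1) gives $\mathbf{V}=S_D^{-1}\textbf{W}$ and part~(2) of the same lemma (the two duals sharing the weights $\mathbf{v}$, so $D=I$) gives $Q=Q_D$. Item (4) is a short computation: for $w_i\equiv w$ one has $S_{\textbf{W},w}=w^2S_D$, so $S_D^{-1}W_i=S_{\textbf{W},w}^{-1}W_i$ and $Q_D(f_j)_{j=1}^m=(\tfrac{w}{v_j}S_{\textbf{W},w}^{-1}f_j)_{j=1}^m$, which is exactly the canonical expression $Q_{S_{\textbf{W},w}^{-1}T_{\textbf{W},w},\mathbf{v}}$ of subsection~\ref{Ej dual canonico}. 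The \emph{main obstacle} is the structural least-squares minimization---proving that the optimizer has the common-multiplier form $P_i=X\pi_{W_i}$---together with the combinatorial identity that decouples the dependence on the erasure size $r$; once these are in place, everything else is bookkeeping via the two lemmas.
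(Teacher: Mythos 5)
Your proposal is correct, and its skeleton matches the paper's (explicit computation of $S_{D}=\sum_{i}\pi_{W_{i}}$, verification of the candidate dual, reduction via Theorem~\ref{T V,v dual fusion frame sii Vi=ApiWj} to the left inverses of $T^{*}_{\textbf{W},\mathbf{w}}$, and Lemma~\ref{L W V Vtild Q Qtild comp preserv} for item (3)), but the two key steps are handled by genuinely different means. For the minimization, the paper \emph{guesses} the optimizer $A_{0}=T_{S_{D}^{-1}\textbf{W},\mathbf{v}}Q_{D}$ and verifies it by completing the square: it expands $\|AM_{i}\|_{F}^{2}=\|A_{0}M_{i}\|_{F}^{2}+\|(A-A_{0})M_{i}\|_{F}^{2}+2\,\mathrm{Re}\,\mathrm{tr}[\cdots]$ and shows the cross terms telescope to $\mathrm{tr}[(A-A_{0})T^{*}_{\textbf{W},\mathbf{w}}S_{D}^{-1}]=0$ using $AT^{*}_{\textbf{W},\mathbf{w}}=A_{0}T^{*}_{\textbf{W},\mathbf{w}}=I_{\mathcal{H}}$; this Pythagorean identity gives optimality and uniqueness of $T_{\mathbf{V},\mathbf{v}}Q$ in one stroke. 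You instead \emph{derive} the optimizer from first-order conditions on the affine set $\{(P_{i}):P_{i}=P_{i}\pi_{W_{i}},\ \sum_{i}P_{i}=I_{\mathcal{H}}\}$; this works (the Frobenius-orthogonal complement of $L(\mathcal{H})\pi_{W_{i}}$ is $L(\mathcal{H})\pi_{W_{i}^{\perp}}$, so the multiplier argument really does force $P_{i}=\Lambda\pi_{W_{i}}$ and then $\Lambda=S_{D}^{-1}$), but that identification of the complement is the one step you should write out, since it is where the common-multiplier form comes from. For the dependence on $r$, the paper argues only the case $r=1$ and disposes of $r\geq 2$ by observing that all $(1,2)$-loss optimal duals share the same operator $T_{\mathbf{V},\mathbf{v}}Q$ and hence the same error vector at every level; your identity $\Phi_{r}(A)=\binom{m-2}{r-1}\Sigma(A)+\binom{m-2}{r-2}\dim\mathcal{H}$ (which I checked: each index lies in $\binom{m-1}{r-1}$ sets $J$, each pair in $\binom{m-2}{r-2}$, and $\sum_{j,k}\langle P_{j},P_{k}\rangle_{F}=\dim\mathcal{H}$) is more informative, showing directly that every $\Phi_{r}$ with $r<m$ is a strictly increasing affine function of $\Sigma$, so the nested sets $\mathcal{D}_{r}^{(2)}$ stabilize for a structural reason rather than an a posteriori one. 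The remaining items ((3) via both parts of Lemma~\ref{L W V Vtild Q Qtild comp preserv} with $D=I$, and (4) via $S_{D}=w^{-2}S_{\textbf{W},w}$) coincide with the paper's treatment.
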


\begin{proof}Clearly, $S_{D}=T_{\textbf{W},\mathbf{v}}DT_{\textbf{W},\mathbf{w}}^{*}=\sum_{i=1}^{m}\pi_{W_{i}}$
is selfadjoint.

Let $f \in \mathcal{H}$. If $\alpha > 0$ is the lower fusion frame
bound of $(\textbf{W},\mathbf{w})$, then $$\langle
S_{D}f,f\rangle=\sum_{i=1}^{m}w_{i}^{-2}w_{i}^{2}\langle\pi_{W_{i}}f,f\rangle
\geq(\min_{i \in \{1, \ldots, m\}}w_{i}^{-2})\langle
S_{\textbf{W},\mathbf{w}}f,f\rangle \geq (\min_{i \in \{1, \ldots,
m\}}w_{i}^{-2}) \alpha \|f\|^{2}.$$ So $S_{D}$ is positive and
invertible. Since $S_{D}^{-1}$ is linear, it is easy to see that
$Q_{D}$ is component preserving.

We have

\centerline{$T_{S_{D}^{-1}\textbf{W},\mathbf{v}}Q_{D}T_{\textbf{W},\mathbf{w}}^{*}=\sum_{i=1}^{m}S_{D}^{-1}\pi_{W_{i}}=S_{D}^{-1}S_{D}=I_{\mathcal{H}},$}

\noindent thus $(S_{D}^{-1}\textbf{W},\mathbf{v})$ is a
$Q_{D}$-component preserving dual fusion frame of
$(\textbf{W},\mathbf{w})$.

Let $(\mathbf{V},\mathbf{v})$ be a fusion frame for $\mathcal{H}$.
Using (\ref{E T mse 1a}),
\begin{align}
\|T_{\mathbf{V},\mathbf{v}}QM_{i}T_{\textbf{W},\mathbf{w}}^{*}\|_{F}^{2}=&w_{i}^{2}\|T_{\mathbf{V},\mathbf{v}}QM_{i}\|_{F}^{2}\notag\\
=&w_{i}^{2}\|T_{S_{D}^{-1}\textbf{W},\mathbf{v}}Q_{D}M_{i}\|_{F}^{2}+w_{i}^{2}\|T_{\mathbf{V},\mathbf{v}}QM_{i}-T_{S_{D}^{-1}\textbf{W},
\mathbf{v}}Q_{D}M_{i}\|_{F}^{2}\notag\\
&+2\text{Re}(w_{i}^{2}\text{tr}[(T_{\mathbf{V},\mathbf{v}}QM_{i}-T_{S_{D}^{-1}\textbf{W},\mathbf{v}}Q_{D}M_{i})M_{i}^{*}Q_{D}^{*}T_{S_{D}^{-1}\textbf{W},\mathbf{v}}^{*}])\label{E
T mse 1}.
\end{align}
Suppose that $(\mathbf{V},\mathbf{v})$ is a $Q$-dual fusion frame of
$(\textbf{W},\mathbf{w})$. Using
$T_{S_{D}^{-1}\textbf{W},\mathbf{v}}Q_{D}M_{i}=w_{i}^{-2}S_{D}^{-1}T_{\textbf{W},\mathbf{w}}M_{i}$
and
$T_{\mathbf{V},\mathbf{v}}QT_{\textbf{W},\mathbf{w}}^{*}=T_{S_{D}^{-1}\textbf{W},\mathbf{v}}Q_{D}T_{\textbf{W},\mathbf{w}}^{*}=I_{\mathcal{H}}$
we have
\begin{align}\label{E T mse 2}
\sum_{i=1}^{m}w_{i}^{2}\text{tr}[(T_{\mathbf{V},\mathbf{v}}QM_{i}-&T_{S_{D}^{-1}\textbf{W},\mathbf{v}}Q_{D}M_{i})M_{i}^{*}Q_{D}^{*}T_{S_{D}^{-1}\textbf{W},\mathbf{v}}^{*}]=\notag\\
&=\sum_{i=1}^{m}w_{i}^{2}w_{i}^{-2}\text{tr}[(T_{\mathbf{V},\mathbf{v}}QM_{i}-T_{S_{D}^{-1}\textbf{W},\mathbf{v}}Q_{D}M_{i})M_{i}^{*}T_{\textbf{W},\mathbf{w}}^{*}S_{D}^{-1}]\notag\\
&=\text{tr}[(T_{\mathbf{V},\mathbf{v}}Q-T_{S_{D}^{-1}\textbf{W},\mathbf{v}}Q_{D})T_{\textbf{W},\mathbf{w}}^{*}S_{D}^{-1}]=0.
\end{align}
By (\ref{E T mse 1}), (\ref{E T mse 1a}) and (\ref{E T mse 2}) we
obtain
\begin{align}\label{E T mse 3}
\sum_{i=1}^{m}\|T_{\mathbf{V},\mathbf{v}}QM_{i}T_{\textbf{W},\mathbf{w}}^{*}\|_{F}^{2}
=&\sum_{i=1}^{m}\|T_{S_{D}^{-1}\textbf{W},\mathbf{v}}Q_{D}M_{i}T_{\textbf{W},\mathbf{w}}^{*}\|_{F}^{2}\notag\\
&+\sum_{i=1}^{m}w_{i}^{2}\|T_{\mathbf{V},\mathbf{v}}QM_{i}-T_{S_{D}^{-1}\textbf{W},\mathbf{v}}Q_{D}M_{i}\|_{F}^{2}.
\end{align}Thus,
$\|e(1,(\textbf{W},\mathbf{w}),(\mathbf{V},\mathbf{v}),Q)\|_{2} \geq
\|e(1,(\textbf{W},\mathbf{w}),(S_{D}^{-1}\textbf{W},\mathbf{v}),Q_{D})\|_{2}$.
So $(S_{D}^{-1}\textbf{W},\mathbf{v}) \in
\mathcal{D}_{1}^{(2)}(\textbf{W},\mathbf{w})$.

If $(\mathbf{V},\mathbf{v}) \in
\mathcal{D}_{1}^{(2)}(\textbf{W},\mathbf{w}),$ then
$\|e(1,(\textbf{W},\mathbf{w}),(\mathbf{V},\mathbf{v}),Q)\|_{2} =
\|e(1,(\textbf{W},\mathbf{w}),(S_{D}^{-1}\textbf{W},\mathbf{v}),Q_{D})\|_{2},$
and by (\ref{E T mse 3})
\begin{equation}\label{E T mse 4}
T_{\mathbf{V},\mathbf{v}}Q=T_{S_{D}^{-1}\textbf{W},\mathbf{v}}Q_{D}.
\end{equation}
Suppose now that $Q$ is component preserving. By (\ref{E T mse 4})
and Lemma~\ref{L W V Vtild Q Qtild comp preserv},
$\mathbf{V}=S_{D}^{-1}\textbf{W}$ and $Q=Q_{D}$.

By the hierarchical definitions of
$\mathcal{D}_{r}^{(p)}(\textbf{W},\mathbf{w})$ for $r \geq 1$, the
conclusions (1)-(3) follow.

If $w_{i}=w$ for $i = 1, \ldots, m$, then
$S_{D}=w^{-2}S_{\textbf{W},w}$. Thus, (4) follows.\end{proof}

\subsubsection{The worst-case error}\label{S ODFFE subspaces p = infty}

Consider $p=\infty$. In this case, we obtain the worst-case error

\centerline{$\|e(r,(\textbf{W},\mathbf{w}),(\mathbf{V},\mathbf{v}),Q)\|_{\infty}=\max_{J
\in
\mathcal{P}_{r}^{m}}\|T_{\mathbf{V},\mathbf{v}}QM_{J,\textbf{W}}T_{\textbf{W},\mathbf{w}}^{*}\|_{F}.$}

To prove Theorem~\ref{T ODFFE p infty canonico} we need the
following proposition that gives some properties of the set of
elements in $\mathfrak{L}_{T_{\textbf{W},\mathbf{w}}^{*}}$
satisfying certain optimality condition.

\begin{prop}\label{P W w set duals}
Let $(\textbf{W},\mathbf{w})$ be a fusion frame for $\mathcal{H}$.
Then
\begin{equation}\label{E P W w set duals}\{A \in
\mathfrak{L}_{T_{\textbf{W},\mathbf{w}}^{*}}: \max_{1 \leq i \leq
m}\|AM_{i}T_{\textbf{W},\mathbf{w}}^{*}\|_{F} = \min_{B \in
\mathfrak{L}_{T_{\textbf{W},\mathbf{w}}^{*}}}\max_{1 \leq i \leq
m}\|BM_{i}T_{\textbf{W},\mathbf{w}}^{*}\|_{F}\}\end{equation} is non
empty, compact and convex.
\end{prop}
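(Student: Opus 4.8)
The plan is to recognize the quantity being minimized as the value of a genuine \emph{norm} on the finite-dimensional space $L(\mathcal{W},\mathcal{H})$, restricted to the affine set $\mathfrak{L}_{T^{*}_{\textbf{W},\mathbf{w}}}$, and then to read off the three asserted properties from standard facts about minimizing a continuous coercive convex function over a non-empty closed convex set. First I would record that, since $(\textbf{W},\mathbf{w})$ is a fusion frame, $T_{\textbf{W},\mathbf{w}}$ is onto and hence $T^{*}_{\textbf{W},\mathbf{w}}$ is injective; thus $\mathfrak{L}:=\mathfrak{L}_{T^{*}_{\textbf{W},\mathbf{w}}}$ is non-empty, and it is an affine subspace of $L(\mathcal{W},\mathcal{H})$ (the solution set of the linear equation $A\,T^{*}_{\textbf{W},\mathbf{w}}=I_{\mathcal{H}}$), in particular closed and convex. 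Define $\Phi:L(\mathcal{W},\mathcal{H})\to\mathbb{R}$ by $\Phi(A)=\max_{1\le i\le m}\|A M_{i}T^{*}_{\textbf{W},\mathbf{w}}\|_{F}$, so that the set in \eqref{E P W w set duals} is precisely the set of minimizers of $\Phi$ over $\mathfrak{L}$.

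The key step is to show that $\Phi$ is a norm on $L(\mathcal{W},\mathcal{H})$. Homogeneity and the triangle inequality are immediate, since $\Phi$ is the maximum of the seminorms $A\mapsto\|A M_{i}T^{*}_{\textbf{W},\mathbf{w}}\|_{F}$. The only nontrivial point, and the crux of the argument, is positive definiteness. For this I would first verify that $M_{i}R(T^{*}_{\textbf{W},\mathbf{w}})=M_{i}\mathcal{W}$: indeed $M_{i}T^{*}_{\textbf{W},\mathbf{w}}f$ has $w_{i}\pi_{W_{i}}f$ in slot $i$ and $0$ elsewhere, and as $f$ runs over $\mathcal{H}$ the vector $\pi_{W_{i}}f$ runs over all of $W_{i}$ (recall $w_{i}>0$). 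Consequently $A M_{i}T^{*}_{\textbf{W},\mathbf{w}}=0$ forces $A$ to vanish on $M_{i}\mathcal{W}$, and if this holds for every $i$ then, since $\mathcal{W}=\bigoplus_{i=1}^{m}M_{i}\mathcal{W}$, we conclude $A=0$. Hence $\Phi(A)=0$ implies $A=0$, so $\Phi$ is a norm.

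Finally I would harvest the conclusion. As a norm on a finite-dimensional space, $\Phi$ is continuous, convex and coercive, and each closed ball $\{A:\Phi(A)\le c\}$ is compact and convex. Continuity and coercivity on the non-empty closed set $\mathfrak{L}$ guarantee that $\mu:=\min_{B\in\mathfrak{L}}\Phi(B)$ is attained (minimize over the compact set $\mathfrak{L}\cap\{A:\Phi(A)\le c\}$ for any $c\ge\Phi(A_{1})$ with $A_{1}\in\mathfrak{L}$), so the set in \eqref{E P W w set duals} is non-empty. Since $\mu$ is the minimum, that set equals $\mathfrak{L}\cap\{A:\Phi(A)\le\mu\}$, an intersection of a closed affine subspace with a compact convex $\Phi$-ball; it is therefore closed, bounded and convex, hence compact and convex, as claimed.

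I expect the main obstacle to be exactly the positive-definiteness of $\Phi$: once it is established that $\Phi$ is a norm, the coercivity that forces the minimizer set to be bounded (and the minimum to be attained) is automatic, and everything else reduces to routine convex analysis. The content of that step is that the objective genuinely controls the \emph{whole} variation of $A$ inside $\mathfrak{L}$, which is precisely what the identity $M_{i}R(T^{*}_{\textbf{W},\mathbf{w}})=M_{i}\mathcal{W}$ together with $\mathcal{W}=\bigoplus_{i=1}^{m}M_{i}\mathcal{W}$ provides.
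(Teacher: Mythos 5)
Your proposal is correct and follows essentially the same route as the paper: both arguments hinge on showing that $A\mapsto\max_{1\le i\le m}\|AM_{i}T^{*}_{\textbf{W},\mathbf{w}}\|_{F}$ is a genuine norm on $L(\mathcal{W},\mathcal{H})$, with positive definiteness obtained from $R(M_{i}T^{*}_{\textbf{W},\mathbf{w}})=M_{i}\mathcal{W}$ and $A=\sum_{i=1}^{m}AM_{i}$, and then deduce nonemptiness and compactness by minimizing over the intersection of the closed set $\mathfrak{L}_{T^{*}_{\textbf{W},\mathbf{w}}}$ with a norm ball, and convexity from convexity of the norm and of $\mathfrak{L}_{T^{*}_{\textbf{W},\mathbf{w}}}$. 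Your explicit remark that $\mathfrak{L}_{T^{*}_{\textbf{W},\mathbf{w}}}$ is nonempty because $T^{*}_{\textbf{W},\mathbf{w}}$ is injective is a small point the paper leaves implicit, but otherwise the two proofs coincide.
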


\begin{proof}The map $\|.\|_{\textbf{W},\mathbf{w}}: L(\mathcal{W},\mathcal{H})  \rightarrow \mathbb{R}^{+}$,
$\|A\|_{\textbf{W},\mathbf{w}}=\max_{1 \leq i \leq
m}\|AM_{i}T_{\textbf{W},\mathbf{w}}^{*}\|_{F}$ \noindent is a norm
in $L(\mathcal{W},\mathcal{H})$. To see this, let $A \in
L(\mathcal{W},\mathcal{H})$ such that $\|A\|_{\textbf{W},\mathbf{w}}
= 0$. Then $AM_{i}T_{\textbf{W},\mathbf{w}}^{*}=0$ for $i = 1,
\ldots, m$, and since
$\text{R}(M_{i}T_{\textbf{W},\mathbf{w}}^{*})=M_i\mathcal{W}$, it
follows that $AM_i\mathcal{W}=\{0\}$ for $i = 1, \ldots, m$. Thus
$A=\sum_{i=1}^{m}AM_i=0$. The other norm properties are immediate.

Since the set $\mathfrak{L}_{T_{\textbf{W},\mathbf{w}}^{*}}$ is
closed in $L(\mathcal{W},\mathcal{H})$ under the usual norm and all
norms in a finite-dimensional Hilbert space are equivalent,
$\mathfrak{L}_{T_{\textbf{W},\mathbf{w}}^{*}}$ is a closed subset of
$L(\mathcal{W},\mathcal{H})$ under the norm
$\|.\|_{\textbf{W},\mathbf{w}}$. Given $B_{0} \in
\mathfrak{L}_{T_{\textbf{W},\mathbf{w}}^{*}}$, $B_{0} \neq 0$, there
exists an $A_{0}$ in the non empty compact set $\{A \in
\mathfrak{L}_{T_{\textbf{W},\mathbf{w}}^{*}}:
\|A\|_{\textbf{W},\mathbf{w}} \leq
\|B_{0}\|_{\textbf{W},\mathbf{w}}\}$ where the continuous map
$\|.\|_{\textbf{W},\mathbf{w}}$ attains its minimum. So,
$\|A_{0}\|_{\textbf{W},\mathbf{w}}=\min_{A \in
\mathfrak{L}_{T_{\textbf{W},\mathbf{w}}^{*}}}\|A\|_{\textbf{W},\mathbf{w}}$,
and the set (\ref{E P W w set duals}) is non empty and compact.

Since $\mathfrak{L}_{T_{\textbf{W},\mathbf{w}}^{*}}$ is convex and
$\|.\|_{\textbf{W},\mathbf{w}}$ is a convex map, the set (\ref{E P W
w set duals}) is convex.\end{proof}

Theorem~\ref{T ODFFE p infty canonico} gives sufficient conditions
that assure that the only $(r,\infty)$-loss optimal component
preserving dual fusion frames are the canonical ones.

\begin{thm}\label{T ODFFE p infty canonico}
Let $(\textbf{W},\mathbf{w})$ be a fusion frame for $\mathcal{H}$.
If $w_{i}^{2}\|S_{\textbf{W},\mathbf{w}}^{-1}\pi_{W_{i}}\|_{F}=c$
for each $i = 1, \ldots, m$, then
\begin{enumerate}
  \item $S_{\textbf{W},\mathbf{w}}^{-1}T_{\textbf{W},\mathbf{w}}$ is the unique
  element of the set (\ref{E P W w set
duals}).
  \item The only $(r,\infty)$-loss optimal component preserving dual fusion frames of $(\textbf{W},\mathbf{w})$ are the canonical ones $(S_{\textbf{W},\mathbf{w}}^{-1}\textbf{W},\mathbf{v})$ with arbitrary vector of weights $\mathbf{v}$.
\end{enumerate}
\end{thm}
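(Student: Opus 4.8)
The plan is to reduce both statements to a single extremal property of the canonical left inverse $A_0:=S_{\textbf{W},\mathbf{w}}^{-1}T_{\textbf{W},\mathbf{w}}\in\mathfrak{L}_{T^*_{\textbf{W},\mathbf{w}}}$, namely that it is the \emph{unique} minimizer of the Frobenius norm among all left inverses of $T^*_{\textbf{W},\mathbf{w}}$. First I would record the computations linking the relevant quantities. Since the projections $M_i$ are selfadjoint, orthogonal and satisfy $\sum_i M_i=I_{\mathcal{W}}$, formula (\ref{E T mse 1a}) gives, for every $A\in L(\mathcal{W},\mathcal{H})$,
\[
\|A\|_{F}^{2}=\sum_{i=1}^{m}\|AM_{i}\|_{F}^{2}=\sum_{i=1}^{m}w_{i}^{-2}\|AM_{i}T^{*}_{\textbf{W},\mathbf{w}}\|_{F}^{2}.
\]
Moreover, since $T_{\textbf{W},\mathbf{w}}M_{i}$ acts as multiplication by $w_{i}$ on the $i$-th summand, a short computation together with (\ref{E T mse 1a}) yields $\|A_0M_{i}T^{*}_{\textbf{W},\mathbf{w}}\|_{F}=w_{i}^{2}\|S_{\textbf{W},\mathbf{w}}^{-1}\pi_{W_{i}}\|_{F}$, so the hypothesis states precisely that $\|A_0M_{i}T^{*}_{\textbf{W},\mathbf{w}}\|_{F}=c$ is independent of $i$.

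Next I would establish the extremal property. Writing an arbitrary left inverse as $A=A_0+Z$ with $ZT^{*}_{\textbf{W},\mathbf{w}}=0$ (equivalently $T_{\textbf{W},\mathbf{w}}Z^{*}=0$), the identity $A_0=S_{\textbf{W},\mathbf{w}}^{-1}T_{\textbf{W},\mathbf{w}}$ gives $\langle A_0,Z\rangle_{F}=\mathrm{tr}[Z^{*}S_{\textbf{W},\mathbf{w}}^{-1}T_{\textbf{W},\mathbf{w}}]=\mathrm{tr}[S_{\textbf{W},\mathbf{w}}^{-1}T_{\textbf{W},\mathbf{w}}Z^{*}]=0$, whence $\|A\|_{F}^{2}=\|A_0\|_{F}^{2}+\|Z\|_{F}^{2}$ and $A_0$ is the unique Frobenius-minimal left inverse. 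Part (1) then follows by a two-sided squeeze. On the one hand, for any left inverse $A$ one cannot have $\max_i\|AM_{i}T^{*}_{\textbf{W},\mathbf{w}}\|_{F}<c$, since this would force $\|A\|_{F}^{2}=\sum_i w_i^{-2}\|AM_iT^*_{\textbf{W},\mathbf{w}}\|_F^2<c^{2}\sum_i w_{i}^{-2}=\|A_0\|_{F}^{2}$, contradicting minimality; as $A_0$ attains the value $c$, the minimum in (\ref{E P W w set duals}) equals $c$ and is attained by $A_0$. On the other hand, if some $A$ also attains it, then $\|AM_iT^*_{\textbf{W},\mathbf{w}}\|_F\le c$ for all $i$ gives $\|A\|_{F}^{2}\le c^{2}\sum_i w_{i}^{-2}=\|A_0\|_{F}^{2}$, so $\|A\|_F=\|A_0\|_F$ and hence $A=A_0$ by uniqueness. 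Thus the set (\ref{E P W w set duals}) is $\{A_0\}$.

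For part (2) I would exploit that the worst-case error depends on a dual only through the left inverse it induces. For any $Q$-dual $(\mathbf{V},\mathbf{v})$ the operator $A:=T_{\mathbf{V},\mathbf{v}}Q$ lies in $\mathfrak{L}_{T^{*}_{\textbf{W},\mathbf{w}}}$, and $\|T_{\mathbf{V},\mathbf{v}}QM_{J}T^{*}_{\textbf{W},\mathbf{w}}\|_{F}=\|AM_{J}T^{*}_{\textbf{W},\mathbf{w}}\|_{F}$ depends only on $A$; by Theorem~\ref{T V,v dual fusion frame sii Vi=ApiWj} every element of $\mathfrak{L}_{T^{*}_{\textbf{W},\mathbf{w}}}$ arises this way. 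For $r=1$ this identifies $\|e(1,\dots)\|_{\infty}$ with $\max_i\|AM_{i}T^{*}_{\textbf{W},\mathbf{w}}\|_{F}$, so by part (1) a dual is $(1,\infty)$-loss optimal if and only if $T_{\mathbf{V},\mathbf{v}}Q=A_0$. Since for such duals the entire error vector (for every $r$) depends only on $A=A_0$, the hierarchical refinement is vacuous and $\mathcal{D}_{r}^{(\infty)}(\textbf{W},\mathbf{w})=\mathcal{D}_{1}^{(\infty)}(\textbf{W},\mathbf{w})$ for all $r$. Finally, if such an optimal dual is component preserving, then $T_{\mathbf{V},\mathbf{v}}Q=A_0=S_{\textbf{W},\mathbf{w}}^{-1}T_{\textbf{W},\mathbf{w}}$ together with Theorem~\ref{T V,v dual fusion frame sii Vi=ApiWj} and Lemma~\ref{L W V Vtild Q Qtild comp preserv} forces $V_{i}=A_0M_{i}\mathcal{W}=S_{\textbf{W},\mathbf{w}}^{-1}W_{i}$ and $Q=Q_{A_0,\mathbf{v}}$; that is, the dual is the canonical one with weights $\mathbf{v}$, for an arbitrary vector $\mathbf{v}$.

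The main obstacle is the passage from the worst-case ($\ell^{\infty}$) optimization to the Frobenius ($\ell^{2}$) extremal property: the max-functional is only convex, not strictly convex, so optimality and uniqueness of $A_0$ are not automatic from the geometry alone. The equalization hypothesis $w_{i}^{2}\|S_{\textbf{W},\mathbf{w}}^{-1}\pi_{W_{i}}\|_{F}=c$ is exactly what makes the two-sided squeeze close, converting the Frobenius-minimality of $A_0$ into both optimality and uniqueness for the worst-case error; without it, $A_0$ need not be $(1,\infty)$-loss optimal.
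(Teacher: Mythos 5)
Your proposal is correct and follows essentially the same route as the paper: both proofs rest on the observation that the hypothesis makes $\|A_{0}M_{i}T^{*}_{\textbf{W},\mathbf{w}}\|_{F}=c$ constant in $i$ for $A_{0}=S_{\textbf{W},\mathbf{w}}^{-1}T_{\textbf{W},\mathbf{w}}$, together with the vanishing cross term $\mathrm{tr}[(A-A_{0})T^{*}_{\textbf{W},\mathbf{w}}S_{\textbf{W},\mathbf{w}}^{-1}]=0$ for any other left inverse $A$, which yields the Pythagorean squeeze forcing $A=A_{0}$; you merely package this as global Frobenius minimality of $A_{0}$ where the paper sums the per-index expansions. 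Part (2) is then deduced exactly as in the paper, via Theorem~\ref{T V,v dual fusion frame sii Vi=ApiWj}, Lemma~\ref{L W V Vtild Q Qtild comp preserv} and the hierarchical definition.
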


\begin{proof}
Let $A_{0}=S_{\textbf{W},\mathbf{w}}^{-1}T_{\textbf{W},\mathbf{w}}$.

(1) By Proposition~\ref{P W w set duals}, there exists $A \in
\mathfrak{L}_{T_{\textbf{W},\mathbf{w}}^{*}}$ such that

\centerline{$\max_{1 \leq i \leq
m}\|AM_{i}T_{\textbf{W},\mathbf{w}}^{*}\|_{F} = \min_{B \in
\mathfrak{L}_{T_{\textbf{W},\mathbf{w}}^{*}}}\max_{1 \leq i \leq
m}\|BM_{i}T_{\textbf{W},\mathbf{w}}^{*}\|_{F}.$}

So,

\centerline{$\max_{1 \leq i \leq
m}\|AM_{i}T_{\textbf{W},\mathbf{w}}^{*}\|_{F} \leq \max_{1 \leq i
\leq m}\|A_{0}M_{i}T_{\textbf{W},\mathbf{w}}^{*}\|_{F},$}

\noindent and then, by hypothesis and (\ref{E T mse 1a}),
\begin{equation}\label{E T 1 infty opt dual c 1}
\|AM_{i}\|_{F} \leq \|A_{0}M_{i}\|_{F},~~\text{for each $i \in \{1,
\ldots, m\}$.}
\end{equation}
Using
$\|AM_{i}\|_{F}^{2}=\|A_{0}M_{i}\|_{F}^{2}+\|(A-A_{0})M_{i}\|_{F}^{2}+2\text{Re}(\text{tr}[(A-A_{0})M_{i}
T_{\textbf{W},\mathbf{w}}^{*}S_{\textbf{W},\mathbf{w}}^{-1}]),$ by
(\ref{E T 1 infty opt dual c 1}),
\begin{equation}\label{E T 1 infty opt dual c 2}
\|(A-A_{0})M_{i}\|_{F}^{2} +
2\text{Re}(\text{tr}[(A-A_{0})M_{i}T_{\textbf{W},\mathbf{w}}^{*}S_{\textbf{W},\mathbf{w}}^{-1}])
\leq 0.
\end{equation}
Since
$AT_{\textbf{W},\mathbf{w}}^{*}=A_{0}T_{\textbf{W},\mathbf{w}}^{*}=I_{\mathcal{H}}$,

\centerline{$\sum_{i=1}^{m}\text{tr}[(A-A_{0})M_{i}T_{\textbf{W},\mathbf{w}}^{*}S_{\textbf{W},\mathbf{w}}^{-1}]=\text{tr}[(A-A_{0})T_{\textbf{W},\mathbf{w}}^{*}S_{\textbf{W},\mathbf{w}}^{-1}]=0.$}

\noindent Thus, by (\ref{E T 1 infty opt dual c 2}),
$\sum_{i=1}^{m}\|(A-A_{0})M_{i}\|_{F}^{2} \leq 0$, and consequently,
$AM_{i}=A_{0}M_{i}$ for every $i \in \{1, \ldots, m\}$, or
equivalently, $A=A_{0}$.

(2) It follows from part (1), Theorem~\ref{T V,v dual fusion frame
sii Vi=ApiWj}, Lemma~\ref{L W V Vtild Q Qtild comp preserv} and the
inductive definition of $(r,\infty)$-loss optimal dual fusion
frames.\end{proof}

\begin{cor}\label{C T 1 infty opt dual c 1}
Let $(\textbf{W},\mathbf{w})$ be a Parseval fusion frame for
$\mathcal{H}$. If $w_{i}^{2}\dim(W_{i})^{1/2}=c$ for each $i = 1,
\ldots, m$, then
\begin{enumerate}
  \item $T_{\textbf{W},\mathbf{w}}$ is the unique
  element of the set (\ref{E P W w set
duals}).
  \item The only $(r,\infty)$-loss optimal component preserving dual fusion frames of $(\textbf{W},\mathbf{w})$ are the canonical ones $(\textbf{W},\mathbf{v})$ with arbitrary vector of weights $\textbf{v}$.
\end{enumerate}
\end{cor}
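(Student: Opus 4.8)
The plan is to derive this directly from Theorem~\ref{T ODFFE p infty canonico} by specializing to the Parseval case, so that essentially no new work beyond one elementary norm computation is required. First I would observe that since $(\textbf{W},\mathbf{w})$ is a Parseval fusion frame, $S_{\textbf{W},\mathbf{w}}=I_{\mathcal{H}}$, and hence $S_{\textbf{W},\mathbf{w}}^{-1}=I_{\mathcal{H}}$ as well. This is exactly the structural simplification that will collapse the general statements of Theorem~\ref{T ODFFE p infty canonico} onto the present one.

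The one computation to carry out is the identification of the Frobenius norm appearing in the hypothesis of Theorem~\ref{T ODFFE p infty canonico}. Since $S_{\textbf{W},\mathbf{w}}^{-1}=I_{\mathcal{H}}$, we have $\|S_{\textbf{W},\mathbf{w}}^{-1}\pi_{W_{i}}\|_{F}=\|\pi_{W_{i}}\|_{F}$. The operator $\pi_{W_{i}}$ is a self-adjoint idempotent, so $\|\pi_{W_{i}}\|_{F}^{2}=\mathrm{tr}(\pi_{W_{i}}^{*}\pi_{W_{i}})=\mathrm{tr}(\pi_{W_{i}})=\dim(W_{i})$, giving $\|\pi_{W_{i}}\|_{F}=\dim(W_{i})^{1/2}$. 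Thus the hypothesis $w_{i}^{2}\dim(W_{i})^{1/2}=c$ is precisely the hypothesis $w_{i}^{2}\|S_{\textbf{W},\mathbf{w}}^{-1}\pi_{W_{i}}\|_{F}=c$ required by Theorem~\ref{T ODFFE p infty canonico}.

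With the hypothesis verified, I would simply invoke Theorem~\ref{T ODFFE p infty canonico} and read off its two conclusions under the substitution $S_{\textbf{W},\mathbf{w}}^{-1}=I_{\mathcal{H}}$. Part (1) of that theorem gives that $S_{\textbf{W},\mathbf{w}}^{-1}T_{\textbf{W},\mathbf{w}}=T_{\textbf{W},\mathbf{w}}$ is the unique element of the set (\ref{E P W w set duals}), which is conclusion (1) here. Part (2) gives that the only $(r,\infty)$-loss optimal component preserving dual fusion frames are the canonical ones $(S_{\textbf{W},\mathbf{w}}^{-1}\textbf{W},\mathbf{v})=(\textbf{W},\mathbf{v})$ for arbitrary weights $\mathbf{v}$, which is conclusion (2).

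I do not expect any genuine obstacle: the corollary is a direct specialization of the theorem, and the only point requiring any verification is the elementary fact that the Frobenius norm of an orthogonal projection equals the square root of the dimension of its range, which follows at once from idempotency and self-adjointness. The main thing to be careful about is confirming that the Parseval assumption is used in two distinct ways—once to match the hypotheses (turning $\|\pi_{W_{i}}\|_{F}$ into $\dim(W_{i})^{1/2}$) and once to simplify the conclusions (turning $S_{\textbf{W},\mathbf{w}}^{-1}\textbf{W}$ into $\textbf{W}$)—so that both halves of the statement are genuinely recovered.
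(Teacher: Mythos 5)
Your proposal is correct and follows essentially the same route as the paper: both verify the hypothesis of Theorem~\ref{T ODFFE p infty canonico} via $w_{i}^{2}\|S_{\textbf{W},\mathbf{w}}^{-1}\pi_{W_{i}}\|_{F}=w_{i}^{2}\|\pi_{W_{i}}\|_{F}=w_{i}^{2}\dim(W_{i})^{1/2}=c$ using $S_{\textbf{W},\mathbf{w}}=I_{\mathcal{H}}$, and then read off the conclusions with $S_{\textbf{W},\mathbf{w}}^{-1}=I_{\mathcal{H}}$. No gaps.
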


\begin{proof} By hypothesis, $\|T_{\textbf{W},\mathbf{w}}M_{i}T_{\textbf{W},\mathbf{w}}^{*}\|_{F}=w_{i}^{2}\|\pi_{W_{i}}\|_{F}=w_{i}^{2}\dim(W_{i})^{1/2}=c$ for
each $i = 1, \ldots, m$, so the proof follows from the previous
corollary.\end{proof}

\begin{rem}
By Corollary~\ref{C T 1 infty opt dual c 1}, the only
$(r,\infty)$-loss optimal component preserving dual fusion frames of
a uniform equi-dimensional Parseval fusion frame
$(\textbf{W},\mathbf{w})$ are the canonical ones
$(\textbf{W},\mathbf{v})$ with arbitrary vector of weights
$\textbf{v}$.
\end{rem}

In Example~\ref{Ej MasseyRuizStojanoff 1}, we are going to see a
fusion frame that has a unique (up to weights) loss optimal
$Q$-component preserving dual fusion frame with the same subspaces
as the canonical dual, but with $Q \neq
Q_{S_{\textbf{W},\mathbf{w}}^{-1}T_{\textbf{W},\mathbf{w}},\mathbf{v}}$
and therefore it gives another reconstruction formula.

\subsection{Optimal dual fusion frame systems for erasures of local frame
vectors}\label{S ODFFE vectors}

We will analyze now the situation where some local frame vectors are
lost. In this case we consider $J_{i} \subseteq L_{i},$ for each $i
= 1, \ldots, m$, $\mathcal{J}=(J_{1}, \ldots, J_{m})$ and
$|\mathcal{J}|=\sum_{i=1}^{m}|J_{i}|$. Let $M_{\mathcal{J}} \in
L(\oplus_{i=1}^{m}\mathbb{F}^{|L_{i}|},
\oplus_{i=1}^{m}\mathbb{F}^{|L_{i}|})$ be the self-adjoint operator
given by $M_{\mathcal{J}}((x_i^l)_{l \in
L_{i}})_{i=1}^{m}=((\chi_{J_{i}}(l)x_i^l)_{l \in L_{i}})_{i=1}^{m}$.

Fix $r \in \{1, \ldots, \sum_{i=1}^{m}|L_{i}|\}$. Let
$\mathcal{P}^{\times_{i=1}^{m}L_{i}}_{r}=\{\mathcal{J} :
|\mathcal{J}| = r\}$. By similar considerations to those in
section~\ref{S ODFFE subspaces}, we consider the vector error

\centerline{$e(r,(\mathbf{W},\mathbf{w},\mathcal{F}),(\mathbf{V},\mathbf{v},\mathcal{G}))=(\|T_{\mathbf{V},\mathbf{v}}C_{\mathcal{G}}M_{\mathcal{J}}C_{\mathcal{F}}^{*}T_{\textbf{W},\mathbf{w}}^{*}\|_{F})_{\mathcal{J}
\in \mathcal{P}^{\times_{i=1}^{m}L_{i}}_{r}}$,}

\noindent and define inductively

\centerline{$e_{1}^{(p)}(\textbf{W},\mathbf{w},\mathcal{F})=\text{inf}\{\|e(1,(\mathbf{W},\mathbf{w},\mathcal{F}),(\mathbf{V},\mathbf{v},\mathcal{G}))\|_{p}:
(\mathbf{V},\mathbf{v},\mathcal{G}) ~\text{is a dual fusion frame
system of}~ (\mathbf{W},\mathbf{w},\mathcal{F})\},$}

$\mathcal{D}_{1}^{(p)}(\textbf{W},\mathbf{w},\mathcal{F})$ as the
set of dual fusion frame systems
$(\mathbf{V},\mathbf{v},\mathcal{G})$ of
$(\textbf{W},\mathbf{w},\mathcal{F})$ with

\centerline{$\|e(1,(\textbf{W},\mathbf{w},\mathcal{F}),(\mathbf{V},\mathbf{v},\mathcal{G}))\|_{p}=e_{1}^{(p)}(\textbf{W},
\mathbf{w},\mathcal{F}),$}

\centerline{$e_{r}^{(p)}(\textbf{W},\mathbf{w},\mathcal{F})=\text{inf}\{
\|e(r,(\textbf{W},\mathbf{w},\mathcal{F}),(\mathbf{V},\mathbf{v},\mathcal{G}))\|_{p}:(\mathbf{V},\mathbf{v},\mathcal{G})
\in \mathcal{D}_{r-1}^{(p)}(\textbf{W},\mathbf{w},\mathcal{F})\},$}

\centerline{$\mathcal{D}_{r}^{(p)}(\textbf{W},\mathbf{w},\mathcal{F})=\{(\mathbf{V},\mathbf{v},\mathcal{G})
\in
\mathcal{D}_{r-1}^{(p)}(\textbf{W},\mathbf{w},\mathcal{F}):\|e(r,(\textbf{W},\mathbf{w},\mathcal{F}),(\mathbf{V},\mathbf{v},\mathcal{G}))\|_{p}=e_{r}^{(p)}(\textbf{W},\mathbf{w},\mathcal{F})\},$}

\noindent in case each
$\mathcal{D}_{r}^{(p)}(\textbf{W},\mathbf{w},\mathcal{F})$, called
the set of $(r,p)$-loss optimal dual fusion frames for
$(\textbf{W},\mathbf{w},\mathcal{F})$, is non empty.

In the following we consider the cases $p=2$ and $p=\infty$
obtaining results that are analogous to the ones viewed in
Section~\ref{S ODFFE subspaces}. For this, let $\mathcal{J} \in
\mathcal{P}^{\times_{i=1}^{m}L_{i}}_{1}$ where $J_{l}=\emptyset$ if
$l \neq i$ and $J_{i}=\{j\}$. Then
$M_{\mathcal{J}}C_{\mathcal{F}}^{*}T_{\textbf{W},\mathbf{w}}^{*}T_{\textbf{W},\mathbf{w}}C_{\mathcal{F}}M_{\mathcal{J}}^{*}=w_{i}^{2}\|f_i^j\|^{2}M_{\mathcal{J}}$,
and if $A \in
L(\bigoplus_{i=1}^{m}\mathbb{F}^{|L_{i}|},\mathcal{H})$,
\begin{equation}\label{E T mse local 1}
\|AM_{\mathcal{J}}C_{\mathcal{F}}^{*}T_{\textbf{W},\mathbf{w}}^{*}\|_{F}^{2}
=w_{i}^{2}\|f_i^j\|^{2}\|AM_{\mathcal{J}}\|_{F}^{2}.
\end{equation}
\subsubsection{The mean square error}\label{S ODFFE vectors p = 2}

Considering $p=2$ we obtain the mean square error,

\centerline{$\|e(r,(\mathbf{W},\mathbf{w},\mathcal{F}),(\mathbf{V},\mathbf{v},\mathcal{G}))\|_{2}=(\sum_{\mathcal{J}
\in
\mathcal{P}^{\times_{i=1}^{m}L_{i}}_{r}}\|T_{\mathbf{V},\mathbf{v}}C_{\mathcal{G}}M_{\mathcal{J}}C_{\mathcal{F}}^{*}T_{\textbf{W},\mathbf{w}}^{*}\|_{F}^{2})^{1/2}.$}

The following theorem about optimal $(r,2)$-loss optimal dual fusion
frame systems is similar to Theorem~\ref{T ODFF p=2}.

\begin{thm}\label{T ODFFE vector p=2}
Let $(\textbf{W},\mathbf{w},\mathcal{F})$ be a fusion frame system
for $\mathcal{H}$ where each element in $\mathcal{F}$ has norm equal
to $1$. Let
$\mathcal{G}_{c}=\{\frac{1}{w_{i}v_{i}}S_{\mathcal{F}}^{-1}\mathcal{F}_{i}\}_{i=1}^{m}$
and
$\mathcal{G}_{c,i}=\frac{1}{w_{i}v_{i}}S_{\mathcal{F}}^{-1}\mathcal{F}_{i}$.
Then
\begin{enumerate}
  \item $(S_{\mathcal{F}}^{-1}\textbf{W},\mathbf{v},\mathcal{G}_{c})$ is a $(r,2)$-loss optimal
component preserving dual fusion frame system for
$(\textbf{W},\mathbf{w},\mathcal{F})$.
  \item If $(\mathbf{V},\mathbf{v},\mathcal{G})$ is
a $(r,2)$-loss optimal dual fusion frame system of
$(\textbf{W},\mathbf{w},\mathcal{F})$, then
$T_{\mathbf{V},\mathbf{v}}C_{\mathcal{G}}=T_{S_{\mathcal{F}}^{-1}\textbf{W},\mathbf{v}}C_{\mathcal{G}_{c}}$.
  \item If
$(\mathbf{V},\mathbf{v},\mathcal{G})$ is a $(r,2)$-loss optimal
component preserving dual fusion frame system of
$(\textbf{W},\mathbf{w},\mathcal{F})$ then
$\mathbf{V}=S_{\mathcal{F}}^{-1}\textbf{W}$ and
$\mathcal{G}=\mathcal{G}_{c}$.
\end{enumerate}
\end{thm}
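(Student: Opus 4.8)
The plan is to mirror the proof of Theorem~\ref{T ODFF p=2}, transferring everything to the level of the associated frames via the identities $T_{{\bf w}\mathcal{F}}=T_{\textbf{W},{\bf w}}C_{\mathcal{F}}$ and $T_{{\bf v}\mathcal{G}}=T_{\mathbf{V},\mathbf{v}}C_{\mathcal{G}}$, so that by Theorem~\ref{T dual fusion frame systems} a dual fusion frame system is exactly a pair of dual frames and the loss of the local vector $f_i^j$ is the loss of a single coordinate. First I would check that $(S_{\mathcal{F}}^{-1}\textbf{W},\mathbf{v},\mathcal{G}_{c})$ is a genuine component preserving dual fusion frame system. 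Here $S_{\mathcal{F}}=\sum_{i=1}^{m}\sum_{l\in L_i}\langle\cdot,f_i^l\rangle f_i^l$ is selfadjoint, positive and invertible, each $\mathcal{G}_{c,i}$ spans $S_{\mathcal{F}}^{-1}W_i$, and since the vectors of ${\bf v}\mathcal{G}_{c}$ are $\frac{1}{w_i}S_{\mathcal{F}}^{-1}f_i^l$ a direct computation gives $T_{{\bf v}\mathcal{G}_{c}}T_{{\bf w}\mathcal{F}}^{*}=S_{\mathcal{F}}^{-1}S_{\mathcal{F}}=I_{\mathcal{H}}$, so ${\bf v}\mathcal{G}_{c}$ is a dual frame of ${\bf w}\mathcal{F}$; that $C_{\mathcal{G}_{c}}C_{\mathcal{F}}^{*}$ is component preserving follows because its $i$-th block sends $f_i\in W_i$ to $\frac{1}{w_iv_i}S_{\mathcal{F}}^{-1}S_{\mathcal{F}_i}f_i$, where $S_{\mathcal{F}_i}=T_{\mathcal{F}_i}T_{\mathcal{F}_i}^{*}$ satisfies $S_{\mathcal{F}_i}W_i=W_i$. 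I will write $A=T_{\mathbf{V},\mathbf{v}}C_{\mathcal{G}}$ and $A_{0}=T_{S_{\mathcal{F}}^{-1}\textbf{W},\mathbf{v}}C_{\mathcal{G}_{c}}$.

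The heart of the argument is a Pythagorean decomposition of the mean square error at $r=1$. Since each $\|f_i^j\|=1$, formula~(\ref{E T mse local 1}) reduces the error to $\sum_{i=1}^{m}\sum_{j\in L_i}w_i^{2}\|AM_{\mathcal{J}}\|_{F}^{2}$, where $\mathcal{J}$ denotes the one-element index singling out $f_i^j$. Expanding $A=A_{0}+(A-A_{0})$ and using $M_{\mathcal{J}}M_{\mathcal{J}}^{*}=M_{\mathcal{J}}$ gives, for each such $\mathcal{J}$,
\[
\|AM_{\mathcal{J}}\|_{F}^{2}=\|A_{0}M_{\mathcal{J}}\|_{F}^{2}+\|(A-A_{0})M_{\mathcal{J}}\|_{F}^{2}+2\,\text{Re}\,\text{tr}[(A-A_{0})M_{\mathcal{J}}A_{0}^{*}].
\]
The key identity is $A_{0}M_{\mathcal{J}}=w_i^{-2}S_{\mathcal{F}}^{-1}T_{{\bf w}\mathcal{F}}M_{\mathcal{J}}$, since both sides send the $(i,j)$-coordinate to $w_i^{-1}S_{\mathcal{F}}^{-1}f_i^j$; hence $M_{\mathcal{J}}A_{0}^{*}=w_i^{-2}M_{\mathcal{J}}T_{{\bf w}\mathcal{F}}^{*}S_{\mathcal{F}}^{-1}$. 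Multiplying the cross term by $w_i^{2}$ cancels the factor $w_i^{-2}$, and summing over all $(i,j)$ I would use $\sum_{i,j}M_{\mathcal{J}}=I$ together with $(A-A_{0})T_{{\bf w}\mathcal{F}}^{*}=I_{\mathcal{H}}-I_{\mathcal{H}}=0$ (both give duals) to conclude that the total cross term vanishes. This yields
\[
\|e(1,(\textbf{W},\mathbf{w},\mathcal{F}),(\mathbf{V},\mathbf{v},\mathcal{G}))\|_{2}^{2}=\|e(1,(\textbf{W},\mathbf{w},\mathcal{F}),(S_{\mathcal{F}}^{-1}\textbf{W},\mathbf{v},\mathcal{G}_{c}))\|_{2}^{2}+\sum_{i=1}^{m}\sum_{j\in L_i}w_i^{2}\|(A-A_{0})M_{\mathcal{J}}\|_{F}^{2}.
\]

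From here the conclusions follow. The last sum is nonnegative, which proves $(S_{\mathcal{F}}^{-1}\textbf{W},\mathbf{v},\mathcal{G}_{c})\in\mathcal{D}_{1}^{(2)}(\textbf{W},\mathbf{w},\mathcal{F})$, giving part (1) together with the component preserving property checked above. If $(\mathbf{V},\mathbf{v},\mathcal{G})\in\mathcal{D}_{1}^{(2)}$ the sum must vanish, and since $w_i>0$ this forces $(A-A_{0})M_{\mathcal{J}}=0$ for every one-element $\mathcal{J}$, hence $A=A\sum_{i,j}M_{\mathcal{J}}=A_{0}$, i.e. $T_{\mathbf{V},\mathbf{v}}C_{\mathcal{G}}=T_{S_{\mathcal{F}}^{-1}\textbf{W},\mathbf{v}}C_{\mathcal{G}_{c}}$, which is part (2). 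For part (3) I would compare the two synthesis operators $T_{{\bf v}\mathcal{G}}=T_{{\bf v}\mathcal{G}_{c}}$ on the standard basis to get $v_ig_i^l=v_ig_{c,i}^l$, whence $\mathcal{G}=\mathcal{G}_{c}$ and $\mathbf{V}=S_{\mathcal{F}}^{-1}\textbf{W}$ (alternatively invoking Lemma~\ref{L W V Vtild Q Qtild comp preserv} via the block-diagonality of $C_{\mathcal{G}}C_{\mathcal{F}}^{*}$). Finally, the passage from $r=1$ to arbitrary $r$ is exactly as in Theorem~\ref{T ODFF p=2}: the error at every level $r$ depends only on $A=T_{\mathbf{V},\mathbf{v}}C_{\mathcal{G}}$, and every element of $\mathcal{D}_{1}^{(2)}$ shares the operator $A_{0}$, so all higher-order errors coincide and the optimal sets stabilize.

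The step I expect to be the main obstacle is the cross-term cancellation, because of the two-level indexing: one must correctly identify the single-coordinate operator $M_{\mathcal{J}}$ on $\oplus_{i}\mathbb{F}^{|L_i|}$, verify the weight-balancing identity $A_{0}M_{\mathcal{J}}=w_i^{-2}S_{\mathcal{F}}^{-1}T_{{\bf w}\mathcal{F}}M_{\mathcal{J}}$ so that the $w_i^{2}$ coming from (\ref{E T mse local 1}) is absorbed, and only then invoke completeness $\sum_{i,j}M_{\mathcal{J}}=I$ and the duality relation. A secondary point needing care is the component preserving verification for $\mathcal{G}_{c}$, which rests on $S_{\mathcal{F}_i}$ mapping $W_i$ onto itself.
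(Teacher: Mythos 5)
Your proposal is correct and follows essentially the same route as the paper: establish that $S_{\mathcal{F}}$ is selfadjoint, positive and invertible, verify that $(S_{\mathcal{F}}^{-1}\textbf{W},\mathbf{v},\mathcal{G}_{c})$ is a component preserving dual fusion frame system, and then run the Pythagorean decomposition of the mean square error with vanishing cross term exactly as in Theorem~\ref{T ODFF p=2}, using (\ref{E T mse local 1}) and the key identity $A_{0}M_{\mathcal{J}}=w_i^{-2}S_{\mathcal{F}}^{-1}T_{{\bf w}\mathcal{F}}M_{\mathcal{J}}$. The paper leaves that second half as ``similar to Theorem~\ref{T ODFF p=2}''; you have simply written it out, and your details check.
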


\begin{proof} By Remark~\ref{R wF marco sii WwF fusion frame system}, $\mathbf{w}\mathcal{F}$ is
a frame for $\mathcal{H}$, so $\mathcal{F}$ is also a frame for
$\mathcal{H}$ and $S_{\mathcal{F}}$ is a selfadjoint positive
invertible operator.

Note that
$T_{\mathcal{G}_{c,i}}=\frac{1}{w_{i}v_{i}}S_{\mathcal{F}}^{-1}T_{\mathcal{F}_{i}}$,
so $R(T_{\mathcal{G}_{c,i}}^{*}) \subseteq
R(T_{\mathcal{F}_{i}}^{*})$ and, by Remark~\ref{R Q Mi sum Wj subset
Mi sum Vj}, $C_{\mathcal{G}_{c}}C_{\mathcal{F}}^{*}$ is component
preserving.

We also have
$T_{S_{\mathcal{F}}^{-1}\textbf{W},\mathbf{v}}C_{\mathcal{G}_{c}}C_{\mathcal{F}}^{*}T_{\textbf{W},\mathbf{w}}^{*}=\sum_{i=1}^{m}S_{\mathcal{F}}^{-1}T_{\mathcal{F}_{i}}T_{\mathcal{F}_{i}}^{*}=S_{\mathcal{F}}^{-1}S_{\mathcal{F}}=I_{\mathcal{H}},$
thus $(S_{\mathcal{F}}^{-1}\textbf{W},\mathbf{v}\mathcal{G}_{c})$ is
a component preserving dual fusion frame system of
$(\textbf{W},\mathbf{w},\mathcal{F})$.

Using (\ref{E T mse local 1}), the rest of the proof is similar to
that of Theorem~\ref{T ODFF p=2}.\end{proof}

\subsubsection{The worst case error}\label{S ODFFE vectors p = infty}

For $p=\infty$ we have the worst-case error,

\centerline{$\|e(r,(\textbf{W},\mathbf{w},
\mathcal{F}),(\mathbf{V},\mathbf{v},
\mathcal{G}))\|_{\infty}=\max_{\mathcal{J} \in
\mathcal{P}^{\times_{i=1}^{m}L_{i}}_{r}}\|T_{\mathbf{V},\mathbf{v}}C_{\mathcal{G}}M_{\mathcal{J}}C_{\mathcal{F}}^{*}T_{\textbf{W},\mathbf{w}}^{*}\|_{F}.$}

The following results are analogous to Proposition~\ref{P W w set
duals}, Theorem~\ref{T ODFFE p infty canonico} and Corollary~\ref{C
T 1 infty opt dual c 1}, respectively. Their proofs follow similar
lines, so we omit them.

\begin{prop}\label{P W w set duals vector}
Let $(\textbf{W},\mathbf{w},\mathcal{F})$ be a fusion frame system
for $\mathcal{H}$ where each element in $\mathcal{F}$ is no null.
Then \begin{equation}\label{E P W w set duals vector}\{A \in
\mathfrak{L}_{C_{\mathcal{F}}^{*}T_{\textbf{W},\mathbf{w}}^{*}}:
\max_{\mathcal{J} \in
\mathcal{P}^{\times_{i=1}^{m}L_{i}}_{1}}\|AM_{\mathcal{J}}C_{\mathcal{F}}^{*}T_{\textbf{W},\mathbf{w}}^{*}\|_{F}
= \min_{B \in
\mathfrak{L}_{T_{\textbf{W},\mathbf{w}}^{*}}}\max_{\mathcal{J} \in
\mathcal{P}^{\times_{i=1}^{m}L_{i}}_{1}}\|BM_{\mathcal{J}}C_{\mathcal{F}}^{*}T_{\textbf{W},\mathbf{w}}^{*}\|_{F}\}\end{equation}is
non empty, compact and convex.
\end{prop}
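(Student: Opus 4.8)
The plan is to follow the proof of Proposition~\ref{P W w set duals} essentially verbatim, with the fusion frame $(\textbf{W},\mathbf{w})$ replaced by the associated frame $\mathbf{w}\mathcal{F}$, whose analysis operator is $C_{\mathcal{F}}^{*}T_{\textbf{W},\mathbf{w}}^{*}$ (recall $T_{\mathbf{w}\mathcal{F}}=T_{\textbf{W},\mathbf{w}}C_{\mathcal{F}}$). First I would introduce the candidate map $\|\cdot\|_{\textbf{W},\mathbf{w},\mathcal{F}}$ on $L(\bigoplus_{i=1}^{m}\mathbb{F}^{|L_{i}|},\mathcal{H})$ defined by $\|A\|_{\textbf{W},\mathbf{w},\mathcal{F}}=\max_{\mathcal{J}\in\mathcal{P}^{\times_{i=1}^{m}L_{i}}_{1}}\|AM_{\mathcal{J}}C_{\mathcal{F}}^{*}T_{\textbf{W},\mathbf{w}}^{*}\|_{F}$ and verify that it is a norm. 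Homogeneity and the triangle inequality are immediate, so the only axiom requiring work is definiteness.

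I expect definiteness to be the crux, and it is precisely here that the hypothesis that no element of $\mathcal{F}$ is null enters. Fix a singleton $\mathcal{J}$ with $J_{i}=\{j\}$ and $J_{l}=\emptyset$ for $l\neq i$. The identity $M_{\mathcal{J}}C_{\mathcal{F}}^{*}T_{\textbf{W},\mathbf{w}}^{*}T_{\textbf{W},\mathbf{w}}C_{\mathcal{F}}M_{\mathcal{J}}^{*}=w_{i}^{2}\|f_i^j\|^{2}M_{\mathcal{J}}$ recorded just before (\ref{E T mse local 1}) shows, since $w_{i}^{2}\|f_i^j\|^{2}>0$, that $R(M_{\mathcal{J}})\subseteq R(M_{\mathcal{J}}C_{\mathcal{F}}^{*}T_{\textbf{W},\mathbf{w}}^{*})$; as $M_{\mathcal{J}}$ is the last factor applied, the reverse inclusion is automatic, so $R(M_{\mathcal{J}}C_{\mathcal{F}}^{*}T_{\textbf{W},\mathbf{w}}^{*})=R(M_{\mathcal{J}})=\text{span}\{e_i^j\}$. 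Consequently, if $\|A\|_{\textbf{W},\mathbf{w},\mathcal{F}}=0$ then $A$ vanishes on $\text{span}\{e_i^j\}$ for every pair $(i,j)$; since $\{e_i^j\}$ is the standard basis of $\bigoplus_{i=1}^{m}\mathbb{F}^{|L_{i}|}$, this forces $A=0$. This is the one place where the fusion frame system structure is genuinely used; everything else is generic.

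With the norm in hand, the remainder is formally identical to Proposition~\ref{P W w set duals}. By Remark~\ref{R wF marco sii WwF fusion frame system}, $\mathbf{w}\mathcal{F}$ is a frame for $\mathcal{H}$, so $C_{\mathcal{F}}^{*}T_{\textbf{W},\mathbf{w}}^{*}$ is injective and $\mathfrak{L}_{C_{\mathcal{F}}^{*}T_{\textbf{W},\mathbf{w}}^{*}}$ is non empty. This set is closed and convex in $L(\bigoplus_{i=1}^{m}\mathbb{F}^{|L_{i}|},\mathcal{H})$ under the usual norm, hence, by equivalence of norms in finite dimension, closed and convex also under $\|\cdot\|_{\textbf{W},\mathbf{w},\mathcal{F}}$. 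Choosing any nonzero $B_{0}$ in it, the sublevel set $\{A\in\mathfrak{L}_{C_{\mathcal{F}}^{*}T_{\textbf{W},\mathbf{w}}^{*}}:\|A\|_{\textbf{W},\mathbf{w},\mathcal{F}}\leq\|B_{0}\|_{\textbf{W},\mathbf{w},\mathcal{F}}\}$ is non empty and compact, and the continuous map $\|\cdot\|_{\textbf{W},\mathbf{w},\mathcal{F}}$ attains its minimum there; this gives non-emptiness and compactness of the set (\ref{E P W w set duals vector}). Convexity then follows from the convexity of $\mathfrak{L}_{C_{\mathcal{F}}^{*}T_{\textbf{W},\mathbf{w}}^{*}}$ together with the convexity of the map $\|\cdot\|_{\textbf{W},\mathbf{w},\mathcal{F}}$, exactly as in the proof of Proposition~\ref{P W w set duals}.
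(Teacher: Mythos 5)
Your proposal is correct and is exactly the argument the paper intends: the paper omits this proof, stating only that it ``follows similar lines'' to Proposition~\ref{P W w set duals}, and your adaptation supplies the one genuinely new ingredient, namely that definiteness of the auxiliary norm follows from (\ref{E T mse local 1}) together with the hypothesis that no $f_i^j$ is null. (Note also that you correctly read the minimum in (\ref{E P W w set duals vector}) as ranging over $\mathfrak{L}_{C_{\mathcal{F}}^{*}T_{\textbf{W},\mathbf{w}}^{*}}$, which is what the displayed expression requires.)
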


\begin{thm}\label{T ODFFE vector p=infty}
Let $(\textbf{W},\mathbf{w},\mathcal{F})$ be a fusion frame system
for $\mathcal{H}$. If
$w_{i}\|f_{i}^j\|~\|S_{\mathbf{w}\mathcal{F}}^{-1}\pi_{\text{span}\{f_{i}^j\}}\|_{F}=c$
for each $i = 1, \ldots, m$, then
\begin{enumerate}
  \item
  $T_{S_{\mathbf{w}\mathcal{F}}^{-1}W,\mathbf{w}}C_{S_{\mathbf{w}\mathcal{F}}^{-1}\mathcal{F}}$
  is the unique element of the set (\ref{E P W w set duals vector}).
  \item $(S_{\mathbf{w}\mathcal{F}}^{-1}\textbf{W},\mathbf{w},S_{\mathbf{w}\mathcal{F}}^{-1}\mathcal{F})$ is the unique $(r,\infty)$-loss optimal component preserving dual fusion frame system of $(\textbf{W},\mathbf{w},\mathcal{F}).$
\end{enumerate}
\end{thm}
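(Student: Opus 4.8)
The plan is to transcribe the argument of Theorem~\ref{T ODFFE p infty canonico} (and Corollary~\ref{C T 1 infty opt dual c 1}), replacing the fusion frame $(\textbf{W},\mathbf{w})$ by the associated frame $\mathbf{w}\mathcal{F}$: the role of the analysis operator $T_{\textbf{W},\mathbf{w}}^{*}$ is now played by $C_{\mathcal{F}}^{*}T_{\textbf{W},\mathbf{w}}^{*}=T_{\mathbf{w}\mathcal{F}}^{*}$, single subspace erasures $M_{i}$ are replaced by the one-index erasures $M_{\mathcal{J}}$ with $\mathcal{J}\in\mathcal{P}^{\times_{i=1}^{m}L_{i}}_{1}$, and identity (\ref{E T mse local 1}) takes the place of (\ref{E T mse 1a}). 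Writing $S:=S_{\mathbf{w}\mathcal{F}}$, I would set $A_{0}:=S^{-1}T_{\mathbf{w}\mathcal{F}}=T_{S^{-1}\textbf{W},\mathbf{w}}C_{S^{-1}\mathcal{F}}$, the synthesis operator of the canonical dual frame of $\mathbf{w}\mathcal{F}$; since $A_{0}T_{\mathbf{w}\mathcal{F}}^{*}=I_{\mathcal{H}}$ we have $A_{0}\in\mathfrak{L}_{C_{\mathcal{F}}^{*}T_{\textbf{W},\mathbf{w}}^{*}}$, so $A_{0}$ is a legitimate competitor in the minimization defining the set (\ref{E P W w set duals vector}).

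For part (1) I would first invoke Proposition~\ref{P W w set duals vector} to produce some $A$ lying in the set (\ref{E P W w set duals vector}), i.e.\ a left inverse attaining the minimal worst-case value. The decisive use of the hypothesis is to equalize the canonical errors across indices: a direct computation shows that $A_{0}M_{\mathcal{J}}C_{\mathcal{F}}^{*}T_{\textbf{W},\mathbf{w}}^{*}$ is a scalar multiple of $S^{-1}\pi_{\text{span}\{f_{i}^{j}\}}$ for the one-index $\mathcal{J}$ supported at $(i,j)$, so the constancy condition is exactly what makes $\|A_{0}M_{\mathcal{J}}C_{\mathcal{F}}^{*}T_{\textbf{W},\mathbf{w}}^{*}\|_{F}$ independent of $\mathcal{J}$. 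Consequently the minimality inequality $\max_{\mathcal{J}}\|AM_{\mathcal{J}}C_{\mathcal{F}}^{*}T_{\textbf{W},\mathbf{w}}^{*}\|_{F}\le\max_{\mathcal{J}}\|A_{0}M_{\mathcal{J}}C_{\mathcal{F}}^{*}T_{\textbf{W},\mathbf{w}}^{*}\|_{F}$ upgrades to the coordinatewise bound $\|AM_{\mathcal{J}}C_{\mathcal{F}}^{*}T_{\textbf{W},\mathbf{w}}^{*}\|_{F}\le\|A_{0}M_{\mathcal{J}}C_{\mathcal{F}}^{*}T_{\textbf{W},\mathbf{w}}^{*}\|_{F}$ for every one-index $\mathcal{J}$, and then by (\ref{E T mse local 1}) to $\|AM_{\mathcal{J}}\|_{F}\le\|A_{0}M_{\mathcal{J}}\|_{F}$.

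Next I would run the orthogonality argument of Theorem~\ref{T ODFFE p infty canonico}. Expanding
\[
\|AM_{\mathcal{J}}\|_{F}^{2}=\|A_{0}M_{\mathcal{J}}\|_{F}^{2}+\|(A-A_{0})M_{\mathcal{J}}\|_{F}^{2}+2\,\text{Re}\,\text{tr}[A_{0}^{*}(A-A_{0})M_{\mathcal{J}}],
\]
summing over all one-index $\mathcal{J}$ so that $\sum_{\mathcal{J}}M_{\mathcal{J}}=I_{\oplus_{i}\mathbb{F}^{|L_{i}|}}$, and using $A_{0}^{*}=T_{\mathbf{w}\mathcal{F}}^{*}S^{-1}$, the cross terms collapse to $2\,\text{Re}\,\text{tr}[S^{-1}(A-A_{0})T_{\mathbf{w}\mathcal{F}}^{*}]$, which vanishes because $A$ and $A_{0}$ are both left inverses of $T_{\mathbf{w}\mathcal{F}}^{*}=C_{\mathcal{F}}^{*}T_{\textbf{W},\mathbf{w}}^{*}$, whence $(A-A_{0})T_{\mathbf{w}\mathcal{F}}^{*}=0$. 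Combining this with the coordinatewise inequalities gives $\sum_{\mathcal{J}}\|(A-A_{0})M_{\mathcal{J}}\|_{F}^{2}\le0$, forcing $AM_{\mathcal{J}}=A_{0}M_{\mathcal{J}}$ for every one-index $\mathcal{J}$ and hence $A=A_{0}$. This identifies $T_{S^{-1}\textbf{W},\mathbf{w}}C_{S^{-1}\mathcal{F}}$ as the unique element of the set, proving (1).

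Finally, for part (2) I would translate back from left inverses to dual fusion frame systems. By Definition~\ref{D fusion frame system dual} and the characterization in Theorem~\ref{T V,v dual fusion frame sii Vi=ApiWj}, a component preserving dual fusion frame system $(\mathbf{V},\mathbf{v},\mathcal{G})$ corresponds to the left inverse $A=T_{\mathbf{V},\mathbf{v}}C_{\mathcal{G}}$ of $C_{\mathcal{F}}^{*}T_{\textbf{W},\mathbf{w}}^{*}$, and its single-erasure worst-case error is precisely $\max_{\mathcal{J}}\|AM_{\mathcal{J}}C_{\mathcal{F}}^{*}T_{\textbf{W},\mathbf{w}}^{*}\|_{F}$; so $(1,\infty)$-optimality selects exactly $A=A_{0}$, and Lemma~\ref{L W V Vtild Q Qtild comp preserv} together with the component preserving property pins down $\mathbf{V}=S^{-1}\textbf{W}$ and $\mathcal{G}=S^{-1}\mathcal{F}$. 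The hierarchical definition of $(r,\infty)$-optimality then propagates this conclusion from $r=1$ to every $r$. The main obstacle is the step in the second paragraph: only the constancy hypothesis converts minimality of the \emph{maximum} over erasures into a per-index comparison with the canonical dual, and one must verify that the summed cross term genuinely vanishes, which is exactly where the common left-inverse identity $(A-A_{0})T_{\mathbf{w}\mathcal{F}}^{*}=0$ is indispensable.
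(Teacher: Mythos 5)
Your proof is correct and is precisely the argument the paper intends: the authors omit the proof of Theorem~\ref{T ODFFE vector p=infty}, saying only that it follows the same lines as Theorem~\ref{T ODFFE p infty canonico}, and your transcription --- with $T_{\mathbf{w}\mathcal{F}}^{*}=C_{\mathcal{F}}^{*}T_{\textbf{W},\mathbf{w}}^{*}$ in place of $T_{\textbf{W},\mathbf{w}}^{*}$, the one-index erasures $M_{\mathcal{J}}$ in place of $M_{i}$, identity (\ref{E T mse local 1}) in place of (\ref{E T mse 1a}), the identification $A_{0}=S_{\mathbf{w}\mathcal{F}}^{-1}T_{\mathbf{w}\mathcal{F}}=T_{S_{\mathbf{w}\mathcal{F}}^{-1}\textbf{W},\mathbf{w}}C_{S_{\mathbf{w}\mathcal{F}}^{-1}\mathcal{F}}$, and the vanishing of the summed cross term via the common left-inverse identity --- is exactly that argument. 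One bookkeeping point worth verifying against the statement: a direct computation gives $\|A_{0}M_{\mathcal{J}}C_{\mathcal{F}}^{*}T_{\textbf{W},\mathbf{w}}^{*}\|_{F}=(w_{i}\|f_{i}^{j}\|)^{2}\,\|S_{\mathbf{w}\mathcal{F}}^{-1}\pi_{\text{span}\{f_{i}^{j}\}}\|_{F}$, so the exact analogue of the hypothesis of Theorem~\ref{T ODFFE p infty canonico} carries the \emph{square} of the effective weight $w_{i}\|f_{i}^{j}\|$ rather than the first power appearing in the theorem as stated; this affects only the normalization in the hypothesis, not your proof strategy.
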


\begin{cor}\label{C 2 T ODFFE vector p=infty} Let $(\textbf{W},\mathbf{w},\mathcal{F})$ be a
fusion frame for $\mathcal{H}$. If $\mathbf{w}\mathcal{F}$ is
Parseval and $w_{i}\|f_i^j\|=c$ for each $i = 1, \ldots, m$, then
\begin{enumerate}
  \item $T_{\textbf{W},\mathbf{w}}C_{\mathcal{F}}$
  is the unique element of the set (\ref{E P W w set duals vector}).
  \item $(\textbf{W},\mathbf{v},\mathcal{F})$ is the unique $(r,\infty)$-loss optimal component preserving dual fusion frame system of $(\textbf{W},\mathbf{w},\mathcal{F})$.
\end{enumerate}
\end{cor}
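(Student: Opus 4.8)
The plan is to obtain this corollary as a direct specialization of Theorem~\ref{T ODFFE vector p=infty}: the only real work is to verify that the Parseval hypothesis collapses the constancy condition of that theorem onto the simpler one assumed here. First I would record that ``$\mathbf{w}\mathcal{F}$ is Parseval'' means exactly $S_{\mathbf{w}\mathcal{F}}=I_{\mathcal{H}}$, hence $S_{\mathbf{w}\mathcal{F}}^{-1}=I_{\mathcal{H}}$.

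Next I would evaluate the factor $\|S_{\mathbf{w}\mathcal{F}}^{-1}\pi_{\text{span}\{f_i^j\}}\|_{F}$ appearing in the hypothesis of Theorem~\ref{T ODFFE vector p=infty}. Since $\mathbf{w}\mathcal{F}$ spans $\mathcal{H}$ it is not the zero family, so $c=w_i\|f_i^j\|>0$, whence every $f_i^j\neq 0$ and $\pi_{\text{span}\{f_i^j\}}$ is the orthogonal projection onto a one-dimensional subspace. For any orthogonal projection $P$ onto a subspace of dimension $k$ one has $\|P\|_{F}=(\text{tr}(P^{*}P))^{1/2}=(\text{tr}\,P)^{1/2}=k^{1/2}$, so here $\|\pi_{\text{span}\{f_i^j\}}\|_{F}=1$. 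Together with $S_{\mathbf{w}\mathcal{F}}^{-1}=I_{\mathcal{H}}$ this gives $w_i\|f_i^j\|\,\|S_{\mathbf{w}\mathcal{F}}^{-1}\pi_{\text{span}\{f_i^j\}}\|_{F}=w_i\|f_i^j\|=c$, so the hypothesis of Theorem~\ref{T ODFFE vector p=infty} is satisfied.

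It then remains only to read off its two conclusions after substituting $S_{\mathbf{w}\mathcal{F}}=I_{\mathcal{H}}$. Part (1) identifies $T_{S_{\mathbf{w}\mathcal{F}}^{-1}W,\mathbf{w}}C_{S_{\mathbf{w}\mathcal{F}}^{-1}\mathcal{F}}=T_{\textbf{W},\mathbf{w}}C_{\mathcal{F}}$ as the unique element of the set (\ref{E P W w set duals vector}), which is assertion (1). Part (2) identifies the unique $(r,\infty)$-loss optimal component preserving dual fusion frame system as $(S_{\mathbf{w}\mathcal{F}}^{-1}\textbf{W},\mathbf{w},S_{\mathbf{w}\mathcal{F}}^{-1}\mathcal{F})=(\textbf{W},\mathbf{w},\mathcal{F})$, which is the system $(\textbf{W},\mathbf{v},\mathcal{F})$ of assertion (2).

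Given Theorem~\ref{T ODFFE vector p=infty}, I expect no genuine obstacle: this is the local-frame-erasure analogue of Corollary~\ref{C T 1 infty opt dual c 1}, and the Parseval assumption is precisely what trivializes the weighting $\|S_{\mathbf{w}\mathcal{F}}^{-1}\pi_{\text{span}\{f_i^j\}}\|_{F}$ that separates the general theorem from this case. The two points deserving a line of care are the rank-one identity $\|\pi_{\text{span}\{f_i^j\}}\|_{F}=1$ (for which one must first exclude $f_i^j=0$), and, cosmetically, reconciling the weight vector $\mathbf{w}$ of the dual produced by the theorem with the vector $\mathbf{v}$ written in the statement; here optimality forces the associated dual frame $\mathbf{v}\mathcal{F}$ to equal the Parseval frame $\mathbf{w}\mathcal{F}$, so $\mathbf{v}=\mathbf{w}$ and the two descriptions coincide.
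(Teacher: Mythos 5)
Your proposal is correct and matches the route the paper intends: the paper omits this proof, stating only that it follows the same lines as Corollary~\ref{C T 1 infty opt dual c 1}, i.e.\ one checks that the Parseval hypothesis (so $S_{\mathbf{w}\mathcal{F}}=I_{\mathcal{H}}$) together with $\|\pi_{\text{span}\{f_i^j\}}\|_{F}=1$ reduces the constancy condition of Theorem~\ref{T ODFFE vector p=infty} to $w_i\|f_i^j\|=c$, and then reads off the two conclusions. Your extra remarks on excluding $f_i^j=0$ and on reconciling the weights $\mathbf{v}$ with $\mathbf{w}$ are sensible points of care that the paper glosses over.
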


A fusion frame system that has a unique $(r,2)$-loss optimal
$Q$-component preserving dual fusion frame system with the same
subspaces as the canonical dual, but with $Q \neq
Q_{S_{\textbf{W},\mathbf{w}}^{-1}T_{\textbf{W},\mathbf{w}},\mathbf{v}}$,
will be given in Example~\ref{Ej MasseyRuizStojanoff 1}. In this
case the optimal dual provides another reconstruction formula than
the canonical dual.

In \cite{Lopez-Han (2010)} and \cite{Leng-Han (2011)}, the spectral
norm is used instead of the Frobenius norm in the definition of the
worst-case error. We prefer the Frobenius norm in accordance with
the study made in subsection 5.1. Both worst-case errors coincide
for $r=1.$ So, by the used hierarchical definition and the relation
between dual fusion frame systems and dual frames, provided by
Theorem~\ref{T dual fusion frame systems}, we conclude that we can
obtain examples for Theorem~\ref{C 1 T ODFFE vector p=infty} and
Corollary~\ref{C 2 T ODFFE vector p=infty} from the examples for the
corresponding results in \cite{Lopez-Han (2010)} and \cite{Leng-Han
(2011)} (see Example~\ref{Leng-Han 3.5}).

\section{Examples}\label{SEjemplos}

\begin{example}
In \cite{Gavruta (2007)} a Bessel fusion sequence  $({\bf V},{\bf
v})$  is called an alternate dual of the fusion frame
$(\textbf{W},{\bf w})$ if for all $f \in \mathcal{H}$
\begin{equation}\label{E formula reconstruccion Gavruta}
f= \sum_{i=1}^{m}w_{i}v_{i}\pi_{V_{i}}S_{\textbf{W},{\bf
w}}^{-1}\pi_{W_i}(f).
\end{equation}
If $A: \mathcal{W} \rightarrow \mathcal{H}$, $A(f_i)_{i=1}^{m}=
\sum_{i=1}^{m}v_{i}\pi_{V_{i}}S_{\textbf{W},{\bf w}}^{-1}f_i$, then
by (\ref{E formula reconstruccion Gavruta}) $A \in
\mathfrak{L}_{T_{\textbf{W},{\bf w}}^{*}}$. We have
$\widetilde{V}_{i}:=AM_i\mathcal{W}=\pi_{V_i} S_{\textbf{W},{\bf
w}}^{-1} W_i$ and $Q_{A,\mathbf{v}} : \mathcal{W} \rightarrow
\widetilde{\mathcal{V}}$, $Q_{A,\mathbf{v}}(f_i)_{i=1}^m=(\pi_{V_i}
S_{\textbf{W},{\bf w}}^{-1} f_{i})_{i=1}^{m}.$ By Theorem~\ref{T V,v
dual fusion frame sii Vi=ApiWj}, $(\widetilde{\mathbf{V}},
\mathbf{v})$ is a $Q_{A,\mathbf{v}}$-component preserving dual
fusion frame of $(\textbf{W}, \mathbf{w})$. By Lemma~\ref{L QA comp
preserv}, (\ref{E formula reconstruccion Gavruta}) can be written
using this dual fusion frame as $f=T_{\widetilde{\mathbf{V}},{\bf
v}}Q_{A,\mathbf{v}}T_{\textbf{W},{\bf w}}^{*}f$.
\end{example}

\begin{example}\label{Ej MasseyRuizStojanoff 2} Let
$\mathcal{H}=\mathbb{C}^{4}$, $w_{1} > 0$, $w_{2} > 0$,
$W_{1}=\{(x_{1}, x_{2}, 0 , 0) : x_{1}, x_{2} \in \mathbb{C}\}$ and
$W_{2}=\{(0, x_{2}, x_{3} , -x_{2}) : x_{2}, x_{3} \in
\mathbb{C}\}$. Then $(\mathbf{W},\mathbf{w})$ is a
$2$-equi-dimensional Riesz fusion basis for $\mathbb{C}^{4}$ and so
its unique component preserving duals are the canonical ones. Set
$w_{1}=w_{2}=1$.

{\bf (a)} Although $(\mathbf{W},1)$ is a Riesz fusion basis, it is
possible to construct a dual fusion frame which is different from
the canonical ones. For this, let

\centerline{$\mathcal{F}_{1}=\{(1,0,0,0),(0,1,0,0), (1,0,0,0)\},~~
\mathcal{F}_{2}=\{(0,1,0,-1),(0,0,1,0),(0,0,1,0)\},$}

\centerline{$\mathcal{G}_{1}=\{(\frac{1}{2},\frac{1}{2},-\frac{1}{2},0),(0,1,0,1),(\frac{1}{2},-\frac{1}{2},\frac{1}{2},0)\},~~
\mathcal{G}_{2}=\{(0,0,0,-1),(\frac{1}{2},-\frac{1}{2},\frac{1}{2},0),(-\frac{1}{2},\frac{1}{2},\frac{1}{2},0)\}.$}

\noindent Then $\{\mathbf{W},\mathbf{w},\mathcal{F}\}$ is a fusion
frame system for $\mathbb{C}^{4}$ and $\mathcal{G}$ is a dual frame
of $\mathcal{F}$ that is not the canonical one.

Let $V_i=\text{span}\mathcal{G}_{i},\,i=1,2.$ By Theorem~\ref{T dual
fusion frame systems}, $(\mathbf{V},1,\mathcal{G})$ is a dual fusion
frame system of $(\mathbf{W},1,\mathcal{F})$.

Note that $C_{\mathcal{G}}C_{\mathcal{F}}^*: \mathcal{W} \rightarrow
\mathcal{V}, C_{\mathcal{G}}C_{\mathcal{F}}^*((x_{1}, x_{2}, 0,
0),(0, y_{2}, y_{3}, -y_{2}))=((x_{1}, x_{2}, 0, x_2),(0, 0, y_{3},
-2y_{2}))$ is block-diagonal but not component preserving.

Since $\dim(V_i)=3>\dim(W_i)=2,\,i=1,2,$ $(\mathbf{V},1)$ gives a
dual fusion frame which is different from the canonical one,
moreover, it is not a Riesz fusion basis.

{\bf (b)} Consider $T_{1}: \mathbb{C}^{2} \rightarrow
\mathbb{C}^{4}$, $T_{1}(x_{1}, x_{2})=(x_{1}, x_{2}, 0, 0)$ and
$T_{2}: \mathbb{C}^{2} \rightarrow \mathbb{C}^{4}$, $T_{2}(x_{1},
x_{2})=(0, \frac{1}{\sqrt{2}}x_{2}, x_{1},
-\frac{1}{\sqrt{2}}x_{2})$. Then $(T_{1}, T_{2})$ is a projective
Riesz $(2,2,\mathbb{C}^{4})$-reconstruction system associated with
$(\textbf{W},{\bf w})$ with a unique dual, the canonical one, that
is not projective (see \cite{Massey-Ruiz-Stojanoff (2012b)}, Example
5.4).
\end{example}

\begin{example}\label{Ej MasseyRuizStojanoff 1}
Let $\mathcal{H}=\mathbb{F}^{3}$, $W_{1}=\{(1, 0, 0)\}^{\perp}$,
$W_{2}=\{(0, 1, 0)\}^{\perp}$, $w_{1} > 0$ and $w_{2}
> 0$. Then $(\mathbf{W},\mathbf{w})$ is a $2$-equi-dimensional fusion frame for $\mathbb{F}^{3}$
with $S_{\textbf{W},{\bf w}}^{-1}(x_{1}, x_{2},
x_{3})=(\frac{x_1}{w_{2}^{2}},\frac{x_2}{w_{1}^{2}},
\frac{x_3}{w_{1}^{2}+w_{2}^{2}}).$

{\bf (a)} Any $A \in \mathfrak{L}_{T_{\textbf{W},\mathbf{w}}^{*}}$
is given by
\begin{align*}
A&\paren{(0,x_{2},x_{3}),(y_{1}, 0, y_{3})}=\\\notag
&=\paren{r_{11}x_{3}+\frac{y_{1}}{w_{2}}+r_{12}y_{3},
\frac{x_{2}}{w_{1}}+r_{21}x_{3}+r_{22}y_{3},
(\frac{w_{1}}{w_{1}^{2}+w_{2}^{2}}+r_{31})x_{3}+(\frac{w_{2}}{w_{1}^{2}+w_{2}^{2}}+r_{32})y_{3}},
\end{align*}
where $r_{i1}w_{1}+r_{i2}w_{2}=0$ for $i=1,2,3$.

By Theorem~\ref{T V,v dual fusion frame sii Vi=ApiWj}, any
$Q_{A,\mathbf{v}}$-component preserving dual fusion frame has
arbitrary weights $v_{1}, v_{2}$ and subspaces

\centerline{$V_{1}=AM_{1}\mathcal{W}=\text{span}\{(0,1,0),(r_{11},r_{21},\frac{w_{1}}{w_{1}^{2}+w_{2}^{2}}+r_{31})\},$}
\centerline{$V_{2}=AM_{2}\mathcal{W}=\text{span}\{(1,0,0),(r_{12},r_{22},\frac{w_{2}}{w_{1}^{2}+w_{2}^{2}}+r_{32})\}.$}

We have
$Q_{S_{\textbf{W},\mathbf{w}}^{-1}T_{\textbf{W},\mathbf{w}},\mathbf{v}}((0,x_{2},x_{3}),(y_{1},
0, y_{3}))=(\frac{1}{v_{1}}(0, \frac{x_{2}}{w_{1}},
\frac{w_{1}x_{3}}{w_{1}^{2}+w_{2}^{2}}),\frac{1}{v_{2}}(\frac{y_{1}}{w_{2}},
0, \frac{w_{2}y_{3}}{w_{1}^{2}+w_{2}^{2}})).$ We note that
$\set{\paren{W_{1},w_{1}},
\paren{W_{2},w_{2}}}$ is not Parseval and has the same subspaces as its canonical
dual.

{\bf (b)} Let $Q: \mathcal{W} \rightarrow \mathcal{V},$
$Q((0,x_{2},x_{3}),(y_{1}, 0, y_{3}))=(\frac{1}{v_{1}}(0,
\frac{x_{2}}{w_{1}},
\frac{x_{3}}{2w_{1}}),\frac{1}{v_{2}}(\frac{y_{1}}{w_{2}}, 0,
\frac{y_{3}}{2w_{2}}))$.

If $S_{D}$ and $Q_D$ are as in Theorem~\ref{T ODFF p=2}, then
$S_{D}^{-1}\textbf{W}=\textbf{W}$ and $Q_{D}=Q$. By Theorem~\ref{T
ODFF p=2}(1), $(\textbf{W},\mathbf{v})$ is a $(r,2)$-loss optimal
$Q$-component preserving dual fusion frame of
$(\textbf{W},\mathbf{w})$.

The unique element in the set (\ref{E P W w set duals}) is given by
$A((0,x_{2},x_{3}),(y_{1}, 0,
y_{3}))=(\frac{y_{1}}{w_{2}},\frac{x_{2}}{w_{1}},\frac{1}{2}(\frac{x_{3}}{w_{1}}+\frac{y_{3}}{w_{2}}))$.
In this case, $V_{1}=W_{1}$, $V_{2}=W_{2}$ and $Q_{A,\mathbf{v}}=Q$.
Therefore, $(\textbf{W},\mathbf{v})$ is the unique (up to weights)
$(r,\infty)$-loss optimal $Q$-component preserving dual fusion frame
of $(\textbf{W},\mathbf{w}).$

{\bf (c)} Let
$\mathcal{F}_{1}=\{(0,0,1),(0,\frac{3}{2},-\frac{1}{2}),-(0,\frac{\sqrt{3}}{2},\frac{1}{2})\}$
and $
\mathcal{F}_{2}=\{(0,0,1),(\frac{\sqrt{3}}{2},0,-\frac{1}{2}),-(\frac{\sqrt{3}}{2},0,\frac{1}{2})\}.$
\noindent Then $\mathcal{F}_{i}$ is a unit norm $\frac{3}{2}$-tight
frame for $W_i$, $i=1,2,$ $(\mathbf{W},\mathbf{w},\mathcal{F})$ is a
fusion frame system for $\mathbb{F}^{4}$ and $\mathcal{F}$ is a
frame for $\mathbb{F}^{4}$ with
$S_{\mathcal{F}}(x_{1},x_{2},x_{3})=(\frac{3}{2}x_{1},\frac{3}{2}x_{2},3x_{3})$.

Let $\mathcal{G}_{c}=\{\{g_i^l\}_{l=1}^3\}_{i=1}^{2}$ be as in
Theorem~\ref{T ODFFE vector p=2}, i.e.

\centerline{$g_{1}^{1}=\frac{1}{w_{1}v_{1}}(0,0,\frac{1}{3}),~~g_{1}^{2}=\frac{1}{w_{1}v_{1}}(0,\frac{\sqrt{3}}{3},\frac{-1}{6}),~~g_{1}^{3}=\frac{1}{w_{1}v_{1}}(0,\frac{-\sqrt{3}}{3},\frac{-1}{6}),$}

\centerline{$g_{2}^{1}=\frac{1}{w_{2}v_{2}}(0,0,\frac{1}{3}),~~g_{2}^{2}=
\frac{1}{w_{2}v_{2}}(\frac{\sqrt{3}}{3},0,\frac{-1}{6}),~~g_{2}^{3}=\frac{1}{w_{2}v_{2}}(\frac{-\sqrt{3}}{3},0,\frac{-1}{6}).$}
By Theorem~\ref{T ODFFE vector p=2},
$(\textbf{W},\mathbf{v},\mathcal{G}_{c})$ is the unique (up to
weights) $(r,2)$-loss optimal component preserving dual fusion frame
system of $(\textbf{W},\mathbf{w},\mathcal{F})$ and
$C_{\mathcal{G}_{c}}C_{\mathcal{F}}^{*}=Q$.

If $w_{1} \neq w_{2}$, then $Q \neq
Q_{S_{\textbf{W},\mathbf{w}}^{-1}T_{\textbf{W},\mathbf{w}},\mathbf{v}}$
and $T_{\textbf{W},\mathbf{v}}Q \neq
T_{\textbf{W},\mathbf{v}}Q_{S_{\textbf{W},\mathbf{w}}^{-1}T_{\textbf{W},\mathbf{w}},\mathbf{v}}$.
So, by the analysis done in (b) and (c), $(\textbf{W},\mathbf{w})$
has loss optimal duals with the same subspaces as the canonical ones
but that provide different reconstruction formulas than the
canonical duals.

{\bf (d)} Let $T_{1}: \mathbb{F}^{2} \rightarrow \mathbb{F}^{3}$,
$T_{1}(x_{1}, x_{2})=(0,x_{1}, x_{2})$, and $T_{2}: \mathbb{F}^{2}
\rightarrow \mathbb{F}^{3}$, $T_{2}(x_{1}, x_{2})=(x_{1},0,x_{2})$.
Then $(T_{1},T_{2})$ is a projective
$(2,2,\mathbb{F}^{3})$-reconstruction system associated with
$(\mathbf{W},\mathbf{w})$. This reconstruction system is considered
in \cite[Example 5.1]{Massey-Ruiz-Stojanoff (2012b)}, where it is
shown that if $\mathbb{F}=\mathbb{C}$, $(T_{1},T_{2})$ has
projective duals but the canonical dual is not projective, and if
$\mathbb{F}=\mathbb{R}$, $(T_{1},T_{2})$ has not projective
duals.\end{example}

\begin{example}\label{Leng-Han 3.5}
Let $\mathcal{H}=\mathbb{F}^{3},$
$\mathcal{F}_1=\{(1,0,0),(0,1,0),(-2,1,1)\},$ $W_1=\text{span}\,
\mathcal{F}_1,$ $\mathcal{F}_2=\{(1,-2,-1)\},$ $W_2=\text{span}\,
\mathcal{F}_2,$  $\mathcal{G}_1=\{(\frac{22-\sqrt{74}}{20},
\frac{2-\sqrt{74}}{20},\frac{3}{2}),
(\frac{2-\sqrt{74}}{20},\frac{22-\sqrt{74}}{20},-\frac{3}{2}),(\frac{2-\sqrt{74}}{20},
\frac{2-\sqrt{74}}{20},\frac{1}{2})\},$ $V_1=\text{span}\,
\mathcal{G}_1$ $\mathcal{G}_2=\{
(\frac{2-\sqrt{74}}{20},\frac{2-\sqrt{74}}{20},-\frac{1}{2})\}$ and
$V_2=\text{span}\, \mathcal{G}_2.$

Then $(\textbf{W},1,\mathcal{F})$ is a fusion frame system for
$\mathcal{H}$, $V_1 = S_{\textbf{W},1}^{-1}W_1$ and $V_2\neq
S_{\textbf{W},1}^{-1}W_2.$ By \cite[Example~3.5]{Leng-Han (2011)},
$(\mathbf{V},1,\mathcal{G})$ is the $(r,\infty)$-loss optimal dual
fusion frame system of $(\textbf{W},1,\mathcal{F})$, and it is
different from the canonical ones.
\end{example}

\section*{Acknowledgement}
S. B. Heineken acknowledges the support of Grant UBACyT 2011-2014
(UBA). The research of P. M. Morillas has been partially supported
by Grant P-317902 (UNSL). Both thank the valuable comments and
suggestions of the referee that significantly improved the
presentation of the paper.



\end{document}